\newtheorem{thm}{Theorem}[section]
\newtheorem{cor}[thm]{Corollary}
\newtheorem{lem}[thm]{Lemma}
\newtheorem{prop}[thm]{Proposition}
\numberwithin{equation}{section}
\theoremstyle{definition}
\newtheorem{definition}[thm]{Definition}
\newtheorem{rem}[thm]{Remark}
\begin{document}
 \title[H\"ormander type Fourier multiplier theorem and Nikolskii inequality]{H\"ormander type Fourier multiplier theorem and Nikolskii inequality on quantum tori, and applications}

\author[Michael Ruzhansky]{Michael Ruzhansky}
\address{
 Michael Ruzhansky:
  \endgraf
 Department of Mathematics: Analysis, Logic and Discrete Mathematics,
  \endgraf
 Ghent University, Ghent,
 \endgraf
  Belgium 
  \endgraf
  and 
 \endgraf
 School of Mathematical Sciences, Queen Mary University of London, London,
 \endgraf
 UK  
 \endgraf
  {\it E-mail address} {\rm michael.ruzhansky@ugent.be}
  }

\author[Serikbol Shaimardan]{Serikbol Shaimardan}
\address{
  Serikbol Shaimardan:
  \endgraf
  Institute of Mathematics and Mathematical Modeling, 050010, Almaty, 
  \endgraf
  Kazakhstan 
  \endgraf
  and 
  \endgraf
Department of Mathematics: Analysis, Logic and Discrete Mathematics
  \endgraf
 Ghent University, Ghent,
 \endgraf
  Belgium
  \endgraf
  {\it E-mail address} {\rm shaimardan.serik@gmail.com} 
  }

 \author[Kanat Tulenov]{Kanat Tulenov}
\address{
  Kanat Tulenov:
  \endgraf
  Institute of Mathematics and Mathematical Modeling, 050010, Almaty, 
  \endgraf
  Kazakhstan 
  \endgraf
  and 
  \endgraf
Department of Mathematics: Analysis, Logic and Discrete Mathematics
  \endgraf
 Ghent University, Ghent,
 \endgraf
  Belgium
  \endgraf
  {\it E-mail address} {\rm tulenov@math.kz} 
  }

\date{}

\begin{abstract}
In this paper, we study H\"ormander  
type Fourier multiplier theorem and the Nikolskii inequality on quantum tori. On the way to obtain these results, we also prove some classical inequalities such as the Paley, Hausdorff-Young-Paley, Hardy-Littlewood, and Logarithmic Sobolev inequalities on quantum tori. As applications we establish embedding theorems between Sobolev, Besov spaces as well as embeddings between Besov and Wiener and Beurling spaces on quantum tori. We also analyse $\beta$-versions of Wiener and Beurling spaces and their embeddings, and interpolation properties of all these spaces on quantum tori.  As an application of the study, we also derive a version of the Nash inequality, and the time decay for solutions of a heat type equation. 
\end{abstract}

\subjclass[2010]{46L51, 46L52, 58B34, 47L25, 11M55, 46E35, 42B05, 43A50, 42A16, 42B15}

\keywords{Quantum tori, H\"ormander  
Fourier multiplier, Nikolskii inequality, Hausdorff-Young inequality, Logarithmic Sobolev inequality, Besov space, Wiener and Beurling spaces.}

\maketitle

\tableofcontents
{\section{Introduction}}
Quantum tori $\mathbb{T}^d_{\theta}$ is one of the main objectives in Alain Connes' non-commutative geometry  (see 
 \cite{Connes1980} and \cite{CM2014}, \cite[Chapter 12]{green-book}) and in non-commutative harmonic analysis \cite{LMcSZ, Ponge1, R, Rieffel, Spera, XXY}. This is also known as a non-commutative torus and irrational rotation algebra. In the last decades, there has been substantial body of work on generalising the methods of classical harmonic analysis on ordinary torus to quantum tori (or non-commutative torus). In particular, on a non-commutative torus (defined in terms of an arbitrary antisymmetric real $d\times d$ matrix $\theta$), it is possible to define analogues of many of the tools of harmonic analysis, such as differential operators and function spaces \cite{LMcSZ, MSQ2019, STZ, XXY}. 

The study of Fourier multipliers has been attracting attention of many researchers for more than a century. This is related to numerous applications in harmonic analysis, in particular, in PDEs. One of the important questions in this field is to understand the $L^p\to L^q$ boundedness of Fourier multipliers. In the case $p = q,$ remarkable results were obtained by Marcinkiewicz \cite{KM}, \cite{M} and Mikhlin \cite{Mikh}, which were improved later by Stein \cite{Stein} and H\"ormander \cite{Hor}. 

For the case $L^p\to L^q,$ there is another classical result of H\"ormander available in \cite[Theorem 1.11]{Hor}.
If $\mathcal{S}(\mathbb{R}^d)$ is the Schwartz class and $T_g$ is the Fourier
multiplier with the symbol $g\in \mathcal{S}(\mathbb{R}^d),$ which is given by the multiplication on the Fourier transform side
$$\widehat{T_g}(f)=g \cdot \widehat{f}, \quad f\in\mathcal{S}(\mathbb{R}^d),$$
where $\widehat{f}$ is the Fourier transform
$$\widehat{f}(t)=(2\pi)^{-d/2}\int_{\mathbb{R}^d}f(s)e^{-i(t,s)}ds, \quad t\in \mathbb{R}^d,$$ then the classical H\"ormander's result asserts as follows:
\begin{thm}\cite[Theorem 1.11]{Hor}\label{Hor-thm}
    If $1 < p\leq  2\leq q < \infty,$ then the Fourier multiplier $T_g$ with the symbol $g:\mathbb{R}^{d}\to\mathbb{C}$ such that
    $$\sup\limits_{\lambda>0}\lambda\left(\int\limits_{x\in\mathbb{R}^d \atop  |g(x)|\geq\lambda}dx\right)^{\frac{1}{p}-\frac{1}{q}}<\infty$$
     extends to a bounded map from $L^p(\mathbb{R}^{d})$ to $L^q(\mathbb{R}^{d})$ and we have
\begin{equation}\label{main est}\|T_g\|_{L^p(\mathbb{R}^{d})\to L^q(\mathbb{R}^{d})}\lesssim \sup\limits_{\lambda>0}\lambda\left(\int\limits_{x\in\mathbb{R}^d \atop  |g(x)|\geq\lambda}dx\right)^{\frac{1}{p}-\frac{1}{q}}. \end{equation}
\end{thm}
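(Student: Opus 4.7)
The right-hand side of \eqref{main est} is, up to a multiplicative constant, the weak Lebesgue quasinorm $\|g\|_{L^{r,\infty}(\mathbb{R}^d)}$, where $\tfrac{1}{r}=\tfrac{1}{p}-\tfrac{1}{q}$. The assertion thus reduces to the bilinear estimate
\[
\|T_g f\|_{L^q(\mathbb{R}^d)}\lesssim \|g\|_{L^{r,\infty}(\mathbb{R}^d)}\,\|f\|_{L^p(\mathbb{R}^d)}.
\]
The assumption $1<p\le 2\le q<\infty$ forces $q'\le 2\le p'$, which is precisely the ``self-dual'' regime in which the Hausdorff-Young inequality is applicable on both sides of the Fourier transform; this structural feature is what makes the proof go through.

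The plan is to reduce, via a double application of Hausdorff-Young, to a weighted Fourier-side estimate of Paley type. First, since $T_g f=\mathcal{F}^{-1}(g\widehat f)$ and $q'\le 2$, Hausdorff-Young for the inverse Fourier transform gives $\|T_g f\|_{L^q}\lesssim \|g\widehat f\|_{L^{q'}}$. Next, I would invoke the Hausdorff-Young-Paley inequality (which the paper develops by real interpolation between Paley's inequality and Hausdorff-Young): for $1<p\le b\le p'<\infty$ and any positive weight $\varphi$,
\[
\Bigl(\int_{\mathbb{R}^d}|\widehat f(\xi)|^{b}\,\varphi(\xi)^{\,b(1/b-1/p')}\,d\xi\Bigr)^{1/b}\lesssim \|\varphi\|_{L^{1,\infty}}^{1/b-1/p'}\,\|f\|_{L^p}.
\]
Taking $b=q'$ and $\varphi=|g|^{r}$, a short exponent computation using $\tfrac{1}{q'}-\tfrac{1}{p'}=\tfrac{1}{p}-\tfrac{1}{q}=\tfrac{1}{r}$ turns the left-hand side into $\|g\widehat f\|_{L^{q'}}$, while $\||g|^r\|_{L^{1,\infty}}^{1/r}=\|g\|_{L^{r,\infty}}$. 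Chaining with the previous step yields the theorem.

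The main obstacle is establishing the Hausdorff-Young-Paley inequality with a weak-type (Lorentz) weight norm rather than a strong $L^1$ norm; I would obtain it by real (Marcinkiewicz) interpolation between Paley's inequality (endpoint $b=p$) and Hausdorff-Young (endpoint $b=p'$, where the weight disappears), with Paley itself proved by a dyadic level-set decomposition of $\varphi$ combined with Plancherel on each dyadic piece. A secondary technical point is the range $p\le q'\le p'$ required for the HYP step: while $q'\le p'$ is automatic from $p\le q$, the lower bound $q'\ge p$ is equivalent to $\tfrac1p+\tfrac1q\ge 1$, which can fail. In that case I would dualize via $\|T_g\|_{L^p\to L^q}=\|T_{\bar g}\|_{L^{q'}\to L^{p'}}$ (the value of $r$ is preserved), so that the complementary range $\tfrac1p+\tfrac1q<1$ falls into a situation where the Hausdorff-Young-Paley hypotheses are satisfied.
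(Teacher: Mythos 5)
Your proposal is correct and follows essentially the same route the paper itself uses (for its quantum-torus analogue, Theorem \ref{thm5.1}): inverse Hausdorff--Young on the output side, the Hausdorff--Young--Paley inequality with weight $h=|g|^{s}$, $\tfrac1s=\tfrac1p-\tfrac1q$, obtained by interpolating Paley's inequality with Hausdorff--Young, and a dualization $\|T_g\|_{L^p\to L^q}=\|T_{\bar g}\|_{L^{q'}\to L^{p'}}$ to cover the range $\tfrac1p+\tfrac1q<1$. The exponent bookkeeping ($b=q'$, $\varphi=|g|^{r}$, $\||g|^{r}\|_{L^{1,\infty}}^{1/r}=\|g\|_{L^{r,\infty}}$) checks out, so the argument is complete as outlined.
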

It can be found from \cite[p. 303]{E}
that under the same conditions on $p, q,$ and $r,$ for a sequence of complex numbers $g=\{g(k)\}_{k\in\mathbb{Z}^{d}}$ and the corresponding Fourier series multiplier $T_g,$ the same result as in Theorem \ref{Hor-thm} holds. There are also other works in this direction, see e.g. \cite{AMR, ARN, CK, K2, K3, K4, M-Nur, NT1, NT2, RV, RT, RST1, RST2, Z}. For example, in \cite{AR}, authors obtained $L^p- L^q$ H\"ormander type theorem in the context of locally compact groups with applications such as the $L^p- L^q$ boundedness of the heat semigroup and Sobolev embedding theorem in which our results are motivated. Fourier multipliers on quantum tori and their $L^p \to L^p$ completely boundedness properties were studied in \cite{CXY, R, XXY} (see also \cite{STZ} in more general settings of quasi-Banach symmetric spaces). 

In this work, we study the $L^p\to L^q$ boundedness of the Fourier multipliers on quantum tori. In other words, we obtain a quantum analogue of the above Theorem \ref{Hor-thm} on quantum tori (see Theorem \ref{thm5.1}). We should also mention that after we submitted this paper, the same result was obtained very recently in \cite{STT} by a simpler approach as in \cite{Z}. On the way to achieve our goal, in this work, we prove a number of classical inequalities such as Paley, Hausdorff-Young-Paley, Hardy-Littlewood, and Logarithmic Sobolev inequalities on quantum tori which also have further applications in PDEs. As a simple application of the H\"ormander type theorem we obtain a Sobolev type inequality (see Theorem \ref{sobolev-embed}) on quantum tori. As an application of our Nash-type inequality we compute the decay rate for the heat equation for the sub-Laplacian in our setting as in the classical case.

As a second main topic we analyze the families of Besov, Wiener and Beurling spaces on quantum tori. To a large extent, the analysis is based on establishing an appropriate version of Nikolskii’s inequality, which is also known as the reverse H\"{o}lder inequality, in this non-commutative setting. The classical Nikolskii inequality for trigonometric polynomials on the circle was given by \cite{Nik} (see also \cite[Chapter III.3.2, p.  147]{HH1987}). This inequality plays an important role in investigating basic properties of different function spaces \cite{Nik-book, HH1987}, in approximation theory \cite{DL}, and in obtaining embedding theorems \cite{DT, HH1987}. The Nikolskii inequality for the spherical polynomials was proved in \cite[Theorem 1]{K}, see also \cite{Dai}. Furthermore, in \cite{P1} and \cite{P2} Pesenson obtained a very general version of Bernstein-Nikolskii type inequalities on compact homogeneous  and non-compact symmetric spaces, respectively. In this work, we prove the Nikolskii inequality for trigonometric polynomials in non-commutative $L^p$-spaces associated with the quantum tori. In the special case where $\theta= 0,$ our result Theorem \ref{thm3.2} coincides with \cite[Proposition, p. 147]{HH1987} and extends Pesenson’s result \cite{P1} to a wider range of indices $0 < p\leq q\leq \infty$ with a completely different proof approach. Our method is based on the idea in \cite{NRT}, where the authors obtained a similar inequality on compact homogeneous manifolds.
As applications of the Nikolskii inequality, we establish embedding theorems between Besov spaces, as well as embeddings between Besov and Wiener and Beurling spaces on quantum tori. Generally, Besov spaces on quantum tori and their embedding properties were studied in \cite[Chapter V and VI]{XXY} in the case when $1\leq p,q\leq \infty.$ We study similar questions extending the range of indices to $0<p,q\leq \infty$ by using Nikolskii inequality. Apart from embeddings between Besov spaces that can be obtained by trivial arguments, the Nikolskii inequality allows us to derive a rather complete list of embeddings with respect to all three indices of the space which we address in Theorem \ref{embed_Besov}.  Finally, we analyze $\beta$-versions of Wiener and Beurling spaces and their embeddings, and the interpolation properties of all these spaces in quantum tori in Section \ref{inter}.


\section{Preliminaries}
 
 \subsection{Quantum tori} Let $d\geq 2$ be an integer and let $\theta$ be a real anti-symmetric $d\times d$-matrix. We denote by $C(\mathbb{T}^{d}_{\theta})$ a $\ast$-algebra generated by elements $\left(U_k\right)_{1\leq{k}\leq{d}}$ satisfying the condition 
\begin{eqnarray*}
U_kU_m=e^{i\theta_{km}}U_mU_k, \;\;\;U_kU^*_k=U^*_kU_k=1.
\end{eqnarray*}
For any $n\in\mathbb{Z}^d,$ define $e^\theta_n$ by
\begin{equation}\label{basis}
   e^\theta_n=U^{n_1}_1U^{n_2}_2\dots U^{n_d}_d,\quad n=(n_1,\dots,n_d)\in\mathbb{Z}^d. 
\end{equation}
Note that 
\begin{eqnarray}\label{additive2.1}
e^\theta_ne^\theta_m=e^{-i(\sum\limits_{j<k}\theta_{jk}n_jm_k)}e^\theta_{n+m},\;\;\;(e^\theta_n)^*=e^{-i(\sum\limits_{j<k}\theta_{jk}n_jn_k)}e^\theta_{-n}.  
\end{eqnarray}
A polynomial in $C(\mathbb{T}^{d}_{\theta})$ is a finite sum 
\begin{eqnarray*}
x= \sum\limits_{m\in\mathbb{Z}^d}\alpha(m) e^\theta_m,\quad\alpha(m)\in\mathbb{C},\quad m\in\mathbb{Z}^d,    
\end{eqnarray*}
that is, $\alpha(m) = 0$ for all but finite indices $m\in\mathbb{Z}^d.$
The involution algebra of all such polynomials
is dense in $C(\mathbb{T}^{d}_{\theta})$ (see, \cite{Ponge1}). For any polynomial $x$ as above, we define a linear functional $\tau_\theta$ by the formula 
\begin{equation}\label{additive2.2}\tau_{\theta}(x) = \alpha(0),\,\ 0 = (0, \ldots, 0) \in \mathbb{Z}^{d}.
\end{equation}
This \(\tau_{\theta}\) extends to a normal, faithful, finite tracial state on $C(\mathbb{T}^{d}_{\theta})$ (see, \cite{Ponge1}).
By formulas (\ref{additive2.1})-(\ref{additive2.2}) and by a simple computation we find some relations which will be useful for our further investigations. Indeed, we have that 
\begin{equation}\label{additive2.3}
 |\tau_\theta(e^\theta_n e^\theta_m)|\overset{ \text{(\ref{additive2.1}})}{=}|e^{-i(\sum\limits_{j<k}\theta_{jk}n_jm_k)}\tau_\theta(e^\theta_{n+m})|\overset{ \text{(\ref{additive2.2}})}{=}|e^{i(\sum\limits_{j<k}\theta_{jk}n_jm_k)}||\tau_\theta(e^\theta_{n+m})|\overset{ \text{(\ref{additive2.2}})}{=}1 
\end{equation}
whenever $n+m=0,$
and 
\begin{eqnarray}\label{additive2.4}
 \tau_\theta((e^\theta_n)^*e^\theta_n)\overset{ \text{(\ref{additive2.1}})}{=}\tau_\theta(e^{-i(\sum\limits_{j<k}\theta_{jk}n_jn_k)}e^\theta_{-n}e^\theta_n) \overset{ \text{(\ref{additive2.1}})}{=}\tau_\theta(e^\theta_0)\overset{ \text{(\ref{additive2.2}})}{=}1.
\end{eqnarray}
Also, $ \tau_\theta((e^\theta_n)^*e^\theta_m)=0$ for $n\neq m.$
Moreover, we have
\begin{eqnarray}\label{Plancerel-relation}
\nonumber\tau_\theta((\sum\limits_{m\in\mathbb{Z}^d}\alpha(m) e^\theta_m)(\sum\limits_{n\in\mathbb{Z}^d}\beta(n) e^\theta_n))&=&\sum\limits_{m,n\in\mathbb{Z}^d}\alpha(m)\beta(n) \tau_\theta\left(e^\theta_me^\theta_n\right)\\
&\overset{ \text{(\ref{additive2.3}})}{=}&\sum\limits_{n\in\mathbb{Z}^d}e^{i(\sum\limits_{j<k}\theta_{jk}n_jn_k)}\alpha(-n)  \beta(n),
\end{eqnarray}
which implies that 
\begin{eqnarray*}
\tau_\theta\left(xy\right)=\tau_\theta\left(yx\right),\;\;\;x,y\in C(\mathbb{T}^{d}_{\theta}).
\end{eqnarray*}
Let us equip $C(\mathbb{T}^{d}_{\theta})$ with an inner product defined by the formula 
\begin{eqnarray*}
(x, y)=\tau_\theta\left(xy^*\right).
\end{eqnarray*}
Then $(C(\mathbb{T}^{d}_{\theta}),(\cdot,\cdot))$ becomes a pre-Hilbert space. Let $L^2(\mathbb{T}^{d}_{\theta})$ be the completion of $(C(\mathbb{T}^{d}_{\theta}),(\cdot,\cdot))$ defined by the (nondegenerate) above inner product, which makes $H := L^2(\mathbb{T}^{d}_{\theta})$ a separable Hilbert space. Obviously, $\{e^{\theta}_n\}_{n\in\mathbb{Z}^d}$ is an orthonormal basis in $L^2(\mathbb{T}^{d}_{\theta}).$ We frequently refer to $\{e^{\theta}_n\}_{n\in\mathbb{Z}^d}$ as a trigonometric basis (with which it coincides for $\theta=0$).
We represent $C(\mathbb{T}^{d}_{\theta})$ on $L^2(\mathbb{T}^{d}_{\theta})$ by left multiplication $\lambda_l(x)y := xy,$ and denote by $L^{\infty}(\mathbb{T}^{d}_{\theta})$ the corresponding weak$^*$ closure of $\lambda_{l}(C(\mathbb{T}^{d}_{\theta}))$, a von Neumann algebra with operator norm denoted by $\|\cdot\|_{\infty}$.  Recall that the von Neumann algebra $L^{\infty}(\mathbb{T}^d_\theta)$  
is hyper finite \cite{XXY}.
For $x=\sum\limits_{n\in\mathbb{Z}^d}\alpha(n)e^\theta_n$, we have 
\begin{eqnarray}\label{additive2.6}
\tau_\theta\left(x^*x\right)&=&\tau_\theta\left(\left(\sum\limits_{m\in\mathbb{Z}^d}\alpha(m) e^\theta_m\right)^*\left(\sum\limits_{n\in\mathbb{Z}^d}\alpha(n) e^\theta_n\right)\right)\nonumber\\&=&\sum\limits_{m,n\in\mathbb{Z}^d}\overline{\alpha(m)}\alpha(n)\tau_\theta\left((e^\theta_m)^*e^\theta_n\right)\nonumber\\&=&\sum\limits_{n\in\mathbb{Z}^d}|\alpha(n)|^2.
\end{eqnarray}

Therefore, $\tau_\theta\left(x^*x\right)=0$ implies that $x=0$. 
 Both $\tau_{\theta}$ and $\lambda_l$ extend continuously to $L^{\infty}(\mathbb{T}^{d}_{\theta}),$ giving a faithful tracial state $\tau_{\theta}: L^{\infty}(\mathbb{T}^{d}_{\theta})\to\mathbb{C}$ and a faithful representation (injective $*$-homomorphism) $\lambda_l : L^{\infty}(\mathbb{T}^{d}_{\theta})\to \mathbb{B}(L^2(\mathbb{T}^{d}_{\theta})),$ where $\mathbb{B}(L^2(\mathbb{T}^{d}_{\theta}))$ is the algebra of all linear bounded operators on $L^2(\mathbb{T}^{d}_{\theta}).$
We identify $L^{\infty}(\mathbb{T}^{d}_{\theta})\subseteq L^{2}(\mathbb{T}^{d}_{\theta}).$
In fact, the unitaries $U_1,...,U_d$ can be concretely realized as unitary operators on $L^2(\mathbb{T}^{d}).$ Indeed, let $\theta$ be a real anti-symmetric $d\times d$-matrix and $\theta_1,...,\theta_d$ be its column vectors. Then, for $j=1,...,d$ we define the unitary operators 
$$(U_j\xi)(s)=e^{i s_j}\xi(s+\pi\theta_j), \quad \xi \in L^2(\mathbb{T}^d),$$ 
which satisfy the relations,
$$U_k U_j=e^{2\pi i \theta_{jk}}U_jU_k, \quad j,k=1,...,d.$$
In this case, our $C^*$-algebra $C(\mathbb{T}^{d}_{\theta})$ is generated by the above unitaries. For more details, we refer the reader to \cite{Ponge1}. 
If $\theta=0$, then our spaces $C(\mathbb{T}^{d}_{\theta})$ and $L^{\infty}(\mathbb{T}^{d}_{\theta})$ coincide with the spaces of continuous functions $C(\mathbb{T}^{d})$ and essentially bounded functions $L^{\infty}(\mathbb{T}^{d})$ on the ordinary $d$-torus $\mathbb{T}^d=\mathbb{R}^d/2\pi \mathbb{Z}^d.$
The pair $(L^{\infty}(\mathbb{T}^{d}_{\theta}),\tau_{\theta})$ is called a non-commutative measure space. As usual, we define $L^p(\mathbb{T}^d_{\theta})$ as a non-commutative $L^p$-space associated with the von Neumann algebra $L^{\infty}(\mathbb{T}^d_{\theta})$ and the trace $\tau_{\theta}.$ For more details on non-commutative $L^p$ spaces associated with a general semifinite von Neumann algebra we refer the reader to \cite{DdePS}, \cite{LSZ2012}, \cite{PXu}. Indeed, for any $0< p<\infty,$ we can define the $L^p$-norm on this space by the Borel functional calculus and the following formula:
$$\|x\|_{p}=\Big(\tau_{\theta}(|x|^p)\Big)^{1/p},\quad x\in L^{\infty}(\mathbb{T}^{d}_{\theta}), $$
where $|x|:=(x^{*}x)^{1/2}.$
The completion of $\{x\in L^{\infty}(\mathbb{T}^{d}_{\theta}) :\|x\|_{p}<\infty\}$ with respect to $\|\cdot\|_{p}$ is denoted by $L^p(\mathbb{T}^d_{\theta}).$ The elements of $L^p(\mathbb{T}^d_{\theta})$ are $\tau_{\theta}$-measurable operators like in the commutative case. These are linear densely defined closed (possibly unbounded) affiliated with $L^{\infty}(\mathbb{T}^{d}_{\theta})$ operators. For $x\in L^1(\mathbb{T}^d_{\theta})$ we can define the formal Fourier series of $x$ by
$$x\sim\sum_{n\in\mathbb{Z}^d}\widehat{x}(n)e^{\theta}_n
$$
with $\widehat{x}(n)=\tau_{\theta}(x(e^{\theta}_n)^{\ast}), \,\ n\in\mathbb{Z}^{d}.$
Every element $x\in L^2(\mathbb{T}^d_{\theta})$ admits a unique representation of the form
\begin{equation}\label{fourier-ser}x=\sum_{n\in\mathbb{Z}^d}\widehat{x}(n)e^{\theta}_n,\quad \{\widehat{x}(n)\}_{n\in\mathbb{Z}^d}\in \ell^2(\mathbb{Z}^d),
\end{equation}
where the series converges in $L^2(\mathbb{T}^d_{\theta})$-norm and
\begin{equation}\label{fourier-coef}
\widehat{x}(n)=\tau_{\theta}(x(e^{\theta}_n)^{\ast}), \,\ n\in\mathbb{Z}^{d}.
\end{equation}
Moreover, we have the Plancherel (or Parseval) identity (see, 
\cite[Formula 2.3]{MSQ2019})
\begin{eqnarray}\label{additive2.8}
\|x\|_{L^2\left(\mathbb{T}^d_\theta\right)}=\|\widehat{x}\|_{\ell^2\left(\mathbb{Z}^d\right)}. 
\end{eqnarray}
\begin{rem}\label{rem2.1} It is well known from the general theory of linear bounded operators that the   composition of two unitary operators and any integer power of unitary operators are also unitary operators. Since  $e^\theta_n$ is defined by a composition of unitary operators (see \eqref{basis}), it is itself a unitary operator for all $n\in\mathbb{Z}^d$ and we have
\begin{eqnarray}\label{additive2.5}
\|e^\theta_n\|_{ L^\infty(\mathbb{T}^d_\theta)}=\|(e^\theta_n)^*\|_{ L^\infty(\mathbb{T}^d_\theta)}=1. 
\end{eqnarray}
\end{rem}

The space $C^{\infty}(\mathbb{T}^d_\theta)$ is defined to be the set of $x=\sum\limits_{n\in\mathbb{Z}^d}\widehat{x}(n)e_n^{\theta}$ such that the sequence of Fourier coefficients $\{\widehat{x}(n)\}_{n\in\mathbb{Z}^d}$ has a rapid decay (the sequence $\{\widehat{x}(n)\}_{n\in\mathbb{Z}^d}$ is
eventually dominated by the reciprocal of any polynomial, that is for any $m\in \mathbb{Z}_+$ there is $c_m>0$ such that $|\widehat{x}(n)|\leq c_m (1+|n|)^{-m},  n\in\mathbb{Z}^d,$ where $|\cdot|$ is Euclidean length on $\mathbb{Z}^d$). This is a weak$^*$-dense subalgebra of $L^{\infty}(\mathbb{T}^{d}_{\theta})$ that contains all polynomials. We may consider $C^{\infty}(\mathbb{T}^d_\theta)$ as a non-commutative deformation of the space of smooth functions on $\mathbb{T}^d.$
There is also a canonical Fr\'{e}chet topology
on $C^{\infty}(\mathbb{T}^d_\theta).$ The space $\mathcal{D}'(\mathbb{T}^d_\theta)$ is defined to be the topological dual of $C^{\infty}(\mathbb{T}^d_\theta)$ and is called the space of distributions on $L^\infty(\mathbb{T}^d_\theta).$ In other words, the dual space $\mathcal{D}'(\mathbb{T}^d_\theta)$ is endowed with the weak$^*$-topology. We denote by $\langle\cdot,\cdot\rangle$
the duality bracket between $C^{\infty}(\mathbb{T}^d_\theta)$ and $\mathcal{D}'(\mathbb{T}^d_\theta)$, that is 
$F(x):=\langle F,x\rangle$ for $x\in C^{\infty}(\mathbb{T}^d_\theta)$ and $F\in \mathcal{D}'(\mathbb{T}^d_\theta).$ Moreover, for $1\leq p\leq \infty,$ the space $L^p(\mathbb{T}^d_{\theta})$ is naturally embedded into $\mathcal{D}'(\mathbb{T}^d_\theta).$ For more details, see \cite{XXY}. 

Throughout this paper, we write $\mathcal{A}\lesssim \mathcal{B}$ if there is a constant $c> 0$ depending only on parameters of spaces such that
$\mathcal{A}\leq c\mathcal{B}.$ We write $\mathcal{A}\asymp \mathcal{B}$ if both $\mathcal{A}\lesssim \mathcal{B}$ and $\mathcal{A}\gtrsim \mathcal{B}$ hold, possibly with different constants.

 \subsection{Fourier  multipliers on quantum tori} 
 Let $g:\mathbb{Z}^d\rightarrow\mathbb{C}$  be a sequence on $\mathbb{Z}^d$. The Fourier multiplier $T_g$ is defined on $x\in L^2( \mathbb{T}^d_\theta)$ with the symbol $g$ by
\begin{eqnarray}\label{additive2.9}
T_g(x)=\sum\limits_{n\in\mathbb{Z}^d}g(n)\widehat{x}(n)e^\theta_n,\;\;\;x \in L^2( \mathbb{T}^d_\theta).  
\end{eqnarray}
If $g$ is a bounded  sequence, then by virtue of the Plancherel identity (\ref{additive2.8}) and the uniqueness of the representation \eqref{fourier-ser}, $T_g$ indeed defines a bounded linear operator on $L^2( \mathbb{T}^d_\theta)$ and the above series converges in $L^2( \mathbb{T}^d_\theta)$-sense. If $g$ is unbounded, we may define $T_g$  on the dense subspace of $L^2(\mathbb{T}^d_\theta)$ of those $x$ with finitely many non-zero Fourier coefficients. 

\begin{definition}\label{def-adj}
Let $1 \leq p,q \leq \infty$ and $p',q'$ be the corresponding H\"{o}lder conjugates of $p,q.$  For a bounded linear operator $T:L^p(\mathbb{T}^d_\theta)\to L^q(\mathbb{T}^d_\theta)$ we denote by $T^* :L^{q'}(\mathbb{T}^d_\theta)\to L^{p'}(\mathbb{T}^d_\theta)$  its adjoint operator defined by
\begin{eqnarray}\label{def-dul-operator}
 (T(x),y):=\tau_{\theta}(T(x) y^*)=\tau_{\theta}(x(T^*(y))^*)=(x,T^*(y))   \end{eqnarray} 
for all $x\in L^p(\mathbb{T}^d_\theta)$ and $y\in L^{q'}(\mathbb{T}^d_\theta)$ (or in a dense subspace of it).  
\end{definition}

\begin{lem}\label{Duality}Let $1\leq p,q\leq \infty$ and let $T_g:L^p(\mathbb{T}^d_\theta)\to L^q(\mathbb{T}^d_\theta)$ be the Fourier multiplier defined by \eqref{additive2.9} with the symbol $g.$ Then its adjoint $T^*_{g}:L^{q'}(\mathbb{T}^d_\theta)\to L^{p'}(\mathbb{T}^d_\theta)$ equals to $T_{\overline{g}},$ where $\overline{g}$ is the complex conjugate of the function $g,$ in the sense of Definition \ref{def-adj}.    
\end{lem}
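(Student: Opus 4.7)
The plan is to verify the pointwise identity $\tau_\theta(T_g(x) y^*) = \tau_\theta(x(T_{\overline{g}}(y))^*)$ first on the dense subspace of trigonometric polynomials in both arguments, and then extend by density and boundedness to obtain $T_g^* = T_{\overline{g}}$ in the sense of Definition \ref{def-adj}.

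The key ingredient is the bi-orthogonality relation $\tau_\theta(e^\theta_n (e^\theta_m)^*) = \delta_{n,m}$. To establish it, I would apply \eqref{additive2.1} twice: first to expand $(e^\theta_m)^* = e^{-i\sum_{j<k}\theta_{jk} m_j m_k}\, e^\theta_{-m}$, and then to expand $e^\theta_n e^\theta_{-m} = e^{i\sum_{j<k}\theta_{jk} n_j m_k}\, e^\theta_{n-m}$. Multiplying the two exponentials, the phase factors combine to $e^{i\sum_{j<k}\theta_{jk}(n_j-m_j)m_k}$, so that
\begin{equation*}
\tau_\theta(e^\theta_n (e^\theta_m)^*) = e^{i\sum_{j<k}\theta_{jk}(n_j-m_j)m_k}\, \tau_\theta(e^\theta_{n-m}),
\end{equation*}
which by \eqref{additive2.2} vanishes unless $n=m$, in which case the exponent itself vanishes and the value is $1$.

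With this in hand, for finite expansions $x = \sum_n \alpha(n) e^\theta_n$ and $y = \sum_m \beta(m) e^\theta_m$, a direct computation using the linearity of $\tau_\theta$ gives
\begin{equation*}
\tau_\theta(T_g(x) y^*) \;=\; \sum_{n,m} g(n)\alpha(n)\overline{\beta(m)}\, \tau_\theta\!\bigl(e^\theta_n (e^\theta_m)^*\bigr) \;=\; \sum_{n} g(n)\alpha(n)\overline{\beta(n)},
\end{equation*}
and an identical calculation applied to $\tau_\theta(x (T_{\overline{g}}(y))^*)$ yields the same expression, since the complex conjugate of $\overline{g(n)}\beta(n)$ is precisely $g(n)\overline{\beta(n)}$. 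Thus the identity $(T_g(x),y) = (x, T_{\overline{g}}(y))$ holds on trigonometric polynomials.

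For the extension, I would use that polynomials are norm-dense in $L^p(\mathbb{T}^d_\theta)$ for $1 < p < \infty$ and that $T_g$ is bounded $L^p\to L^q$ (so the adjoint $T_g^*:L^{q'}\to L^{p'}$ is automatically well-defined and bounded by Definition \ref{def-adj}). By density, the pairing identity extends from polynomials to all $x \in L^p$ and $y \in L^{q'}$, which shows simultaneously that $T_{\overline{g}}$ extends to a bounded operator $L^{q'} \to L^{p'}$ and that this extension coincides with $T_g^*$. The only mildly technical point is tracking the cancellation of the $\theta$-dependent phases in the orthogonality computation; once that is verified, the result reduces to the Parseval-type pairing already implicit in \eqref{additive2.6} and \eqref{additive2.8}.
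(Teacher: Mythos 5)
Your proposal is correct and follows essentially the same route as the paper: verify the pairing identity $(T_g(x),y)=(x,T_{\overline g}(y))$ on the dense subspace of (smooth/polynomial) elements via the orthogonality relation $\tau_\theta\bigl(e^\theta_n(e^\theta_m)^*\bigr)=\delta_{n,m}$, then extend by density of polynomials in $L^p(\mathbb{T}^d_\theta)$. Your explicit tracking of the $\theta$-phase cancellation is a correct (and slightly more detailed) justification of the orthogonality step that the paper simply cites.
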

\begin{proof} By formulas \eqref{additive2.2} and \eqref{additive2.9} we write

\begin{eqnarray*}
( x,T^*_{g}(y))&\overset{ \text{\eqref{def-dul-operator}}}{=}&\tau_{\theta}(x(T_{g}^*(y))^*)=\tau_{\theta}(T_{g}(x) y^*)\\
 &\overset{\text{(\ref{additive2.9})}}{=}&\tau_{\theta}\Big( \sum\limits_{n\in\mathbb{Z}^d}g(n)\widehat{x}(n)e_n^\theta
 \big(\sum\limits_{m\in\mathbb{Z}^d} \widehat{y}(m)e_m^\theta\big)^*\Big)\\
 &\overset{ \text{(\ref{additive2.2})}}{=}&\sum\limits_{m,n\in\mathbb{Z}^d}g(n)\widehat{x}(n)\overline{\widehat{y}(m)}\tau_{\theta} \big(e_n^\theta 
   (e_m^\theta)^*\big)\\
&=&\tau_{\theta}\Big( \sum\limits_{n\in\mathbb{Z}^d} \widehat{x}(n)e_n^\theta  \sum\limits_{m\in\mathbb{Z}^d} g(m)\overline{\widehat{y}(m)}(e_m^\theta)^*\Big)\\
&=&\tau_{\theta}\Big( \sum\limits_{n\in\mathbb{Z}^d} \widehat{x}(n)e_n^\theta  \big(\sum\limits_{m\in\mathbb{Z}^d} \overline{g(m)}\widehat{y}(m)e_m^\theta\big)^*\Big)\\
&=&(x,T_{\overline{g}}(y)),\quad x, y\in C^{\infty}(\mathbb{T}^{d}_{\theta}),
\end{eqnarray*}
where 
$$T_{\overline{g}}(y):=\sum\limits_{m\in\mathbb{Z}^d}\overline{g(m)}\widehat{y}(m)e_m^\theta,\quad y\in C^{\infty}(\mathbb{T}^{d}_{\theta}),$$
and  we used that $\tau_{\theta} \big(e_n^\theta 
   (e_m^\theta)^*\big)=0$ for $n\neq m.$
Since $C^{\infty}(\mathbb{T}^{d}_{\theta})$ is dense in $L^p(\mathbb{T}^d_\theta)$ (see \cite[Proposition 2.7. (ii), p. 21]{XXY}) for $1\leq p<\infty$ (with respect to the w*-topology for $p=\infty$), we have  $T^*_{g}=T_{\overline{g}}.$
\end{proof}

\section{Hausdorff-Young inequality and its inverse form }\label{H_Y_section}

In this section, we obtain the Hausdorff-Young inequality and its inverse form for the non-commutative torus. 

 \begin{thm}\label{thm3.1} (Hausdorff-Young inequality).   Let $1\leq{p}\leq2$  and $\frac{1}{p}+\frac{1}{p'}=1$. Then for any $x\in L^p(\mathbb{T}^d_\theta)$ we have
\begin{eqnarray}\label{additive3.1}
\|\widehat{x}\|_{\ell ^{p'}(\mathbb{Z}^d )}  \leq   \|x\|_{L^p(\mathbb{T}^d_\theta)}.
\end{eqnarray}
\end{thm}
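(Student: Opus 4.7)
The plan is to prove Theorem \ref{thm3.1} by the classical two-step approach: establish the endpoint bounds at $p=1$ and $p=2$, and then interpolate. Since the target $\ell^{p'}(\mathbb{Z}^d)$ is commutative but the source $L^p(\mathbb{T}^d_\theta)$ is a non-commutative $L^p$-space associated with the semifinite trace $\tau_\theta$, I will rely on the Riesz--Thorin interpolation theorem in the non-commutative setting (available in \cite{PXu,LSZ2012} for $L^p$ spaces built from a semifinite von Neumann algebra).

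First I would dispose of the $L^2$ endpoint, which is immediate from the Plancherel identity \eqref{additive2.8}: the operator $\mathcal{F}: x \mapsto \{\widehat{x}(n)\}_{n\in\mathbb{Z}^d}$ satisfies $\|\mathcal{F}(x)\|_{\ell^2(\mathbb{Z}^d)} = \|x\|_{L^2(\mathbb{T}^d_\theta)}$, so $\mathcal{F}$ maps $L^2(\mathbb{T}^d_\theta)$ into $\ell^2(\mathbb{Z}^d)$ with norm $1$. Next I would handle the $L^1$-to-$\ell^\infty$ endpoint. Using formula \eqref{fourier-coef}, for $x \in L^1(\mathbb{T}^d_\theta)$ and $n \in \mathbb{Z}^d$ one has $\widehat{x}(n) = \tau_\theta(x(e^\theta_n)^*)$, and the non-commutative Hölder inequality (the tracial duality $|\tau_\theta(ab)| \leq \|a\|_1\|b\|_\infty$) together with \eqref{additive2.5} from Remark \ref{rem2.1} yields
\begin{equation*}
|\widehat{x}(n)| \leq \|x\|_{L^1(\mathbb{T}^d_\theta)} \, \|(e^\theta_n)^*\|_{L^\infty(\mathbb{T}^d_\theta)} = \|x\|_{L^1(\mathbb{T}^d_\theta)}.
\end{equation*}
Taking the supremum over $n$ gives $\|\mathcal{F}(x)\|_{\ell^\infty(\mathbb{Z}^d)} \leq \|x\|_{L^1(\mathbb{T}^d_\theta)}$, so $\mathcal{F}$ also has norm at most $1$ from $L^1(\mathbb{T}^d_\theta)$ to $\ell^\infty(\mathbb{Z}^d)$.

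Finally, I would interpolate. Applying the complex interpolation theorem for non-commutative $L^p$-spaces to the compatible pair $\big(L^1(\mathbb{T}^d_\theta), L^2(\mathbb{T}^d_\theta)\big)$ and the pair $\big(\ell^\infty(\mathbb{Z}^d), \ell^2(\mathbb{Z}^d)\big)$, with interpolation parameter $\vartheta \in [0,1]$ chosen so that $\tfrac{1}{p} = \tfrac{1-\vartheta}{1} + \tfrac{\vartheta}{2}$ (equivalently $\tfrac{1}{p'} = \tfrac{1-\vartheta}{\infty} + \tfrac{\vartheta}{2}$), one obtains a bounded linear extension $\mathcal{F}: L^p(\mathbb{T}^d_\theta) \to \ell^{p'}(\mathbb{Z}^d)$ with norm at most $1^{1-\vartheta}\cdot 1^{\vartheta} = 1$, which is precisely \eqref{additive3.1}. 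The density of $C^\infty(\mathbb{T}^d_\theta)$ in $L^p(\mathbb{T}^d_\theta)$ (already used in Lemma \ref{Duality}) lets one extend $\mathcal{F}$ unambiguously.

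The only genuine point of care is the invocation of the non-commutative Riesz--Thorin theorem: one needs to verify that $\mathcal{F}$ defines a compatible linear operator on a joint dense subspace (the polynomials, or $C^\infty(\mathbb{T}^d_\theta)$), which is clear from the definition of $\widehat{x}(n)$. Everything else is routine application of the formulas \eqref{fourier-coef}, \eqref{additive2.5}, and \eqref{additive2.8} already established in the preliminaries.
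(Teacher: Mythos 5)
Your proposal is correct and follows essentially the same route as the paper: the $L^1\to\ell^\infty$ bound via the tracial H\"older inequality and unitarity of $e_n^\theta$, the $L^2\to\ell^2$ identity via Plancherel, and Riesz--Thorin interpolation for non-commutative $L^p$-spaces with the same choice of interpolation parameter. Your additional remark about compatibility of the operator on a joint dense subspace is a reasonable point of care that the paper treats implicitly.
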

\begin{proof}   We denote by $T$   the  
 operator defined by $T(x)=\{\widehat{x}(n)\}_{n\in\mathbb{Z}^d}$ for  $x\in L^p(\mathbb{T}^d_\theta),$ $1\leq p\leq 2.$ Clearly, $T$ is linear due to the linearity of the trace $\tau_\theta$ defined by (\ref{additive2.2}). Furthermore, by formulas  (\ref{additive2.5})  and (\ref{fourier-coef}) and using the  non-commutative H\"{o}lder inequality (see \cite[Theorem 1]{Sukochev}), we obtain  
\begin{eqnarray*} 
\|T(x)\|_{\ell^\infty(\mathbb{Z}^d )}&=&\|\widehat{x}\|_{\ell^\infty(\mathbb{Z}^d )} \overset{ \text{(\ref{fourier-coef})} }{=} \sup\limits_{n\in\mathbb{Z}^d}|\tau_\theta\left(x(e^\theta_n)^*\right)|  \\
&\leq&  \sup\limits_{n\in\mathbb{Z}^d}\|(e^\theta_n)^*\|_{L^\infty(\mathbb{T}^d_\theta)}\|x\|_{L^1(\mathbb{T}^d_\theta)} \overset{ \text{(\ref{additive2.5})} }{\leq}\|x\|_{L^1(\mathbb{T}^d_\theta)},  
\end{eqnarray*}  
 for all $n\in\mathbb{Z}^d.$
 Moreover, by the Plancherel identity (\ref{additive2.8}), we have    
\begin{eqnarray*} 
\|T(x)\|_{\ell^2(\mathbb{Z}^d)} =\|\widehat{x}\|_{\ell^2(\mathbb{Z}^d)}  \overset{ \text{(\ref{additive2.8})} }{=}\|x\|_{L^2 (\mathbb{T}^d_\theta )},\quad x\in L^2 (\mathbb{T}^d_\theta ).
\end{eqnarray*} 
Thus, $T$ is bounded from  $L^1(\mathbb{T}^d_\theta)$ into $\ell^\infty(\mathbb{Z}^d)$ and from $L^2(\mathbb{T}^d_\theta)$ into $\ell^2(\mathbb{Z}^d),$ with the operator norms at most 1. If $\eta=2/p',$ then $0\leq \eta\leq 1$ and we have $\frac{1}{p}=\frac{1-\eta}{1}+\frac{\eta}{2}$ and $\frac{1}{p'}=\frac{1-\eta}{\infty}+\frac{\eta}{2}.$
Hence, the assertion follows from the Riesz-Thorin Interpolation Theorem (see, \cite[Theorem 2.1]{PXu}).  
\end{proof} 

 As a direct consequence of Theorem \ref{thm3.1} we obtain the following Riemann-Lebesgue type lemma on $\mathbb T^d_\theta.$
\begin{lem}Let $x\in L^1(\mathbb{T}_{\theta}^d).$ Then $|\widehat{x}(m)| \to 0$ as $|m| \to\infty.$
\end{lem}
\begin{proof}  Let $p$ be a trigonometric polynomial defined by
$$
p = \sum_{m \in \mathbb{Z}^d} \widehat{p}(m) e_m^{\theta}, \qquad \widehat{p}(m)\in \mathbb{C}, \quad m \in \mathbb{Z}^d,
$$
that is, $\widehat{p}(m) = 0$ for all but finitely many $m \in \mathbb{Z}^d.$  
Since the involutive algebra of such polynomials is dense in $L^1(\mathbb{T}^{d}_{\theta})$ (see \cite{XXY}), for every operator $x \in L^1(\mathbb{T}^d_{\theta})$ and $\varepsilon>0,$ there exists a trigonometric polynomial $p$ such that  
$$
\|x - p\|_{L^1(\mathbb{T}^d_{\theta})} < \varepsilon. 
$$
Consequently, if  $|m|> \text{degree}(p),$ then $\widehat{p}(m) = 0,$ and thus by the Hausdorff-Young inequality (Theorem \ref{thm3.1} for $p=1$), we obtain
$$
|\widehat{x}(m)| = |\widehat{x}(m) - \widehat{p}(m)| =|\widehat{(x-p)}(m)|\leq \|x - p\|_{L^1(\mathbb{T}^d_{\theta})} < \varepsilon. 
$$
This completes the proof.
\end{proof}
The following is an analog of the classical Riesz-Fischer Theorem for Fourier Series on the non-commutative torus.
\begin{thm}(Riesz-Fischer Theorem for Fourier Series)\label{R-F-thm} Let $\{\xi(n)\}_{n\in\mathbb{Z}^d}$ be any complex valued sequence for which $\sum\limits_{n\in\mathbb{Z}^d}|\xi(n)|^2$ converges. Then there exists an operator $x\in L^2(\mathbb{T}^d_{\theta})$ with $\widehat{x}(n)=\xi(n)$ for all $n\in\mathbb{Z}^d.$ In particular,
$$\sum\limits_{n\in \mathbb{Z}^d}|\xi(n)|^2=\|x\|^{2}_{L^2(\mathbb{T}^d_{\theta})}.$$
\end{thm}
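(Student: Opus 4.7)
The plan is to construct $x$ as the $L^2$-limit of the natural partial sums of the Fourier series with coefficients $\xi(n)$, and then verify the two claimed properties (identifying the Fourier coefficients and computing the $L^2$-norm) using the orthonormality of the trigonometric basis already established in the preliminaries.

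More concretely, for each $N\in\mathbb{N}$, I set
\[
x_N := \sum_{|n|\le N} \xi(n)\, e^\theta_n,
\]
which is a polynomial and therefore lies in $L^2(\mathbb{T}^d_\theta)$. Using that $\{e^\theta_n\}_{n\in\mathbb{Z}^d}$ is an orthonormal basis of $L^2(\mathbb{T}^d_\theta)$, i.e. $\tau_\theta((e^\theta_n)^*e^\theta_m)=\delta_{n,m}$ (which is exactly what the preliminaries encode in \eqref{additive2.3}, \eqref{additive2.4} and \eqref{additive2.6}), I compute
\[
\|x_N-x_M\|_{L^2(\mathbb{T}^d_\theta)}^2
=\sum_{M<|n|\le N}|\xi(n)|^2 \quad (M<N),
\]
which tends to $0$ as $M,N\to\infty$ because $\sum_{n\in\mathbb{Z}^d}|\xi(n)|^2$ is a convergent series. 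Hence $(x_N)_N$ is Cauchy in the complete space $L^2(\mathbb{T}^d_\theta)$, and there exists $x\in L^2(\mathbb{T}^d_\theta)$ such that $x_N\to x$ in $L^2$.

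Next I identify the Fourier coefficients of this limit $x$. Fix $n\in\mathbb{Z}^d$ and note that, for any $y\in L^2(\mathbb{T}^d_\theta)$, the non-commutative Cauchy--Schwarz inequality combined with $\|e^\theta_n\|_{L^\infty(\mathbb{T}^d_\theta)}=1$ (see \eqref{additive2.5}) gives
\[
|\widehat{y}(n)|=|\tau_\theta(y\,(e^\theta_n)^*)|\le \|y\|_{L^2(\mathbb{T}^d_\theta)}\,\|(e^\theta_n)^*\|_{L^2(\mathbb{T}^d_\theta)} = \|y\|_{L^2(\mathbb{T}^d_\theta)},
\]
so the functional $y\mapsto \widehat{y}(n)$ is continuous on $L^2(\mathbb{T}^d_\theta)$. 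Since $\widehat{x_N}(n)=\xi(n)$ as soon as $|n|\le N$, passing to the limit yields $\widehat{x}(n)=\xi(n)$ for every $n\in\mathbb{Z}^d$.

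Finally, the Plancherel identity \eqref{additive2.8} applied to $x\in L^2(\mathbb{T}^d_\theta)$ gives
\[
\|x\|_{L^2(\mathbb{T}^d_\theta)}^2 = \sum_{n\in\mathbb{Z}^d}|\widehat{x}(n)|^2 = \sum_{n\in\mathbb{Z}^d}|\xi(n)|^2,
\]
which is the asserted norm identity. There is no real obstacle here beyond correctly invoking the already-proved orthonormality and Plancherel identity; the only point requiring a (trivial) verification is the continuity of the coefficient functional, needed to justify $\widehat{x_N}(n)\to \widehat{x}(n)$.
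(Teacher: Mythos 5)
Your proof is correct and is exactly the standard Riesz--Fischer argument (partial sums, completeness of $L^2(\mathbb{T}^d_\theta)$, continuity of the coefficient functionals, Plancherel) that the paper itself omits with a reference to the classical commutative case in Hewitt--Stromberg. Nothing to add beyond the minor remark that your bound on $|\widehat{y}(n)|$ can be stated either via Cauchy--Schwarz with $\|(e^\theta_n)^*\|_{L^2}=1$ or via H\"older with $\|(e^\theta_n)^*\|_{L^\infty}=1$; both follow from \eqref{additive2.4}--\eqref{additive2.5}.
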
 
\begin{proof}The proof is similar to the commutative case, therefore, is omitted (see \cite[Theorem 16.39, p. 250]{HS}).    
\end{proof}
The following result shows that inequality \eqref{additive3.1} remains true (with the reversed inequality) for $2\leq p\leq \infty.$
\begin{thm}\label{R-H-Y_ineq} If $2\leq p\leq\infty$ and $\xi=\{\xi(n)\}_{n\in \mathbb{Z}^{d}}\in \ell^{p'}(\mathbb{Z}^d),$ then the operator $x$ with $\widehat{x}(n)=\xi(n)$ for all $n\in \mathbb{Z}^{d}$ belongs to $L^{p}(\mathbb{T}^d_\theta),$ and we have
 \begin{eqnarray}\label{additive3.1'}
\|x\|_{L^p(\mathbb{T}^d_\theta)} \leq \|\widehat{x}\|_{\ell^{p'}(\mathbb{Z}^d )}
\end{eqnarray}
where $\frac{1}{p}+\frac{1}{p'}=1.$
 \end{thm}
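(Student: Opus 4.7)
The strategy is to mimic the proof of Theorem \ref{thm3.1} but run the interpolation on the inverse Fourier series map rather than on the Fourier transform. I would define the linear map $S$ on the dense subspace of finitely supported sequences $\xi\in\ell^{p'}(\mathbb{Z}^d)$ by
$$S(\xi)=\sum_{n\in\mathbb{Z}^d}\xi(n)e^{\theta}_n,$$
which lies in $C^{\infty}(\mathbb{T}^d_\theta)$ and satisfies $\widehat{S(\xi)}(n)=\xi(n)$ for every $n$ by (\ref{additive2.2}) and the orthogonality relation $\tau_\theta((e^\theta_n)^*e^\theta_m)=0$ for $n\neq m$ noted after (\ref{additive2.4}). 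I would then establish two endpoint bounds, interpolate via Riesz-Thorin, and extend by density.

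For the $p=2$ endpoint, the Plancherel identity (\ref{additive2.8}) yields $\|S(\xi)\|_{L^2(\mathbb{T}^d_\theta)}=\|\xi\|_{\ell^2(\mathbb{Z}^d)}$. For the $p=\infty$ endpoint, the triangle inequality in the von Neumann algebra norm combined with the unitarity bound (\ref{additive2.5}) gives
$$\|S(\xi)\|_{L^\infty(\mathbb{T}^d_\theta)}\leq\sum_{n\in\mathbb{Z}^d}|\xi(n)|\,\|e^\theta_n\|_{L^\infty(\mathbb{T}^d_\theta)}=\|\xi\|_{\ell^1(\mathbb{Z}^d)}.$$
Thus $S$ is a contraction from $\ell^{1}(\mathbb{Z}^d)$ into $L^\infty(\mathbb{T}^d_\theta)$ and an isometry from $\ell^2(\mathbb{Z}^d)$ into $L^2(\mathbb{T}^d_\theta)$.

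Applying the Riesz-Thorin interpolation theorem (as cited in the proof of Theorem \ref{thm3.1}) with $\eta=2/p$, so that $\frac{1}{p'}=\frac{1-\eta}{1}+\frac{\eta}{2}$ and $\frac{1}{p}=\frac{1-\eta}{\infty}+\frac{\eta}{2}$, produces $\|S(\xi)\|_{L^p(\mathbb{T}^d_\theta)}\leq\|\xi\|_{\ell^{p'}(\mathbb{Z}^d)}$ for every finitely supported $\xi$ and every $2\leq p\leq\infty$. Since the finitely supported sequences are dense in $\ell^{p'}(\mathbb{Z}^d)$ whenever $1\leq p'\leq 2$, $S$ extends uniquely to a bounded map $\ell^{p'}(\mathbb{Z}^d)\to L^p(\mathbb{T}^d_\theta)$ of norm at most one.

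The point that requires the most care is the identification of the interpolated element $x:=S(\xi)$ with the prescribed Fourier series, i.e., confirming $\widehat{x}(n)=\xi(n)$ for a general $\xi\in\ell^{p'}(\mathbb{Z}^d)$ rather than only on the dense subspace. This follows because each Fourier coefficient functional $x\mapsto\widehat{x}(n)=\tau_\theta(x(e^\theta_n)^*)$ is continuous on $L^p(\mathbb{T}^d_\theta)$: the non-commutative H\"older inequality together with (\ref{additive2.5}) yields $|\widehat{x}(n)|\leq\|x\|_{L^p(\mathbb{T}^d_\theta)}\,\|(e^\theta_n)^*\|_{L^{p'}(\mathbb{T}^d_\theta)}\leq\|x\|_{L^p(\mathbb{T}^d_\theta)}$, since $|e^\theta_n|=\mathbf{1}$ and $\tau_\theta(\mathbf{1})=1$. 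Passing to the limit in the identity $\widehat{S(\xi_k)}(n)=\xi_k(n)$ along an approximating sequence $\xi_k\to\xi$ in $\ell^{p'}(\mathbb{Z}^d)$ then completes the argument.
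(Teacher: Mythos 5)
Your proof is correct and follows essentially the same route as the paper: both define the synthesis map $\xi\mapsto\sum_n\xi(n)e^\theta_n$ on a dense subspace, establish the $\ell^1\to L^\infty$ contraction and the $\ell^2\to L^2$ isometry (via Plancherel/Riesz--Fischer), and conclude by Riesz--Thorin interpolation with the same exponent bookkeeping. Your explicit verification that $\widehat{x}(n)=\xi(n)$ survives the density extension is a point the paper glosses over, but it is a refinement of the same argument rather than a different approach.
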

\begin{proof}The idea of the proof is similar to that of Theorem \ref{thm3.1}. But, the main question is to define the operator correctly. Let us define the map $T_1$ by
$$T_1:\xi=\{\xi(n)\}_{n\in \mathbb{Z}^{d}}\mapsto x:= \sum\limits_{n\in\mathbb{Z}^d}\xi(n) e^\theta_n$$
which is well defined on $\ell^1(\mathbb{Z}^d)$ into $L^{\infty}(\mathbb{T}^d_\theta).$
On the other hand, Theorem \ref{R-F-thm} exhibits an isometry $T_2$ from $\ell^2(\mathbb{Z}^d)$ onto $L^2(\mathbb{T}^d_\theta)$ by $T_2 \xi=x$ with the property $\xi(n)=\widehat{x}(n)$ for all $n\in \mathbb{Z}^{d}.$ Therefore, if 
$\xi\in \ell^1(\mathbb{Z}^d),$ then $T_1 \xi$ and $T_2 \xi$ have the same Fourier coefficients and hence coincide. Since $\ell^1(\mathbb{Z}^d)$ is dense in $\ell^2(\mathbb{Z}^d),$ the operator $T_2$ is therefore the unique extension of $T_1$ to an isometry of $\ell^2(\mathbb{Z}^d)$ into $L^2(\mathbb{T}^d_\theta).$ This extension defines the Fourier coefficients with respect to $\mathbb{Z}^d.$ Interpolating as in Theorem \ref{thm3.1}, we obtain that if $\xi=\{\xi(n)\}_{n\in \mathbb{Z}^{d}}\in \ell^{p'}(\mathbb{Z}^d),$ then the operator $x$ with $\xi(n)=\widehat{x}(n)$ for all $n\in \mathbb{Z}^{d}$ belongs to $L^{p}(\mathbb{T}^d_\theta),$ and
$$\|x\|_{L^{p}(\mathbb{T}^d_\theta)}\leq \|\xi\|_{\ell^{p'}(\mathbb{Z}^d)}.$$
In other words, we obtain \eqref{additive3.1'}.
\end{proof}

\section{Hausdorff–Young–Paley Inequality}\label{sc4}

Here we give an analogue of the Hausdorff–Young–Paley inequality on $\mathbb{T}^d_\theta$ which is a generalization of the Hausdorff–Young inequality.
\begin{thm}\label{thm4.1} (Paley-type inequality). Let $1 < p \leq 2$ and let $h:\mathbb{Z}^d\to \mathbb{R}_{+}$ be a strictly positive function such that   
 \begin{eqnarray}\label{additive4.1}
 M_h:=\sup\limits_{\lambda>0}\lambda\sum\limits_{n\in\mathbb{Z}^d \atop  h(n)\geq\lambda}1<\infty.
 \end{eqnarray}
Then for any $x\in L^p(\mathbb{T}^d_\theta)$ we have the following estimate 
\begin{eqnarray}\label{additive4.2}
\left(\sum\limits_{n\in\mathbb{Z}^d} |\widehat{x}(n)|^p h^{2-p}(n) \right)^\frac{1}{p}\leq c_p M^\frac{2-p}{p}_h\|x\|_{L^p(\mathbb{T}^d_\theta)},
 \end{eqnarray}   
 where $c_p>0$ is a constant independent of $x.$
  \end{thm}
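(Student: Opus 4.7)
The plan is to recast the inequality as an $L^p \to L^p$ operator norm estimate into a weighted measure space on $\mathbb{Z}^d$, and then to obtain the bound by real interpolation between the endpoints $p=1$ and $p=2$. Equip $\mathbb{Z}^d$ with the measure $d\nu(n) = h(n)^2\, dc(n)$, where $dc$ denotes counting measure, and let $A$ be the linear map defined by $Ax(n) := \widehat{x}(n)/h(n)$. A direct computation gives $\|Ax\|_{L^p(\mathbb{Z}^d,\nu)}^p = \sum_{n}|\widehat{x}(n)|^p h(n)^{2-p}$, so inequality \eqref{additive4.2} is equivalent to the operator norm bound $\|A\|_{L^p(\mathbb{T}^d_\theta)\to L^p(\nu)}\lesssim M_h^{(2-p)/p}$.

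For the $p=2$ endpoint, the Plancherel identity \eqref{additive2.8} yields $\|Ax\|_{L^2(\nu)}^2 = \sum_n |\widehat{x}(n)|^2 = \|x\|_{L^2(\mathbb{T}^d_\theta)}^2$, so $A$ is bounded with norm $1$ (consistent with $M_h^0 = 1$). For the weak-type $(1,1)$ endpoint, the starting observation is the elementary bound $|\widehat{x}(n)| = |\tau_\theta(x(e^\theta_n)^*)|\leq \|x\|_{L^1(\mathbb{T}^d_\theta)}$, which follows from the non-commutative H\"older inequality together with $\|e^\theta_n\|_\infty = 1$ (Remark \ref{rem2.1}). Consequently $\{n : |\widehat{x}(n)|/h(n) > \lambda\}\subseteq \{n : h(n)<M\}$ with $M := \|x\|_{L^1(\mathbb{T}^d_\theta)}/\lambda$, and a layer-cake argument combined with \eqref{additive4.1} in the form $\#\{n: h(n) > s\}\leq M_h/s$ gives $\sum_{h(n)<M} h(n)^2 = 2\int_0^M s\cdot \#\{s < h(n)< M\}\,ds \leq 2\int_0^M s\cdot\frac{M_h}{s}\, ds = 2 M_h M$. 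This yields $\nu(\{n: |Ax(n)|>\lambda\})\leq 2 M_h \|x\|_{L^1(\mathbb{T}^d_\theta)}/\lambda$, i.e., weak type $(1,1)$ with constant $2 M_h$.

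Having the strong $(2,2)$ and weak $(1,1)$ bounds, I would then apply the Marcinkiewicz real interpolation theorem to the linear operator $A$, in the form valid for maps from a non-commutative $L^p$ space into a classical (commutative) measure-theoretic $L^p$ space; such a version is standard in the Pisier--Xu framework \cite{PXu}. The interpolation parameter $1-\theta=(2-p)/p$ then produces the strong $(p,p)$ estimate with operator norm $c_p M_h^{(2-p)/p}\cdot 1^{\theta} = c_p M_h^{(2-p)/p}$ for every $1 < p < 2$, which is precisely \eqref{additive4.2}.

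The main obstacle I anticipate is the bookkeeping in the weak-type $(1,1)$ step: one must extract exactly the factor $M_h$ (and not $M_h^2$) from the layer-cake representation, which is why the $L^1$ estimate $|\widehat{x}(n)|\leq \|x\|_{L^1(\mathbb{T}^d_\theta)}$ enters crucially to truncate the level set to $\{h<\|x\|_1/\lambda\}$ before the distributional hypothesis is applied. A secondary technical point is the correct invocation of real interpolation with a non-commutative domain, but this is routine given that the target is commutative.
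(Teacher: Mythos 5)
Your proposal is correct and follows essentially the same route as the paper's proof: the same weighted measure $\mu(n)=h^2(n)$, the same sublinear map $Ax(n)=\widehat{x}(n)/h(n)$, the Plancherel-based $(2,2)$ endpoint, the identical weak-$(1,1)$ argument via $|\widehat{x}(n)|\leq\|x\|_{L^1(\mathbb{T}^d_\theta)}$ and the layer-cake bound $\sum_{h(n)\leq v}h^2(n)\leq 2vM_h$, followed by Marcinkiewicz interpolation. The only cosmetic difference is that you record the strong $(2,2)$ bound where the paper states the weak one (which it derives from the same Plancherel computation via Chebyshev), so nothing of substance differs.
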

\begin{proof}Define a measure $\mu$ on $\mathbb{Z}^d$ such that 
$\mu(n):= h^2(n)>0$ for $n\in\mathbb{Z}^d$. Then it can be proved that the space $\ell^p(\mathbb{Z}^d, \mu)$ is a Banach space of all complex valued sequences $a = \{a(n)\}_{n\in\mathbb{Z}^d}$ with the norm
\begin{eqnarray*} 
\|a\|_{\ell^p(\mathbb{Z}^d, \mu)}:=\left(\sum\limits_{n\in\mathbb{Z}^d}|a(n)|^p h^2(n)\right)^\frac{1}{p}<\infty.
\end{eqnarray*}
Now, define an operator $A:L^p(\mathbb{T}^d_\theta)\to \ell^p(\mathbb{Z}^d, \mu)$ by the formula 
\begin{eqnarray*} 
Ax=\left\{\frac{\widehat{x}(n)}{h(n)}\right\}_{n\in\mathbb{Z}^d}.
\end{eqnarray*}
It is clear that $A$ is a sub-linear (or quasi-linear) operator. Next, we will show that $A$
is well-defined and bounded from $L^p(\mathbb{T}^d_\theta)$ to  $\ell^p(\mathbb{Z}^d, \mu)$ for $1 < p \leq 2$. In other words, we claim that we have the estimate (\ref{additive4.2}) with the condition (\ref{additive4.1}). First, we will show that $A$ is of weak type $(2,2)$ and of weak-type $(1,1)$. Recall that the distribution function $d_{Ax}(\lambda),$ $\lambda>0$ with respect
to the weight $h^2(n)>0$ is defined by
 \begin{eqnarray*} 
d_{Ax}(\lambda):=\mu\{n\in\mathbb{Z}^d: |(Ax)(n)|>\lambda\}=\sum\limits_{n\in\mathbb{Z}^d \atop |(Ax)(n)|>\lambda}h^2(n). 
\end{eqnarray*}

We show that 
\begin{eqnarray}\label{additive4.3}
d_{Ax}(\lambda)\leq c_2\frac{\|x\|^2_{L^2(\mathbb{T}^d_\theta)}}{\lambda^2 } \;\;\;\text{with}\;\;\;c_2=1,
\end{eqnarray}
 and 
\begin{eqnarray}\label{additive4.4}
d_{Ax}(\lambda)\leq \frac{c_1\|x\|_{L^1(\mathbb{T}^d_\theta)}}{\lambda}   \;\;\;\text{with}\;\;\;c_1=2M_h. 
\end{eqnarray}

Indeed,  we first prove inequality  (\ref{additive4.3}). It follows from the Chebyshev inequality and Plancherel identity (\ref{additive2.8}) that
 \begin{eqnarray*} 
\lambda^2d_{Ax}(\lambda)\leq \|Ax\|^2_{\ell^2(\mathbb{Z}^d, \mu)}
= \sum\limits_{n\in\mathbb{Z}^d} |\widehat{x}(n)|^2
\overset{ \text{(\ref{additive2.8}})}{=}\|x\|^2_{L^2(\mathbb{T}^d_\theta)}.
\end{eqnarray*} 
Thus, $A$ is of weak type $(2,2)$ with operator norm at most $c_2=1$.  
Now, let us show \eqref{additive4.4}. Applying formulas (\ref{additive2.5}) and (\ref{additive2.6}), and the non-commutative H\"{o}lder inequality, we obtain
\begin{eqnarray*}
\frac{|\widehat{x}(n)|}{h(n)} \overset{ \text{(\ref{additive2.6})}} {=} \frac{|\tau_\theta(x(e^\theta_n)^*)|}{h(n)}\leq\frac{\|x\|_{L^1(\mathbb{T}^d_\theta)}\|(e^\theta_n)^*\|_{L^\infty(\mathbb{T}^d_\theta)}}{h(n)}\overset{ \text{(\ref{additive2.5})}} {=}\frac{\|x\|_{L^1(\mathbb{T}^d_\theta)} }{h(n)}, \quad n\in\mathbb{Z}^d.
\end{eqnarray*} 
Therefore, we have
\begin{eqnarray*}
 \{n\in\mathbb{Z}^d: \frac{|\widehat{x}(n)|}{h(n)}>\lambda\} \subset
 \{n\in\mathbb{Z}^d: \frac{\|x\|_{L^1(\mathbb{T}^d_\theta)}}{h(n)}>\lambda\}
\end{eqnarray*} for any $\lambda>0.$
Consequently,
  \begin{eqnarray*}
 \mu\{n\in\mathbb{Z}^d: \frac{|\widehat{x}(n)|}{h(n)}>\lambda\} \leq
\mu\{n\in\mathbb{Z}^d: \frac{\|x\|_{L^1(\mathbb{T}^d_\theta)}}{h(n)}>\lambda\}
\end{eqnarray*}
for any $\lambda>0.$
Setting  $v:=\frac{\|x\|_{L^1(\mathbb{T}^d_\theta)}}{\lambda}$, we obtain 
\begin{equation}\label{additive4.5}
 \mu\{n\in\mathbb{Z}^d: \frac{|\widehat{x}(n)|}{h(n)}>\lambda\}
\leq\mu\{n\in\mathbb{Z}^d: \frac{\|x\|_{L^1(\mathbb{T}^d_\theta)}}{h(n)}>\lambda\}=\sum\limits_{n\in\mathbb{Z}^d \atop  h(n)\leq v}h^2(n).
\end{equation}
Let us estimate the right hand side. We now claim that
\begin{eqnarray}\label{additive4.6}
 \sum\limits_{n\in\mathbb{Z}^d \atop  h(n)\leq v}h^2(n)\leq 2v\cdot M_h .
\end{eqnarray}
Indeed, first we have
$$\sum\limits_{n\in\mathbb{Z}^d \atop  h(n)\leq v}h^2(n)=\sum\limits_{n\in\mathbb{Z}^d \atop  h(n)\leq v}\int\limits_0^{h^2(n)}dt\nonumber.$$
By interchanging the order of integration we obtain
$$\sum\limits_{n\in\mathbb{Z}^d \atop  h(n)\leq v}\int\limits_0^{h^2(n)}dt\nonumber=\int\limits_0^{v^2}dt\sum\limits_{n\in\mathbb{Z}^d \atop  t^\frac{1}{2}\leq h(n)\leq v}1\nonumber.$$
Further, by a substitution $t = s^2$ we have
$$\int\limits_0^{v^2}dt\sum\limits_{n\in\mathbb{Z}^d \atop  t^\frac{1}{2}\leq h(n)\leq v}1\nonumber=2\int\limits_0^{v}s ds\sum\limits_{n\in\mathbb{Z}^d \atop  s\leq h(n)\leq v}1\leq 2\int\limits_0^{v}s ds\sum\limits_{n\in\mathbb{Z}^d \atop  s\leq h(n)}1.$$
Since 
$$s \sum\limits_{n\in\mathbb{Z}^d \atop  s\leq h(n)}1\leq \sup_{s>0}s\sum\limits_{n\in\mathbb{Z}^d \atop  s\leq h(n)}1= M_{h},$$
and $M_{h}<\infty$ by assumption, it follows that
$$2\int\limits_0^{v}sds\sum\limits_{n\in\mathbb{Z}^d \atop  s\leq h(n)}1\leq 2 v\cdot M_{h}.$$
This proves the claim \eqref{additive4.6}.
Combining \eqref{additive4.5} and \eqref{additive4.6} we obtain (\ref{additive4.4}) which shows that $A$ is indeed of weak type $(1,1)$ with operator norm at most $c_1 = 2M_h$. Then, using the Marcinkiewicz interpolation theorem (see \cite[Theorem 1.3.1]{BL1976}, \cite[Theorem  5.2]{Dirk}) with $p_1 = 1,$ $p_2 = 2$
and $\frac{1}{p}=\frac{1-\eta}{1}+\frac{\eta}{2},$ we now obtain inequality (\ref{additive4.2}).  This completes the proof.  
\end{proof}
\begin{rem}Let $\theta=0$ and $d=1.$ If $h(n)=\frac{1}{1+|n|}, n\in\mathbb{Z},$ then $M_h<\infty$ in Theorem \ref{thm4.1}. Consequently, we obtain the classical Hardy-Littlewood inequality \cite{HL} (see also \cite[Theorem 2.1]{ANR}).
    
\end{rem}

As a consequence of the Paley-type inequality in Theorem \ref{thm4.1}, we obtain the following Hardy-Littlewood-type inequality on the non-commutative torus. We have very recently found, after we have proved, that this inequality indeed exists in \cite[Remark 7.3]{S-G} without proof. See also \cite{AMR}, where similar results are obtained for compact quantum groups. For the convenience of the reader, we give a new and simple proof, as a consequence of Theorem \ref{thm4.1}.

\begin{thm}\label{H-L_ineq} (Hardy-Littlewood-type inequality). Let $1 < p \leq 2$ and let $\varphi:\mathbb{Z}^d\to \mathbb{R}_{+}$ be a strictly positive
function such that 
$$\sum\limits_{n\in\mathbb{Z}^d}\frac{1}{\varphi^{\beta}(n)}<\infty \quad \text{for}\quad \text{some} \quad \beta>0.
$$
 Then for any $x\in L^p(\mathbb{T}^d_\theta)$ we have 
\begin{eqnarray}\label{H_L_ineq}
\left(\sum\limits_{n\in\mathbb{Z}^d} |\widehat{x}(n)|^p\varphi^{\beta(p-2)}(n) \right)^\frac{1}{p}\leq c_p \|x\|_{L^p(\mathbb{T}^d_\theta)},
 \end{eqnarray}   
 where $c_p>0$ is a constant independent of $x.$
  \end{thm}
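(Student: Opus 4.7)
The plan is to deduce Theorem \ref{H-L_ineq} directly from the Paley-type inequality (Theorem \ref{thm4.1}) by the right choice of weight $h$. We want the weight appearing in \eqref{additive4.2}, namely $h^{2-p}(n)$, to match $\varphi^{\beta(p-2)}(n) = \varphi^{-\beta(2-p)}(n)$, which forces the natural choice
$$h(n) := \varphi^{-\beta}(n), \qquad n\in\mathbb{Z}^d.$$
Since $\varphi>0$, this $h$ is strictly positive, as required.

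The only nontrivial step is to verify that this $h$ satisfies the Paley hypothesis \eqref{additive4.1}, i.e.\ that $M_h<\infty$. First I would rewrite the sub-level set in terms of $\varphi$: for $\lambda>0$,
$$\{n\in\mathbb{Z}^d : h(n)\geq\lambda\}=\{n\in\mathbb{Z}^d : \varphi^{-\beta}(n)\geq\lambda\},$$
and then apply the discrete Chebyshev (Markov) inequality to the non-negative summable sequence $\{\varphi^{-\beta}(n)\}_{n\in\mathbb{Z}^d}$:
$$\#\{n\in\mathbb{Z}^d:\varphi^{-\beta}(n)\geq\lambda\}\leq\frac{1}{\lambda}\sum_{n\in\mathbb{Z}^d}\frac{1}{\varphi^{\beta}(n)}.$$
Multiplying by $\lambda$ and taking the supremum in $\lambda>0$ gives
$$M_h=\sup_{\lambda>0}\lambda\sum_{n\in\mathbb{Z}^d,\, h(n)\geq\lambda}1\leq\sum_{n\in\mathbb{Z}^d}\frac{1}{\varphi^{\beta}(n)}<\infty,$$
by hypothesis.

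With $M_h<\infty$ in hand, Theorem \ref{thm4.1} applies and yields
$$\left(\sum_{n\in\mathbb{Z}^d}|\widehat{x}(n)|^{p}\varphi^{-\beta(2-p)}(n)\right)^{1/p}=\left(\sum_{n\in\mathbb{Z}^d}|\widehat{x}(n)|^{p}h^{2-p}(n)\right)^{1/p}\leq c_p M_h^{(2-p)/p}\|x\|_{L^p(\mathbb{T}^d_\theta)},$$
which is exactly \eqref{H_L_ineq} (absorbing $M_h^{(2-p)/p}$ into the constant $c_p$, since it depends only on $p$, $\beta$ and $\varphi$, not on $x$). Since the only real step is recognizing the correct $h$ and estimating the distribution count, there is essentially no obstacle; the work is bookkeeping to match exponents. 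The one small point to be careful about is that the constant $c_p$ in the final statement is allowed to depend on $\varphi$ and $\beta$ through $M_h$, which is standard in this type of Hardy--Littlewood estimate.
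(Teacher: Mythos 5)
Your proposal is correct and follows essentially the same route as the paper: both take $h(n)=\varphi^{-\beta}(n)$, verify $M_h\leq\sum_{n}\varphi^{-\beta}(n)<\infty$ by the Chebyshev/Markov bound on the level sets, and then invoke the Paley-type inequality of Theorem \ref{thm4.1}. The exponent bookkeeping $h^{2-p}=\varphi^{\beta(p-2)}$ and the absorption of $M_h^{(2-p)/p}$ into the constant are exactly as in the paper's argument.
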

  \begin{proof}By assumption, we have
  $$C:=\sum\limits_{n\in\mathbb{Z}^d}\frac{1}{\varphi^{\beta}(n)}<\infty \quad \text{for}\quad \text{some} \quad \beta> 0.$$
Thus,
$$C\geq \sum\limits_{n\in\mathbb{Z}^d \atop \varphi^{\beta}(n)\leq \frac{1}{s}}\frac{1}{\varphi^{\beta}(n)}\geq s\sum\limits_{n\in\mathbb{Z}^d \atop \varphi^{\beta}(n)\leq \frac{1}{s}}1=s\sum\limits_{n\in\mathbb{Z}^d \atop s\leq \frac{1}{\varphi^{\beta}(n)}}1,$$
where $s>0.$
Hence, taking the supremum with respect to $\lambda>0,$ we obtain
$$\sup_{s>0}s\sum\limits_{n\in\mathbb{Z}^d \atop s\leq \frac{1}{\varphi^{\beta}(n)}}1\leq C<\infty.$$
Then, applying Theorem \ref{thm4.1} for the function $h(n)=\frac{1}{\varphi^{\beta}(n)},$ $n\in\mathbb{Z}^{d},$
we obtain the desired result.
  \end{proof}
It can be seen that Theorem \ref{H-L_ineq} shows a necessary condition for the operator $x$ to belong to $L^p(\mathbb{T}^{d}_{\theta})$ for $1<p\leq2.$ However, this is not a sufficient condition. Thus, there is a natural question of finding a sufficient condition for $x$ (in terms of its Fourier coefficients) to belong to $L^p(\mathbb{T}^{d}_{\theta}).$  The following result answers this question.
\begin{thm}\label{R-H-L_ineq} (Inverse Hardy-Littlewood-type inequality). Let $2 \leq p <\infty$ be with $\frac{1}{p}+\frac{1}{p'}=1$ and let $\varphi:\mathbb{Z}^d\to \mathbb{R}_{+}$ be a strictly positive
function such that 
$$\sum\limits_{n\in\mathbb{Z}^d}\frac{1}{\varphi^{\beta}(n)}<\infty \quad \text{for}\quad \text{some} \quad \beta> 0.
$$ 
If $\sum\limits_{n\in\mathbb{Z}^d}\varphi^{\frac{\beta p(2-p')}{p'}}(n)|\widehat{x}(n)|^p<\infty,
$ then $x\in L^p(\mathbb{T}^{d}_{\theta})$ and we have
\begin{eqnarray}\label{R_H_L_ineq}
\|x\|_{L^p(\mathbb{T}^d_\theta)}\leq C_p \left(\sum\limits_{n\in\mathbb{Z}^d} \varphi^{\frac{\beta p(2-p')}{p'}}(n)|\widehat{x}(n)|^p \right)^\frac{1}{p},
 \end{eqnarray}   
   where $C_p>0$ is a constant independent of $x.$
  \end{thm}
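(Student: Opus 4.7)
The plan is to deduce this by duality from the Hardy--Littlewood inequality of Theorem \ref{H-L_ineq} applied at the conjugate exponent $p'$ (for which $1 < p' \leq 2$). The algebraic backbone is the pairing identity
$$\tau_\theta(x y^*) = \sum_{n \in \mathbb{Z}^d} \widehat{x}(n)\,\overline{\widehat{y}(n)},$$
valid for polynomials $x,y \in C^\infty(\mathbb{T}^d_\theta)$; this follows from the orthogonality relations \eqref{additive2.3}--\eqref{additive2.4} and already appeared in the proof of Lemma \ref{Duality}.

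First I would reduce to the case where $x$ is a polynomial, say $x = \sum_{|n|\leq N} \widehat{x}(n) e^\theta_n$. The $L^p$--$L^{p'}$ trace duality on $(L^\infty(\mathbb{T}^d_\theta), \tau_\theta)$, together with the density of $C^\infty(\mathbb{T}^d_\theta)$ in $L^{p'}(\mathbb{T}^d_\theta)$ used already in Lemma \ref{Duality}, yields
$$\|x\|_{L^p(\mathbb{T}^d_\theta)} = \sup\bigl\{\,|\tau_\theta(x y^*)| : y \in C^\infty(\mathbb{T}^d_\theta),\ \|y\|_{L^{p'}(\mathbb{T}^d_\theta)} \leq 1\,\bigr\}.$$
Setting $\alpha := \beta(2-p')/p' \geq 0$ and inserting the factor $\varphi^{\alpha}(n)\varphi^{-\alpha}(n)=1$ into the pairing, the scalar Hölder inequality on $\ell^p(\mathbb{Z}^d)$--$\ell^{p'}(\mathbb{Z}^d)$ gives
$$|\tau_\theta(x y^*)| \leq \left(\sum_{n\in\mathbb{Z}^d} \varphi^{p\alpha}(n)|\widehat{x}(n)|^p\right)^{1/p}\left(\sum_{n\in\mathbb{Z}^d} \varphi^{-p'\alpha}(n)|\widehat{y}(n)|^{p'}\right)^{1/p'}.$$
The exponent in the first factor evaluates to $p\alpha = \beta p(2-p')/p'$, which is the weight appearing in the statement, while in the second it becomes $-p'\alpha = \beta(p'-2)$, which is exactly the weight in Theorem \ref{H-L_ineq} at index $p'$. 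Applying that theorem to $y$ controls the second factor by $c_{p'}\|y\|_{L^{p'}(\mathbb{T}^d_\theta)} \leq c_{p'}$, and taking the supremum over $y$ delivers \eqref{R_H_L_ineq} for polynomial $x$.

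To upgrade this estimate to an arbitrary $x$ whose Fourier sequence satisfies the summability hypothesis, I would apply the polynomial estimate to the partial-sum differences $x_N - x_M$, where $x_N := \sum_{|n|\leq N} \widehat{x}(n) e^\theta_n$. The hypothesis then forces $\{x_N\}$ to be Cauchy in $L^p(\mathbb{T}^d_\theta)$, producing a limit $x \in L^p(\mathbb{T}^d_\theta)$ that satisfies \eqref{R_H_L_ineq}. Identifying the Fourier coefficients of this limit with the prescribed sequence exploits that the summability hypothesis together with $\sum_n \varphi^{-\beta}(n) < \infty$ (which forces $\varphi^{-\beta}$ bounded, hence $\sum_n \varphi^{-2\beta}(n) < \infty$) also places $\{\widehat{x}(n)\}$ in $\ell^2(\mathbb{Z}^d)$ via one further Hölder step; by Plancherel \eqref{additive2.8}, $\{x_N\}$ then converges in $L^2(\mathbb{T}^d_\theta)$ as well, so the Fourier coefficients of the limit coincide with the original $\widehat{x}(n)$. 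The main technical obstacle is this density/identification step: it requires combining the continuity of the Fourier coefficients on $L^1(\mathbb{T}^d_\theta)$ with the embedding $L^p(\mathbb{T}^d_\theta) \hookrightarrow L^1(\mathbb{T}^d_\theta)$ (valid because $\tau_\theta$ is a finite trace) to reconcile the $L^p$-limit with the $L^2$-limit and thereby confirm that the prescribed coefficients are those of a single element of $L^p(\mathbb{T}^d_\theta)$.
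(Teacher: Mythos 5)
Your argument is correct and follows essentially the same route as the paper: dualize $\|x\|_{L^p}$ against the unit ball of $L^{p'}$, use the Parseval pairing $\tau_\theta(xy^*)=\sum_n\widehat{x}(n)\overline{\widehat{y}(n)}$, split the weight as $\varphi^{\alpha}\varphi^{-\alpha}$ with $\alpha=\beta(2-p')/p'$, apply scalar H\"older, and control the $y$-factor by Theorem \ref{H-L_ineq} at the exponent $p'$ (your exponent bookkeeping matches the paper's). The only difference is that you additionally carry out the polynomial-approximation and Fourier-coefficient identification step to justify that $x\in L^p(\mathbb{T}^d_\theta)$, a point the paper's proof treats implicitly.
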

  \begin{proof}By duality of $L^p(\mathbb{T}^{d}_{\theta})$ we have
  $$\|x\|_{L^p(\mathbb{T}^{d}_{\theta})}=\sup\{|\tau_{\theta}(xy^*)|:y\in L^{p'}(\mathbb{T}^{d}_{\theta}),\quad
  \|y\|_{L^{p'}(\mathbb{T}^{d}_{\theta})}=1\}.$$
  By \eqref{Plancerel-relation}, we have
 \begin{eqnarray*}\label{Parseval-relation}
  \tau_{\theta}(xy^*)=\sum_{n\in\mathbb{Z}^d}\widehat{x}(n)\overline{\widehat{y}(n)}, \,\ x\in L^p(\mathbb{T}^{d}_{\theta}), \,\ y\in L^{p'}(\mathbb{T}^{d}_{\theta}).
  \end{eqnarray*}
  Since $|\widehat{x}(n)\overline{\widehat{y}(n)}|\leq |\widehat{x}(n)||\overline{\widehat{y}(n)}|$ for all $n\in\mathbb{Z}^d,$ applying the H\"{o}lder inequality for any $y\in L^{p'}(\mathbb{T}^{d}_{\theta})$ with $\|y\|_{L^{p'}(\mathbb{T}^{d}_{\theta})}=1,$ we obtain
  \begin{eqnarray*}
\|x\|_{L^p(\mathbb{T}^d_\theta)}&=&\sup\{|\tau_{\theta}(xy^*)|:y\in L^{p'}(\mathbb{T}^{d}_{\theta}),\quad
  \|y\|_{L^{p'}(\mathbb{T}^{d}_{\theta})}=1\}\\
&=&\sup\left\{\left|\sum_{n\in\mathbb{Z}^d}\widehat{x}(n)\overline{\widehat{y}(n)}\right|:y\in L^{p'}(\mathbb{T}^{d}_{\theta}),\quad
  \|y\|_{L^{p'}(\mathbb{T}^{d}_{\theta})}=1\right\}\\
&\leq &\sup\left\{\sum_{n\in\mathbb{Z}^d}|\widehat{x}(n)||\overline{\widehat{y}(n)}|:y\in L^{p'}(\mathbb{T}^{d}_{\theta}),\quad
  \|y\|_{L^{p'}(\mathbb{T}^{d}_{\theta})}=1\right\}\\
&=&\sup\left\{\sum_{n\in\mathbb{Z}^d}|\widehat{x}(n)||\widehat{y}(n)|:y\in L^{p'}(\mathbb{T}^{d}_{\theta}),\quad
  \|y\|_{L^{p'}(\mathbb{T}^{d}_{\theta})}=1\right\} \\
  &=&\sup\left\{\sum_{n\in\mathbb{Z}^d}\varphi^{\frac{\beta (2-p')}{p'}}(n)|\widehat{x}(n)|\cdot\varphi^{\frac{\beta (p'-2)}{p'}}(n)|\widehat{y}(n)|:y\in L^{p'}(\mathbb{T}^{d}_{\theta}),\quad
  \|y\|_{L^{p'}(\mathbb{T}^{d}_{\theta})}=1\right\} \\
  &\leq&\sup_{y\in L^{p'}(\mathbb{T}^{d}_{\theta})\atop
  \|y\|_{L^{p'}(\mathbb{T}^{d}_{\theta})}=1}\left\{\left(\sum_{n\in\mathbb{Z}^d}\varphi^{\frac{\beta p(2-p')}{p'}}(n)|\widehat{x}(n)|^p\right)^{1/p}\cdot\left(\sum_{n\in\mathbb{Z}^d}\varphi^{\beta (p'-2)}(n)|\widehat{y}(n)|^{p'}\right)^{1/p'}
 \right\}.
\end{eqnarray*} 
Now applying Theorem \ref{H-L_ineq} with respect to $p',$ we get 
 \begin{eqnarray*}
\|x\|_{L^p(\mathbb{T}^d_\theta)}
  &\leq&\sup_{y\in L^{p'}(\mathbb{T}^{d}_{\theta})\atop
  \|y\|_{L^{p'}(\mathbb{T}^{d}_{\theta})}=1}\left\{\left(\sum_{n\in\mathbb{Z}^d}\varphi^{\frac{\beta p(2-p')}{p'}}(n)|\widehat{x}(n)|^p\right)^{1/p}\cdot\left(\sum_{n\in\mathbb{Z}^d}\varphi^{\beta (p'-2)}(n)|\widehat{y}(n)|^{p'}\right)^{1/p'}
 \right\}\\
 &\leq c_{p'} &\left(\sum_{n\in\mathbb{Z}^d}\varphi^{\frac{\beta p(2-p')}{p'}}(n)|\widehat{x}(n)|^p\right)^{1/p}\cdot\sup_{y\in L^{p'}(\mathbb{T}^{d}_{\theta})\atop
  \|y\|_{L^{p'}(\mathbb{T}^{d}_{\theta})}=1}\|y\|_{L^{p'}(\mathbb{T}^{d}_{\theta})}.
 \end{eqnarray*} 
Since $\|y\|_{L^{p'}(\mathbb{T}^{d}_{\theta})}=1,$ taking $C_p=c_{p'},$ we obtain the desired result.
  \end{proof}
\begin{rem} If $p=2,$ then both of the statements in Theorems \ref{H-L_ineq} and \ref{R-H-L_ineq} reduce to the Plancherel identity \eqref{additive2.8}.    
\end{rem}

The following result can be inferred from \cite[Corollary 5.5.2, p. 120]{BL1976} (or see \cite[Theorem  5.2]{Dirk} and \cite[Theorem 7.13.2, p. 511]{DdePS}).
\begin{prop}\label{weithg_inter}
  Let $ d\mu_1(n) = \mu_1(n)d\nu(n)$, $d\mu_2(n)=\mu_2(n)d\nu(n), n\in\mathbb{Z}^{d}.$ Suppose that $1\leq p,q_0, q_1 < \infty.$ If a continuous linear operator $A$ admits bounded extensions $A:L^p(\mathbb{T}^{d}_{\theta}) \rightarrow
\ell^{q_0} (\mathbb{Z}^{d},\mu_1)$  and $A:L^p(\mathbb{T}^{d}_{\theta}) \rightarrow \ell^{q_1}(\mathbb{Z}^{d},\mu_2),$ then there exists a bounded extension $A:L^p(\mathbb{T}^{d}_{\theta})\rightarrow  \ell^q(\mathbb{Z}^{d},\mu)$ where $0 < \eta < 1,$ $\frac{1}{q}=\frac{1-\eta }{q_0}+\frac{\eta}{q_1}$ and $\mu=\mu_1^\frac{q(1-\eta)}{q_0}\cdot\mu_2^\frac{q\eta }{q_1}$.
\end{prop}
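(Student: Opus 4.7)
The proposition is a weighted interpolation statement for linear operators whose source is a fixed non-commutative $L^p$ space and whose targets are weighted sequence spaces. My plan is to reduce it to the classical Stein-Weiss weighted interpolation theorem, which is exactly the content of \cite[Corollary 5.5.2]{BL1976}, combined with the standard complex (or real) interpolation theorem for bounded linear operators acting on compatible couples.

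First, I would view the source couple as the trivial pair $\bigl(L^p(\mathbb{T}^d_\theta),L^p(\mathbb{T}^d_\theta)\bigr)$. Since both endpoints coincide, the complex interpolation space at any parameter $\eta\in(0,1)$ is simply $L^p(\mathbb{T}^d_\theta)$ itself; no non-commutative interpolation machinery beyond this trivial fact is needed, so the fact that $L^p(\mathbb{T}^d_\theta)$ is a non-commutative $L^p$ space plays no role in the argument. Next, on the target side I consider the compatible couple $\bigl(\ell^{q_0}(\mathbb{Z}^d,\mu_1),\ell^{q_1}(\mathbb{Z}^d,\mu_2)\bigr)$. Because the two weights $\mu_1(n)$ and $\mu_2(n)$ are densities with respect to a common underlying measure $\nu$ on $\mathbb{Z}^d$, this is precisely the setting of the Stein-Weiss theorem \cite[Corollary 5.5.2]{BL1976}, which identifies the complex interpolation space as
\begin{equation*}
\bigl[\ell^{q_0}(\mathbb{Z}^d,\mu_1),\,\ell^{q_1}(\mathbb{Z}^d,\mu_2)\bigr]_{\eta} \;=\; \ell^q(\mathbb{Z}^d,\mu),
\end{equation*}
with the exact exponent $\frac{1}{q}=\frac{1-\eta}{q_0}+\frac{\eta}{q_1}$ and weight $\mu=\mu_1^{q(1-\eta)/q_0}\cdot\mu_2^{q\eta/q_1}$ appearing in the statement.

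Finally, I would invoke the interpolation theorem for linear operators between compatible couples: an operator $A$ that is bounded from $L^p(\mathbb{T}^d_\theta)$ into each of $\ell^{q_0}(\mathbb{Z}^d,\mu_1)$ and $\ell^{q_1}(\mathbb{Z}^d,\mu_2)$ extends continuously to a bounded operator from the interpolation space on the source to the interpolation space on the target. Combined with the two identifications above, this yields exactly the desired bounded extension $A:L^p(\mathbb{T}^d_\theta)\to \ell^q(\mathbb{Z}^d,\mu)$.

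I do not expect a real obstacle here: the only potentially delicate point is verifying that the Stein-Weiss formula genuinely applies to the discrete measure space $(\mathbb{Z}^d,\nu)$ with densities $\mu_1,\mu_2$, but this is precisely the framework in which \cite[Corollary 5.5.2]{BL1976} is formulated (any $\sigma$-finite measure space is allowed). If one prefers to avoid complex interpolation altogether, an alternative is to run a direct real-interpolation argument à la Marcinkiewicz as in the proof of Theorem \ref{thm4.1}, using Chebyshev's inequality weighted by $\mu_1$ and $\mu_2$, but the complex interpolation route via \cite{BL1976} is shorter and already provides the sharp weight formula.
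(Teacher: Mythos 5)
Your proposal is correct and follows exactly the route the paper intends: the paper gives no proof of Proposition \ref{weithg_inter} beyond the citation of \cite[Corollary 5.5.2]{BL1976}, and your argument (trivial couple on the source side, Stein--Weiss identification of the interpolated weighted sequence space on the target side, then the interpolation theorem for bounded linear operators) is precisely the standard way to infer the statement from that reference, with the correct exponent and weight formulas.
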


By using this interpolation result between the Paley-type inequality (\ref{additive4.2})  in Theorem \ref{thm4.1} and the Hausdorff-Young inequality (\ref{additive3.1}) in Theorem \ref{thm3.2}, we obtain the following  result. 

\begin{thm}\label{thm4.2}(Hausdorff–Young–Paley inequality). Let $1 < p \leq r \leq p' < \infty$ with $\frac{1}{p}+\frac{1}{p'}=1.$ Let $h$ be given as in Theorem \ref{thm4.1}. Then we have
\begin{eqnarray}\label{additive4.7}
\left(\sum\limits_{n\in\mathbb{Z}^d} \Big(|\widehat{x}(n)|h(n)^{\frac{1}{r}-\frac{1}{p'}} \Big)^r \right)^\frac{1}{r}\leq c_{p,r,p'} M^{\frac{1}{r}-\frac{1}{p'}}_h\|x\|_{L^p(\mathbb{T}^d_\theta)},
 \end{eqnarray}  
 where $c_{p,r,p'}>0$ is a constant independent of $x.$
\end{thm}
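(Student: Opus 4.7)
The plan is to apply the weighted interpolation of Proposition \ref{weithg_inter} to the linear Fourier coefficient operator $A:x\mapsto\{\widehat{x}(n)/h(n)\}_{n\in\mathbb{Z}^d}$ already introduced in the proof of Theorem \ref{thm4.1}, interpolating between the Paley-type inequality and the Hausdorff-Young inequality. The observation driving this is that both endpoint results can be recast as boundedness of the \emph{same} operator $A$, merely with different weighted $\ell^q$-spaces on $\mathbb{Z}^d$, so Proposition \ref{weithg_inter} is directly applicable.

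Concretely, let $\nu$ denote the counting measure on $\mathbb{Z}^d$, and introduce $d\mu_1(n)=h^2(n)\,d\nu(n)$ and $d\mu_2(n)=h^{p'}(n)\,d\nu(n)$. Then Theorem \ref{thm4.1} takes the form
\[
\|Ax\|_{\ell^p(\mathbb{Z}^d,\mu_1)}=\Bigl(\sum_{n\in\mathbb{Z}^d}|\widehat{x}(n)|^p h^{2-p}(n)\Bigr)^{1/p}\leq c_p M_h^{(2-p)/p}\,\|x\|_{L^p(\mathbb{T}^d_\theta)},
\]
so $A:L^p(\mathbb{T}^d_\theta)\to\ell^p(\mathbb{Z}^d,\mu_1)$ is bounded, while Theorem \ref{thm3.1} yields
\[
\|Ax\|_{\ell^{p'}(\mathbb{Z}^d,\mu_2)}=\Bigl(\sum_{n\in\mathbb{Z}^d}|\widehat{x}(n)|^{p'}\Bigr)^{1/p'}=\|\widehat{x}\|_{\ell^{p'}(\mathbb{Z}^d)}\leq\|x\|_{L^p(\mathbb{T}^d_\theta)},
\]
so $A:L^p(\mathbb{T}^d_\theta)\to\ell^{p'}(\mathbb{Z}^d,\mu_2)$ is bounded with operator norm at most $1$.

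Now given $p\leq r\leq p'$, I would choose $\eta\in[0,1]$ via $\frac{1}{r}=\frac{1-\eta}{p}+\frac{\eta}{p'}$ and apply Proposition \ref{weithg_inter} with $q_0=p$, $q_1=p'$. The hypotheses are satisfied since $1<p\leq 2\leq p'<\infty$ and $A$ is linear, yielding
\[
A:L^p(\mathbb{T}^d_\theta)\to\ell^r(\mathbb{Z}^d,\mu),\qquad \mu=\mu_1^{r(1-\eta)/p}\mu_2^{r\eta/p'}=h^{\,2r(1-\eta)/p+r\eta}\,d\nu,
\]
with operator norm at most $c_p^{1-\eta}M_h^{(2-p)(1-\eta)/p}$.

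Finally, unwinding $\|Ax\|_{\ell^r(\mu)}$ turns the interpolated bound into
\[
\Bigl(\sum_{n\in\mathbb{Z}^d}|\widehat{x}(n)|^r h(n)^{-r+2r(1-\eta)/p+r\eta}\Bigr)^{1/r}\leq c_p^{1-\eta}M_h^{(2-p)(1-\eta)/p}\,\|x\|_{L^p(\mathbb{T}^d_\theta)},
\]
and the elementary identity $-1+2(1-\eta)/p+\eta=(1-\eta)(2-p)/p$, combined with $\frac{1}{r}-\frac{1}{p'}=(1-\eta)\bigl(\frac{1}{p}-\frac{1}{p'}\bigr)=(1-\eta)\frac{2-p}{p}$ (both using $\frac{1}{p}+\frac{1}{p'}=1$), shows that the exponent of $h(n)$ equals $r\bigl(\frac{1}{r}-\frac{1}{p'}\bigr)$ and the exponent of $M_h$ equals $\frac{1}{r}-\frac{1}{p'}$; setting $c_{p,r,p'}=c_p^{1-\eta}$ then recovers \eqref{additive4.7} exactly. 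I do not anticipate any substantive obstacle: once the endpoint operators are recognised as identical (so that Proposition \ref{weithg_inter} can be invoked rather than a more elaborate complex-interpolation argument), only the exponent bookkeeping needs to be checked.
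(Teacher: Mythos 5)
Your proposal is correct and follows essentially the same route as the paper: interpolation via Proposition \ref{weithg_inter} between the Paley-type inequality of Theorem \ref{thm4.1} and the Hausdorff--Young inequality of Theorem \ref{thm3.1}, with the same exponent bookkeeping. The only (cosmetic) difference is that you keep the operator $x\mapsto\{\widehat{x}(n)/h(n)\}$ and absorb the division into the weights $\mu_1=h^2$, $\mu_2=h^{p'}$, whereas the paper interpolates the bare coefficient map $x\mapsto\{\widehat{x}(n)\}$ with weights $h^{2-p}$ and $1$; the two parametrizations produce identical conclusions.
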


\begin{proof}As in Theorem \ref{thm3.1}, let us consider a linear operator $ Ax:= \{\widehat{x}(n)\}_{n\in\mathbb{Z}^d}$ on $L^p(\mathbb{T}^d_\theta).$ Hence, applying the Paley-type inequality (\ref{additive4.2}) with $1<p\leq 2,$ we obtain
\begin{eqnarray}\label{additive4.8}
\left(\sum\limits_{n\in\mathbb{Z}^d} |\widehat{x}(n)|^p h^{2-p}(n) \right)^\frac{1}{p}\leq c_p M^\frac{2-p}{p}_h\|x\|_{L^p(\mathbb{T}^d_\theta)}.
 \end{eqnarray} 
 In other words, $A$ is bounded from $L^p(\mathbb{T}^d_\theta)$ to the weighted space $\ell^{p}(\mathbb{Z}^d,\mu_1)$
 with the weight $\mu_1(n):= h^{2-p}(n)>0$ for $n\in\mathbb{Z}^d$.
On the other hand, by the Hausdorff-Young inequality (\ref{additive3.1}) in Theorem \ref{thm3.1}, we have
\begin{eqnarray*}
\| \widehat{x}\|_{\ell^{p'}(\mathbb{Z}^d)}\leq \|x\|_{L^p(\mathbb{T}^d_\theta)},
\end{eqnarray*} 
where $1\leq{p}\leq2$ with $\frac{1}{p}+\frac{1}{p'}=1$.
This shows that $A$ is bounded from $L^p(\mathbb{T}^d_\theta)$ to $\ell^{p'}(\mathbb{Z}^d,\mu_2),$
where $\mu_2(n):=1$ for all $n\in \mathbb{Z}^d.$ By Proposition \ref{weithg_inter} we infer that $A:L^p(\mathbb{T}^d_\theta)\rightarrow \ell^r(\mathbb{Z}^d, \mu)$ is bounded for any $r$ such that $p \leq r \leq p',$  where the space $\ell^r(\mathbb{Z}^d, \mu)$ is defined as the space of sequences $a=\{a(n)\}_{n\in\mathbb{Z}^{d}}$ with the norm 
$$\|a\|_{\ell^r(\mathbb{Z}^d, \mu)}:=\left(\sum\limits_{n\in\mathbb{Z}^d} |a(n)|^r \mu(n) \right)^\frac{1}{r}$$
and $\mu$ is a positive sequence over $\mathbb{Z}^{d}$ to be determined. Let us compute $\mu$ in our setting. Indeed, fix $\eta\in (0,1)$ such that $\frac{1}{r}=\frac{1-\eta}{p}+\frac{\eta}{p'}.$ Then we have that $\eta=\frac{p-r}{r(p-2)}$ and by Proposition \ref{weithg_inter} with respect to $q=r,$ $q_0=p,$ and $q_1=p',$ we obtain 
\begin{eqnarray*}\label{weight}
\mu(n)=(\mu_{1}(n))^{\frac{r(1-\eta)}{p}}\cdot(\mu_{2}(n))^{\frac{r\eta}{p'}}=(h^{2-p}(n))^{\frac{r(1-\eta)}{p}}\cdot1^{\frac{r\eta}{p'}}=h^{1-\frac{r}{p'}}(n) 
\end{eqnarray*}
for all $n\in \mathbb{Z}^{d}$ and $\frac{2-p}{p}\cdot (1-\eta)=\frac{1}{r}-\frac{1}{p'}.$
Thus, 
$$\|A(x)\|_{\ell^{r}(\mathbb{Z}^d, \mu)}\leq c_{p,r} (M^\frac{2-p}{p}_h)^{1-\eta}\|x\|_{L^p(\mathbb{T}^d_\theta)}=  M^{\frac{1}{r}-\frac{1}{p'}}_h  \|x\|_{L^p(\mathbb{T}^d_\theta)}, \quad x\in L^p(\mathbb{T}^d_\theta).$$
This completes the proof.
\end{proof}

\section{H\"ormander multiplier theorem on quantum tori}\label{sc5}

In this section, we are concerned with the question of what assumptions on the symbol $g:\mathbb{Z}^d\to \mathbb{C}$ and parameters $1\leq p,q \leq \infty$ guarantee that $T_g$ (see \eqref{additive2.9}) has a bounded extension from $L^p (\mathbb{T}^d_\theta )$  to $L^q (\mathbb{T}^d_\theta ).$ The following result is a non-commutative analogue of the H\"ormander's result \cite[Theorem 1.11]{Hor} (see also \cite[p. 303]{E} for periodic version of the H\"ormander's theorem) on $\mathbb{T}^d_\theta.$

\begin{thm}\label{thm5.1}(H\"ormander multiplier theorem). Let $1 < p \leq 2 \leq q < \infty$ and let $g:\mathbb{Z}^d\to \mathbb{C}$ be a measurable function satisfying $$\sup\limits_{\lambda>0}\lambda\left(\sum\limits_{n\in\mathbb{Z}^d \atop  |g(n)|\geq\lambda}1\right)^{\frac{1}{p}-\frac{1}{q}}<\infty.$$ 
Then Fourier multiplier $T_g$ with the symbol $g$ has a bounded extension from $L^p(\mathbb{T}^d_\theta)$ to $L^q(\mathbb{T}^d_\theta),$ and we have
\begin{eqnarray}\label{additive5.1}
\|T_g\|_{L^p(\mathbb{T}^d_\theta) \rightarrow L^q(\mathbb{T}^d_\theta)}\leq c_{p,q} \sup\limits_{\lambda>0}\lambda\left(\sum\limits_{n\in\mathbb{Z}^d \atop  |g(n)|\geq\lambda}1\right)^{\frac{1}{p}-\frac{1}{q}},
 \end{eqnarray} 
 where $c_{p,q}>0$ is a constant independent of $x.$
\end{thm}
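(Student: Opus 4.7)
The plan is to combine three ingredients already established in the paper: the reverse Hausdorff--Young inequality (Theorem \ref{R-H-Y_ineq}) to pass from the $L^q$-norm on the quantum torus to an $\ell^{q'}$-norm of Fourier coefficients, the Hausdorff--Young--Paley inequality (Theorem \ref{thm4.2}) to absorb the symbol $g$ as a weight, and the multiplier duality from Lemma \ref{Duality} to extend from a restricted range of indices to the full range. Write $s := \frac{1}{p} - \frac{1}{q} \geq 0$ and $N(\lambda) := \#\{n \in \mathbb{Z}^d : |g(n)| \geq \lambda\}$, so the target is
\begin{equation*}
\|T_g x\|_{L^q(\mathbb{T}^d_\theta)} \lesssim K\,\|x\|_{L^p(\mathbb{T}^d_\theta)}, \qquad K := \sup_{\lambda>0}\lambda\, N(\lambda)^{\,s}.
\end{equation*}

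First I would treat the sub-case $\frac{1}{p}+\frac{1}{q}\geq 1$. Combined with $p \leq 2 \leq q$, this gives the chain $p \leq q' \leq p'$. Since $q \geq 2$ and $\widehat{T_g x}(n)=g(n)\widehat{x}(n)$ by \eqref{additive2.9}, Theorem \ref{R-H-Y_ineq} gives
\begin{equation*}
\|T_g x\|_{L^q(\mathbb{T}^d_\theta)} \leq \Bigl(\sum_{n\in\mathbb{Z}^d} |g(n)|^{q'}\,|\widehat{x}(n)|^{q'}\Bigr)^{1/q'}.
\end{equation*}
Now I choose the weight $h(n) := |g(n)|^{1/s}$ on $\{n : g(n)\neq 0\}$ (positions where $g$ vanishes contribute nothing). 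A short exponent calculation gives $\frac{1}{q'}-\frac{1}{p'}=s$ and $\frac{1}{s}=\frac{pq}{q-p}=\frac{q'p'}{p'-q'}$, so in particular $h^{1/q'-1/p'}=|g|$. The hypotheses of Theorem \ref{thm4.2} with $r=q'$ are satisfied, and applying it yields
\begin{equation*}
\Bigl(\sum_{n\in\mathbb{Z}^d} |g(n)|^{q'}\,|\widehat{x}(n)|^{q'}\Bigr)^{1/q'} \leq c_{p,q}\, M_h^{\,s}\,\|x\|_{L^p(\mathbb{T}^d_\theta)}.
\end{equation*}
It remains to identify $M_h$ with $K^{1/s}$: the change of variable $\lambda \mapsto \lambda^s$ in \eqref{additive4.1} gives $M_h = \sup_{\lambda>0}\lambda^{1/s}N(\lambda)$, and inverting $K \geq \lambda N(\lambda)^{s}$ yields $N(\lambda) \leq (K/\lambda)^{1/s}$, hence $M_h \leq K^{1/s}$ and therefore $M_h^{\,s}\leq K$. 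This closes this sub-case.

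For the complementary sub-case $\frac{1}{p}+\frac{1}{q}<1$, I would invoke duality. By Lemma \ref{Duality}, $T_g^{*}=T_{\overline{g}}$ maps $L^{q'}(\mathbb{T}^d_\theta)\to L^{p'}(\mathbb{T}^d_\theta)$ with equal operator norm, and the new pair satisfies $1<q' \leq 2 \leq p' <\infty$ together with $\frac{1}{q'}+\frac{1}{p'} = 2-\frac{1}{p}-\frac{1}{q} > 1$, so it falls into the first sub-case. Because $|\overline{g}|=|g|$, the constant $K$ is unchanged, and the bound transfers back to $T_g$.

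The delicate point is the index bookkeeping: the constraint $p \leq r \leq p'$ required by Theorem \ref{thm4.2} with the forced choice $r=q'$ is automatic only when $\frac{1}{p}+\frac{1}{q} \geq 1$, and this is precisely what forces the duality-based case split. Once that is resolved, the proof is essentially a clean composition of the previously established Hausdorff--Young--Paley and reverse Hausdorff--Young inequalities, with the identities $\frac{1}{q'}-\frac{1}{p'}=s$ and $\frac{1}{s}=\frac{q'p'}{p'-q'}$ singling out $h=|g|^{1/s}$ as the correct weight.
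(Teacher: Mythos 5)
Your proposal is correct and follows essentially the same route as the paper: both split into the cases $p\leq q'$ and $q'\leq p$ (the latter handled by the duality $T_g^{*}=T_{\overline{g}}$ from Lemma \ref{Duality}), apply the reverse Hausdorff--Young inequality of Theorem \ref{R-H-Y_ineq} followed by the Hausdorff--Young--Paley inequality of Theorem \ref{thm4.2} with $r=q'$ and the weight $h=|g|^{1/s}$, and then identify $M_h^{\,s}$ with the H\"ormander quantity $K$. Your explicit remark that the points where $g$ vanishes must be discarded (since Theorem \ref{thm4.1} requires $h$ strictly positive) is a small technical point the paper leaves implicit.
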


\begin{proof} By duality it is sufficient to study two cases: $1<p\leq q'\leq 2$ and $1<q'\leq p \leq 2,$ where $1=\frac{1}{q}+\frac{1}{q'}.$

First, we consider the case $1<p\leq q'\leq 2,$ where $1=\frac{1}{q}+\frac{1}{q'}.$ By (\ref{additive2.4}) and (\ref{fourier-coef}) and  (\ref{additive2.9}) we have 
\begin{eqnarray}\label{additive5.2}
\widehat{T_g(x)}(n)&\overset{ \text{(\ref{fourier-coef}})}{=}&\tau_\theta\left(T_g(x)(e^\theta_n)^*\right)\nonumber\\
&\overset{ \text{(\ref{additive2.9}})}{=}&\tau_\theta\left(\sum\limits_{m\in\mathbb{Z}^d}g(m)\widehat{x}(m)e^\theta_m(e^\theta_n)^*\right)\nonumber\\
&=&\sum\limits_{m\in\mathbb{Z}^d}g(m)\widehat{x}(m)\tau_\theta\left(e^\theta_m(e^\theta_n)^*\right)\nonumber\\
&\overset{ \text{(\ref{additive2.4}})}{=}&g(n)\widehat{x}(n), \quad\forall n\in \mathbb{Z}^{d}. 
\end{eqnarray} 
Then, it follows from Theorem \ref{R-H-Y_ineq} and (\ref{additive5.2})  that  
\begin{eqnarray}\label{additive5.3}
\|T_g(x)\|_{L^q\left(\mathbb{T}^d_\theta\right)} \overset{ \text{(\ref{additive3.1'})}}{\leq} \|\widehat{T_g(x)}\|_{\ell^{q'}\left(\mathbb{Z}^d\right)} 
 \overset{ \text{(\ref{additive5.2}})}{=} \|g\cdot\widehat{x}\|_{\ell^{q'}\left(\mathbb{Z}^d\right)},  
  \end{eqnarray}
for all $x\in L^p(\mathbb{T}^d_\theta).$

Therefore, if we set $q':=r$ and $\frac{1}{s}:=\frac{1}{p}-\frac{1}{q}=\frac{1}{q'}-\frac{1}{p'},$ then for $h(n):=|g(n)|^{s}, n\in \mathbb{Z}^{d},$ we are in a position to apply the Hausdorff-Young-Paley inequality in Theorem \ref{thm4.2}. In other words, we obtain
 \begin{eqnarray}\label{additive5.4}
\left(\sum\limits_{n\in\mathbb{Z}^d} \Big(|\widehat{x}(n)|\cdot|g(n)|\Big)^{q'}\right)^\frac{1}{q'}\lesssim M^\frac{1}{s}_{|g|^{s}}\|x\|_{L^p\left(\mathbb{T}^d_\theta\right)}
 \end{eqnarray} 
for any $x\in L^p(\mathbb{T}^d_\theta).$ 
Let us study $M^\frac{1}{s}_{|g|^{s}}$ separately. Indeed, by definition 
$$M^\frac{1}{s}_{|g|^{s}}:=\left(\sup\limits_{\lambda>0}\lambda\sum\limits_{n\in\mathbb{Z}^d \atop  |g(n)|^{s}\geq\lambda}1\right)^{\frac{1}{s}}=\left(\sup\limits_{\lambda>0}\lambda\sum\limits_{n\in\mathbb{Z}^d \atop  |g(n)|\geq\lambda^{\frac{1}{s}}}1\right)^{\frac{1}{s}}=\left(\sup\limits_{\lambda>0}\lambda^{s}\sum\limits_{n\in\mathbb{Z}^d \atop  |g(n)|\geq\lambda}1\right)^{\frac{1}{s}}.$$ Since $\frac{1}{s}:=\frac{1}{p}-\frac{1}{q},$ it follows that
\begin{eqnarray}\label{M_g_estimate}
M^\frac{1}{s}_{|g|^{s}}&=&\left(\sup\limits_{\lambda>0}\lambda^{s}\sum\limits_{n\in\mathbb{Z}^d \atop  |g(n)|\geq\lambda}1\right)^{\frac{1}{p}-\frac{1}{q}}\nonumber\\&=&\sup\limits_{\lambda>0}\lambda^{s\big(\frac{1}{p}-\frac{1}{q}\big)}\left(\sum\limits_{n\in\mathbb{Z}^d \atop  |g(n)|\geq\lambda}1\right)^{\frac{1}{p}-\frac{1}{q}}\nonumber\\&=&\sup\limits_{\lambda>0}\lambda\left(\sum\limits_{n\in\mathbb{Z}^d \atop  |g(n)|\geq\lambda}1\right)^{\frac{1}{p}-\frac{1}{q}}.
 \end{eqnarray}

Hence, combining (\ref{additive5.3}), (\ref{additive5.4}), and \eqref{M_g_estimate} we obtain
\begin{eqnarray*}
\|T_g(x)\|_{L^q(\mathbb{T}^d_\theta)}  
 &\overset{\text{(\ref{additive5.3})}}{\lesssim}&\left(\sum\limits_{n\in\mathbb{Z}^d} \Big(|\widehat{x}(n)|\cdot|g(n)|\Big)^{q'}\right)^\frac{1}{q'}  \\&\overset{ \text{(\ref{additive5.4})}} {\lesssim}& M^\frac{1}{s}_{|g|^{s}}\|x\|_{L^p\left(\mathbb{T}^d_\theta\right)} \\&\overset{ \text{\eqref{M_g_estimate}}}{=} & \sup\limits_{\lambda>0}\lambda\left(\sum\limits_{n\in\mathbb{Z}^d \atop  |g(n)|\geq\lambda}1\right)^{\frac{1}{p}-\frac{1}{q}}\|x\|_{L^p(\mathbb{T}^d_\theta)},  
\end{eqnarray*}
for $1 < p \leq q'\leq 2$ and  $x\in L^p(\mathbb{T}^d_\theta).$

Next, we consider the case $q'\leq p\leq2$ so that $p'\leq (q')'=q,$ where $1=\frac{1}{q}+\frac{1}{q'}$ and $1=\frac{1}{p}+\frac{1}{p'}.$ Thus, the $L^p$-duality (see Lemma \ref{Duality}) yields that $T^{*}_{g}=T_{\overline{g}}$ and
\begin{eqnarray*}
 \|T_g\|_{L^p(\mathbb{T}^d_\theta) \rightarrow L^q(\mathbb{T}^d_\theta)}= \|T_{\overline{g}}\|_{L^{q'}(\mathbb{T}^d_\theta) \rightarrow L^{p'}(\mathbb{T}^d_\theta)}.  
\end{eqnarray*}
The symbol of the adjoint operator $T^{*}_{g}$ equals to $\overline{g}$ and
obviously we have $|g(n)|=|\overline{g(n)}|$ for all $n\in\mathbb{Z}^{d}.$ Set $\frac{1}{p}-\frac{1}{q}=\frac{1}{s}=\frac{1}{q'}-\frac{1}{p'}.$ Hence, by repeating the argument in the previous case we have
\begin{eqnarray*}
\|T_{\overline{g}}(x)\|_{L^{p'}(\mathbb{T}^d_\theta)}  
 &\lesssim& \sup\limits_{\lambda>0}\lambda\left(\sum\limits_{n\in\mathbb{Z}^d \atop  |\overline{g(n)}|\geq\lambda}1\right)^{\frac{1}{q'}-\frac{1}{p'}}\|x\|_{L^{q'}(\mathbb{T}^d_\theta)}\\
 &=& \sup\limits_{\lambda>0}\lambda\left(\sum\limits_{n\in\mathbb{Z}^d \atop  |g(n)|\geq\lambda}1\right)^{\frac{1}{p}-\frac{1}{q}}\|x\|_{L^{q'}(\mathbb{T}^d_\theta)}.  
\end{eqnarray*}
In other words, we have 
$$\|T_{\overline{g}}\|_{L^{q'}(\mathbb{T}^d_\theta) \rightarrow L^{p'}(\mathbb{T}^d_\theta)}\lesssim
\sup\limits_{\lambda>0}\lambda\left(\sum\limits_{n\in\mathbb{Z}^d \atop  |g(n)|\geq\lambda}1\right)^{\frac{1}{p}-\frac{1}{q}}.$$ Combining both cases, we obtain
$$\|T_{g}\|_{L^{p}(\mathbb{T}^d_\theta) \rightarrow L^{q}(\mathbb{T}^d_\theta)}\lesssim
\sup\limits_{\lambda>0}\lambda\left(\sum\limits_{n\in\mathbb{Z}^d \atop  |g(n)|\geq\lambda}1\right)^{\frac{1}{p}-\frac{1}{q}}$$
for all $1 < p \leq 2 \leq q < \infty.$ This concludes the result.
\end{proof}

\begin{rem}Define the partial differentiation operators $\partial^{\theta}_j, j = 1,..., d,$ by the formula
\begin{equation}\label{partial diff}
\partial^{\theta}_j(e_{n}^{\theta}) =in_{j}e_{n}^{\theta}, \quad n = (n_1,...,n_d )\in \mathbb{Z}^{d}.
\end{equation}
Every partial derivation $\partial^{\theta}_j$ can be viewed as a densely defined closed (unbounded) operator
on $L^2(\mathbb{T}_{\theta}^{d})$, whose adjoint is equal to $-\partial^{\theta}_j.$ 
Let $\Delta_{\theta}=(\partial_1^{\theta})^2+\cdots+(\partial_d^{\theta})^2$ be the Laplacian. Then $\Delta_{\theta} = - ((\partial_1^{\theta})^* \partial_1^{\theta} + \cdots +(\partial_d^{\theta})^* \partial_d^{\theta}),$ so $-\Delta_{\theta}$ is a positive operator on $L^2(\mathbb{T}^{d}_\theta)$ with spectrum equal to $\{ |n|^2  : n \in \mathbb{Z}^d\}$ (see \cite{MSQ2019},\cite {XXY}). 
Note that when $\theta=0,$ $-\Delta_{\theta}$ reduces
to the classical Laplacian divided by $(2\pi)^2.$
Let us define the heat semigroups
of operators on $L^2(\mathbb{T}_{\theta}^{d})$ by 
$t\mapsto e^{t\Delta_{\theta}}.$
This operator can be defined as the Fourier multiplier. Observe that by definition (see \eqref{additive2.9}), it is an operator of the form $T_g$ with the symbol $g(n)=e^{-t|n|^2}, \,n \in \mathbb{Z}^d, \, t>0.$ Then, a simple application of Theorem \ref{thm5.1} gives us the $L^p -L^q$ properties of the propagator and  the time asymptotics of its non-commutative Lorentz space norm with the restriction $1<p\leq 2\leq q<\infty.$
However, a stronger result for all $1\leq p< q\leq\infty$ was established in \cite[Lemma 6.1]{XXY}.
\end{rem}

\section{Sobolev and logarithmic Sobolev inequalities on quantum tori}\label{sc8}
In this section, we obtain Sobolev and logarithmic Sobolev type inequalities on quantum tori. 
Let $\partial^{\theta}_j, j = 1,\cdots, d,$ be differentiation operators defined by \eqref{partial diff} in the previous section. We define the gradient $\nabla_{\mathbb{T}_{\theta}^{d}}$ by the formula
$$\nabla_{\theta}=(\partial^{\theta}_1,\cdots,\partial^{\theta}_d).$$
For a multi-index $\alpha=(\alpha_1,...,\alpha_d),$ define
$$\nabla^{\alpha}_{\theta}=(\partial_1^{\theta})^{\alpha_1}\cdots(\partial_d^{\theta})^{\alpha_d},$$
which is also considered as a linear  operator on $L^2(\mathbb{T}_{\theta}^{d}).$

\begin{definition} (see, \cite[Definition 2.6. (ii)]{XXY})
Let  $s\in\mathbb{R}$ and $1\leq p<\infty$. 
Then the potential (or fractional) Sobolev space of order $s\in\mathbb{R}$ is defined to be 
$$
H_p^s(\mathbb{T}^d_\theta)=\{x\in \mathcal{D}'(\mathbb{T}^d_\theta):  (1- \Delta_\theta)^\frac{s}{2}{x}\in L^p(\mathbb{T}^d_\theta) \}, 
$$
 equipped with the norm
$$ 
\|x\|_{H_p^s(\mathbb{T}^d_\theta)}=\|(1- \Delta_\theta)^\frac{s}{2}{x}\|_{L^p(\mathbb{T}^d_\theta)}, 
$$
 where \begin{equation}\label{laplacian}\Delta_{\theta}=(\partial_1^{\theta})^2+\cdots+(\partial_d^{\theta})^2
 \end{equation} is the Laplacian.
If $p=2,$ then as usual, we denote $H_2^s(\mathbb{T}^d_\theta)=:H^s(\mathbb{T}^d_\theta).$ If $k\in\mathbb{N},$ then the corresponding Sobolev space $W_{p}^{k}(\mathbb{T}^d_\theta)$ is defined to be the set of $x\in L^{p}(\mathbb{T}^d_\theta)$ with $\nabla^{\alpha}_{\theta}x\in L^{p}(\mathbb{T}^d_\theta)$ for all $|\alpha|_1 \leq k,$ where $|\alpha|_1=\alpha_1+...+\alpha_d.$ The $W_{p}^{k}$ norm is the sum of the $L^{p}$ norms of $\nabla^{\alpha}_{\theta}x$ for all $0\leq |\alpha|_1 \leq k,$ that is 
$$
\|x\|_{W_{p}^{k}(\mathbb{T}^d_\theta)}=\left(\sum_{0\leq |\alpha|_1 \leq k}\|\nabla^{\alpha}_{\theta}x\|^p_{L^{p}(\mathbb{T}^d_\theta)}\right)^{1/p}.$$
\end{definition}
The above definitions of Sobolev spaces on quantum tori and their main properties including embeddings were studied in \cite{XXY}. 

 As a consequence of Theorem \ref{thm5.1} we obtain the following Sobolev type embedding.  
\begin{thm}\label{sobolev-embed}
  Let $1 < p \leq 2 \leq q < \infty.$ Then the inequality  
$$\|x\|_{L^{q}(\mathbb{T}^d_\theta)}\leq c_{p,q,s} \|x\|_{H_p^s(\mathbb{T}^d_\theta)},
$$
where $c_{p,q,s}>0$ is a constant independent of $x,$ holds true provided that
$s\geq d\left(\frac{1}{p}-\frac{1}{q}\right).$ 
\end{thm}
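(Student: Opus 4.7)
The plan is to realize the claimed Sobolev embedding as an $L^p\to L^q$ mapping property of the Bessel potential operator $(1-\Delta_\theta)^{-s/2}$, and then invoke the H\"ormander multiplier theorem (Theorem \ref{thm5.1}). Setting $y:=(1-\Delta_\theta)^{s/2}x$, one has $\|x\|_{L_s^p(\mathbb{T}^d_\theta)}=\|y\|_{L^p(\mathbb{T}^d_\theta)}$ by definition, and the inequality we want becomes
\[
\|(1-\Delta_\theta)^{-s/2}y\|_{L^q(\mathbb{T}^d_\theta)}\;\lesssim\;\|y\|_{L^p(\mathbb{T}^d_\theta)}.
\]
Since $-\Delta_\theta$ acts as multiplication by $|n|^2$ on the basis element $e_n^\theta$, the operator $(1-\Delta_\theta)^{-s/2}$ is precisely the Fourier multiplier $T_g$ in the sense of \eqref{additive2.9} with symbol
\[
g(n)=(1+|n|^2)^{-s/2},\qquad n\in\mathbb{Z}^d.
\]

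Next, I would verify the hypothesis of Theorem \ref{thm5.1} for this $g$. For $\lambda>1$ the sublevel set is empty (since $|g(0)|=1\geq|g(n)|$ for all $n$), so only $0<\lambda\leq 1$ matters. The condition $|g(n)|\geq\lambda$ reads $(1+|n|^2)^{s/2}\leq\lambda^{-1}$, i.e.\ $|n|\leq(\lambda^{-2/s}-1)^{1/2}$, and a standard lattice point count gives
\[
\sum_{\substack{n\in\mathbb{Z}^d\\ |g(n)|\geq\lambda}}1\;\lesssim\;\bigl(\lambda^{-2/s}-1\bigr)^{d/2}\;\lesssim\;\lambda^{-d/s}.
\]
Therefore
\[
\sup_{0<\lambda\leq1}\lambda\left(\sum_{\substack{n\in\mathbb{Z}^d\\ |g(n)|\geq\lambda}}1\right)^{\frac{1}{p}-\frac{1}{q}}\;\lesssim\;\sup_{0<\lambda\leq 1}\lambda^{\,1-\frac{d}{s}\bigl(\frac{1}{p}-\frac{1}{q}\bigr)},
\]
which is finite precisely when $1-\frac{d}{s}\bigl(\frac{1}{p}-\frac{1}{q}\bigr)\geq 0$, that is, when $s\geq d\bigl(\frac{1}{p}-\frac{1}{q}\bigr)$. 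This is exactly the stated hypothesis.

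Applying Theorem \ref{thm5.1} then yields the boundedness of $T_g=(1-\Delta_\theta)^{-s/2}:L^p(\mathbb{T}^d_\theta)\to L^q(\mathbb{T}^d_\theta)$ with an operator norm controlled by the constant $c_{p,q,s}$, and substituting back $y=(1-\Delta_\theta)^{s/2}x$ concludes the proof. I do not anticipate a serious obstacle: the only quasi-subtle point is the elementary lattice point estimate above, which follows from comparing $\#\{n\in\mathbb{Z}^d:|n|\leq R\}$ with the volume of a Euclidean ball of radius $R$ (handling the case of small $R$ trivially since the count is bounded by a constant independent of $R$ there, and using $R\lesssim\lambda^{-1/s}$ for large $R$). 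Everything else reduces to rewriting the Sobolev norm in terms of the Bessel potential and quoting the already-established H\"ormander multiplier theorem.
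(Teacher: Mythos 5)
Your proposal is correct and follows essentially the same route as the paper: identify $(1-\Delta_\theta)^{-s/2}$ with the Fourier multiplier $T_g$ for $g(n)=(1+|n|^2)^{-s/2}$, bound $\sup_{\lambda>0}\lambda\,(\#\{n:|g(n)|\geq\lambda\})^{\frac1p-\frac1q}$ by a lattice-point count reducing to $\sup_{0<\lambda\leq1}\lambda^{1-\frac{d}{s}(\frac1p-\frac1q)}$, and apply Theorem \ref{thm5.1}. No gaps.
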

\begin{proof} For $s>0,$ let us consider $g(n)=(1+|n|^2)^{- s/2}, \, n\in\mathbb{Z}^d.$ Take $T_g=(1-\Delta_{\theta})^{- s/2}.$ In order to use Theorem \ref{thm5.1}, we need first to calculate the right hand side of \eqref{additive5.1}. For this, we have
\begin{eqnarray*}
\sup\limits_{\lambda>0}\lambda\left(\sum\limits_{n\in\mathbb{Z}^d \atop  |g(n)|\geq\lambda}1\right)^{\frac{1}{p}-\frac{1}{q}}&=& \sup\limits_{ \lambda>0}\lambda\left(\sharp\{n\in\mathbb{Z}^d: \frac{1}{(1+|n|^2)^{s/2}}\geq \lambda\}\right)^{\frac{1}{p}-\frac{1}{q}}.
\end{eqnarray*}
Since 
\begin{eqnarray*}
\sharp\{n\in\mathbb{Z}^d: \frac{1}{(1+|n|^2)^{s/2}}\geq \lambda\}
= \begin{cases} 0, \,\ \text{if} \,\ 1<\lambda<\infty, \\
\sharp\{n\in\mathbb{Z}^d: |n|\leq \sqrt{\lambda^{-2/s}-1}\},\,\ \text{if} \,\ 0<\lambda\leq 1,
    \end{cases}
\end{eqnarray*}
it follows that
\begin{eqnarray*}
\sup\limits_{\lambda>0}\lambda\left(\sum\limits_{n\in\mathbb{Z}^d \atop  |g(n)|\geq\lambda}1\right)^{\frac{1}{p}-\frac{1}{q}}&=& \sup\limits_{0<\lambda\leq 1}\lambda\left(\sharp\{n\in\mathbb{Z}^d: \frac{1}{(1+|n|^2)^{s/2}}\geq \lambda\}\right)^{\frac{1}{p}-\frac{1}{q}}\\
&=& \sup\limits_{0<\lambda\leq 1}\lambda\left(\sharp\{n\in\mathbb{Z}^d: |n|\leq \sqrt{\lambda^{-2/s}-1}\}\right)^{\frac{1}{p}-\frac{1}{q}}\\
&\leq&\sup\limits_{0<\lambda\leq 1}\lambda\left(\sharp\{n\in\mathbb{Z}^d: |n|\leq \lambda^{-1/s}\}\right)^{\frac{1}{p}-\frac{1}{q}}\\
&\lesssim&\sup\limits_{0<\lambda\leq 1}\lambda^{1-\frac{d}{s}(\frac{1}{p}-\frac{1}{q})}<\infty,
\end{eqnarray*}
whenever $s\geq d\left(\frac{1}{p}-\frac{1}{q}\right),$ where  $\sharp$  is a counting measure on $\mathbb{Z}^d.$ 
Thus, the function $g$ satisfies the assumptions of Theorem \ref{thm5.1}, and consequently, we have \begin{eqnarray*}
\|x\|_{L^{q}(\mathbb{T}^d_\theta)}&=& \|(1-\Delta_{\theta})^{-s/2}(1-\Delta_{\theta})^{s/2}x\|_{L^{p}(\mathbb{T}^d_\theta)}\\
&\overset{\eqref{additive5.1}}{\leq}& c_{p,q,s} \sup\limits_{\lambda>0}\lambda\left(\sum\limits_{n\in\mathbb{Z}^d \atop  |g(n)|\geq\lambda}1\right)^{\frac{1}{p}-\frac{1}{q}}\|(1-\Delta_{\theta})^{s/2}x\|_{L^{p}(\mathbb{T}^d_{\theta})}\\
&\leq &c_{p,q,s}\|x\|_{H_p^s(\mathbb{T}^d_\theta)}
\end{eqnarray*}
for $s\geq d\left(\frac{1}{p}-\frac{1}{q}\right)$ and $x\in H_p^s(\mathbb{T}^d_\theta),$
thereby completing the proof.
\end{proof}

We also obtain the following a simple consequence of the non-commutative H\"{o}lder inequality.
\begin{lem}\label{inter-lem}Let $1\leq p \leq r \leq q\leq \infty$   be such that $\frac{1}{r}=\frac{\eta}{p}+\frac{1-\eta}{q}$ for some $\eta\in(0,1).$ Then for any $x\in L^q(\mathbb{T}^d_\theta),$ we have
\begin{equation}\label{inter-estimate}
\|x\|_{L^r(\mathbb{T}^d_\theta)}\leq \|x\|^{\eta}_{L^p(\mathbb{T}^d_\theta)}\|x\|^{1-\eta}_{L^q(\mathbb{T}^d_\theta)}.
 \end{equation}
\end{lem}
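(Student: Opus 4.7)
The plan is to apply the non-commutative H\"older inequality to the product $|x|^{r\eta}\cdot|x|^{r(1-\eta)}$ with a pair of conjugate exponents chosen so that the trace splits into $\tau_\theta(|x|^p)$ and $\tau_\theta(|x|^q)$. The identity $\frac{1}{r}=\frac{\eta}{p}+\frac{1-\eta}{q}$ can be rewritten as $\frac{r\eta}{p}+\frac{r(1-\eta)}{q}=1$, which is exactly the duality relation needed for H\"older with exponents $\frac{p}{r\eta}$ and $\frac{q}{r(1-\eta)}$.

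More precisely, I would start from $\|x\|_{L^r(\mathbb{T}^d_\theta)}^{r}=\tau_\theta(|x|^r)=\tau_\theta\bigl(|x|^{r\eta}\cdot|x|^{r(1-\eta)}\bigr)$, then invoke the non-commutative H\"older inequality (the same version already cited in the proof of Theorem \ref{thm3.1}, \cite[Theorem 5.2.9]{Xu2008}) to obtain
\begin{equation*}
\tau_\theta\bigl(|x|^{r\eta}\cdot|x|^{r(1-\eta)}\bigr)\leq\tau_\theta\bigl(|x|^{p}\bigr)^{r\eta/p}\,\tau_\theta\bigl(|x|^{q}\bigr)^{r(1-\eta)/q}=\|x\|_{L^p(\mathbb{T}^d_\theta)}^{r\eta}\,\|x\|_{L^q(\mathbb{T}^d_\theta)}^{r(1-\eta)}.
\end{equation*}
Taking $r$-th roots yields the claimed inequality \eqref{inter-estimate}.

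The borderline case $q=\infty$ has to be treated separately, but it is easier: the condition reduces to $r\eta=p$, and one argues via $\tau_\theta(|x|^r)\leq\|x\|_{L^\infty(\mathbb{T}^d_\theta)}^{r(1-\eta)}\tau_\theta(|x|^{r\eta})=\|x\|_{L^\infty(\mathbb{T}^d_\theta)}^{r(1-\eta)}\|x\|_{L^p(\mathbb{T}^d_\theta)}^{p}$, after which taking $r$-th roots gives the same conclusion. I expect no real obstacle here: both the bookkeeping with exponents and the appeal to non-commutative H\"older are routine, and the only mild care needed is to verify that $|x|^{r\eta}$ and $|x|^{r(1-\eta)}$ are genuine $\tau_\theta$-measurable operators lying in the relevant $L^{p/(r\eta)}$ and $L^{q/(r(1-\eta))}$ spaces, which follows from the Borel functional calculus applied to the (positive, self-adjoint) operator $|x|$.
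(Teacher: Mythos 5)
Your proof is correct and is exactly the argument the paper has in mind: the paper omits the proof, stating only that it is "a simple consequence of the non-commutative H\"older inequality \cite[Theorem 5.2.9]{Xu2008}," and your factorization $|x|^r=|x|^{r\eta}|x|^{r(1-\eta)}$ with conjugate exponents $p/(r\eta)$ and $q/(r(1-\eta))$ is the standard way to carry that out. The exponent bookkeeping and the separate treatment of $q=\infty$ are both fine.
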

\begin{proof}The proof is a simple consequence of the non-commutative H\"{o}lder inequality \cite[Theorem 1]{Sukochev}, therefore, is omitted.
\end{proof}

\begin{lem}\label{log-holder}(Logarithmic H\"{o}lder inequality)
 Let $1\leq p< q< \infty.$ Then for any $0\neq x\in L^p(\mathbb{T}^d_\theta)$ we have    \begin{equation}\label{log-estimate}
\tau_{\theta}\left(\frac{|x|^p}{\|x\|^p_{L^p(\mathbb{T}^d_\theta)}}\log \Big(\frac{|x|^p}{\|x\|^p_{L^p(\mathbb{T}^d_\theta)}}\Big)\right)\leq \frac{q}{q-p}\log \Big(\frac{\|x\|^p_{L^q(\mathbb{T}^d_\theta)}}{\|x\|^p_{L^p(\mathbb{T}^d_\theta)}}\Big).
 \end{equation}
\end{lem}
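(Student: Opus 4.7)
The approach is to reduce the inequality to its classical commutative form via the Borel functional calculus for the positive $\tau_\theta$-measurable operator $|x|$, and then apply Jensen's inequality to the concave logarithm. After the homogeneous normalization $y := x/\|x\|_{L^p(\mathbb{T}^d_\theta)}$, so that $\|y\|_{L^p(\mathbb{T}^d_\theta)} = 1$, the target reduces to
$$
\tau_\theta\bigl(|y|^p \log |y|^p\bigr) \leq \frac{q}{q-p}\log \|y\|_{L^q(\mathbb{T}^d_\theta)}^p.
$$
If $\|x\|_{L^q(\mathbb{T}^d_\theta)} = +\infty$ the right-hand side is $+\infty$ and nothing is to prove, so I may additionally assume $y \in L^q(\mathbb{T}^d_\theta)$.

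\textbf{Spectral reduction.} Since $(L^\infty(\mathbb{T}^d_\theta), \tau_\theta)$ is a finite von Neumann algebra with faithful normal tracial state and $|y|$ is positive self-adjoint and $\tau_\theta$-measurable, the Borel functional calculus paired with the trace furnishes a Borel probability measure $\lambda$ on $[0,\infty)$---the distribution of $|y|$ under $\tau_\theta$---such that
$$
\tau_\theta\bigl(f(|y|)\bigr) \;=\; \int_0^\infty f(t)\,d\lambda(t)
$$
for every Borel $f$ with $f(|y|)\in L^1(\tau_\theta)$; see, e.g., \cite{LSZ2012,PXu}. Under this identification $\int t^p\,d\lambda = \tau_\theta(|y|^p) = 1$, $\int t^q\,d\lambda = \|y\|_{L^q(\mathbb{T}^d_\theta)}^q$, and
$$
\tau_\theta\bigl(|y|^p \log |y|^p\bigr) \;=\; \int_0^\infty t^p \log t^p\,d\lambda(t),
$$
the right-hand side being a well-defined finite Lebesgue integral thanks to the elementary bound $|t^p \log t^p| \lesssim 1 + t^q$ on $[0,\infty)$ and $y\in L^p\cap L^q$.

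\textbf{Jensen's inequality and conclusion.} Set $d\mu := t^p\,d\lambda$, which is a probability measure on $[0,\infty)$ by the preceding step. Splitting $\log t^p = \tfrac{p}{q-p}\log t^{q-p}$ and applying Jensen's inequality for the concave function $\log$ against the probability measure $\mu$,
$$
\int \log t^p\,d\mu \;=\; \frac{p}{q-p}\int \log t^{q-p}\,d\mu \;\leq\; \frac{p}{q-p}\log\!\int t^{q-p}\,d\mu \;=\; \frac{p}{q-p}\log\!\int t^q\,d\lambda,
$$
and the last expression equals $\tfrac{p}{q-p}\log \|y\|_{L^q(\mathbb{T}^d_\theta)}^q = \tfrac{q}{q-p}\log \|y\|_{L^q(\mathbb{T}^d_\theta)}^p$, which is precisely the normalized target; undoing the rescaling recovers \eqref{log-estimate}. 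The only genuinely non-commutative ingredient is the spectral identification $\tau_\theta(f(|y|)) = \int f\,d\lambda$ for the unbounded Borel function $f(t) = t^p\log t^p$, which is singular at $0$ and grows at $\infty$; once the standard theory of $\tau$-measurable operators on a finite von Neumann algebra is invoked to justify this passage, the rest of the argument is the classical Jensen-based proof on the scalar side.
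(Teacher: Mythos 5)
Your proof is correct, but it follows a genuinely different route from the paper's. The paper never passes to a scalar measure: it considers $f(1/r)=\log\|x\|_{L^r(\mathbb{T}^d_\theta)}$, deduces convexity of $f$ in the variable $s=1/r$ from the interpolation inequality of Lemma \ref{inter-lem} (itself a consequence of the non-commutative H\"older inequality), differentiates $f(s)=s\log\tau_\theta(|x|^{1/s})$ in $s$, and then applies the secant-line inequality for convex functions at $s=1/p$, $s_0=1/q$; a rearrangement using linearity of the trace yields \eqref{log-estimate}. You instead normalize to $\|y\|_{L^p}=1$, push everything to the commutative side via the spectral distribution measure $\lambda$ of $|y|$ (so that $\tau_\theta(f(|y|))=\int f\,d\lambda$), observe that $d\mu=t^p\,d\lambda$ is a probability measure, and apply Jensen's inequality for $\log$ directly. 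Your reduction is legitimate: for a positive $\tau_\theta$-measurable operator on a finite von Neumann algebra with normal faithful tracial state, $\lambda(B)=\tau_\theta(E_{|y|}(B))$ does give $\tau_\theta(f(|y|))=\int f\,d\lambda$ for Borel $f$ with $f(|y|)\in L^1(\tau_\theta)$, and your integrability bound $|t^p\log t^p|\lesssim 1+t^q$ together with the dispatch of the case $\|x\|_{L^q}=\infty$ closes the remaining loose ends. What your route buys is that Jensen is applied once, explicitly, and with no need to differentiate $s\mapsto\tau_\theta(|x|^{1/s})$ under the trace (a step the paper carries out somewhat informally for an unbounded operator); it also visibly works for all $0<p<q$ and in the semifinite setting, since only the normalization of $\mu$, not of $\lambda$, is used. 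What the paper's route buys is that it stays entirely at the level of $L^r$-norms and the already-established Lemma \ref{inter-lem}, so no spectral-distribution machinery for $\tau$-measurable operators needs to be invoked.
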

\begin{proof} The proof is a modification of the proof in \cite{CAM2023}.  Let us consider the 
function 
$$f(\frac{1}{r})=\log (\|x\|_{L^r(\mathbb{T}^d_\theta)}), \quad r>0.$$
By Lemma \ref{inter-lem} we have
\begin{eqnarray*}f(\frac{1}{r})&=&\log (\|x\|_{L^r(\mathbb{T}^d_\theta)})\leq \log (\|x\|^{\eta}_{L^p(\mathbb{T}^d_\theta)}\|x\|^{1-\eta}_{L^q(\mathbb{T}^d_\theta)})\\
&=&\log (\|x\|^{\eta}_{L^p(\mathbb{T}^d_\theta)})+\log (\|x\|^{1-\eta}_{L^q(\mathbb{T}^d_\theta)})\\
&=&\eta f(\frac{1}{p})+(1-\eta) f(\frac{1}{q}), 
\end{eqnarray*}
with $r>1$ and $\eta\in (0,1)$ such that $\frac{1}{r}=\frac{\eta}{p}+\frac{1-\eta}{q}.$
This shows that the function $f$ is convex. Since
$$f(s)=s\log (\tau_{\theta}(|x|^{1/s})),$$ taking derivative with respect to $s$ (see \cite[Lemma 4.2]{Xiong}), we obtain
\begin{eqnarray*}f'(s)&=&\log (\tau_{\theta}(|x|^{1/s}))+s\Big(\log (\tau_{\theta}(|x|^{1/s}))\Big)'_{s}\\
&=&\log (\tau_{\theta}(|x|^{1/s}))+s\frac{\Big(\tau_{\theta}(|x|^{1/s})\Big)'_{s}}{\tau_{\theta}(|x|^{1/s})}\\
&=&\log (\tau_{\theta}(|x|^{1/s}))-\frac{1}{s}\frac{\tau_{\theta}(|x|^{1/s}\log(|x|))}{\tau_{\theta}(|x|^{1/s})}. 
\end{eqnarray*}
On the other hand, since $f$ is a differentiable convex function of $s,$ it follows that
$$f'(s)\geq \frac{f(s_0)-f(s)}{s_0-s}, \quad s_0>s>0.$$
Taking $s=\frac{1}{p}$ and $s_0=\frac{1}{q},$ we have
\begin{equation}\label{almost-log-holder}
    p\frac{\tau_{\theta}(|x|^{p}\log(|x|))}{\tau_{\theta}(|x|^{p})}-\log (\tau_{\theta}(|x|^{p}))
\leq\frac{pq}{q-p}\log \Big(\frac{\|x\|_{L^q(\mathbb{T}^d_\theta)}}{\|x\|_{L^p(\mathbb{T}^d_\theta)}}\Big).
\end{equation}
But, by linearity of $\tau_{\theta}$ the left hand side of the previous inequality is represented as follows
\begin{eqnarray*}&p&\frac{\tau_{\theta}(|x|^{p}\log(|x|))}{\tau_{\theta}(|x|^{p})}-\log (\tau_{\theta}(|x|^{p}))=p\frac{\tau_{\theta}(|x|^{p}\log(|x|))}{\tau_{\theta}(|x|^{p})}-\frac{\tau_{\theta}(|x|^{p})\cdot\log (\tau_{\theta}(|x|^{p}))}{\tau_{\theta}(|x|^{p})}\\
&=&\frac{\tau_{\theta}(|x|^{p}\log(|x|^p))}{\tau_{\theta}(|x|^{p})}-\frac{\tau_{\theta}\Big(|x|^{p}\log (\|x\|^{p}_{L^p(\mathbb{T}^d_\theta)})\Big)}{\tau_{\theta}(|x|^{p})}\\
&=&\frac{\tau_{\theta}(|x|^{p}\log(|x|^p))-\tau_{\theta}\Big(|x|^{p}\log (\|x\|^{p}_{L^p(\mathbb{T}^d_\theta)})\Big)}{\tau_{\theta}(|x|^{p})}\\
&=&\frac{\tau_{\theta}\Big(|x|^{p}\Big(\log(|x|^p)-\log (\|x\|^{p}_{L^p(\mathbb{T}^d_\theta)})\Big)\Big)}{\tau_{\theta}(|x|^{p})}=\tau_{\theta}\left(\frac{|x|^p}{\|x\|^p_{L^p(\mathbb{T}^d_\theta)}}\log \Big(\frac{|x|^p}{\|x\|^p_{L^p(\mathbb{T}^d_\theta)}}\Big)\right).
\end{eqnarray*}
Hence, the assertion follows from \eqref{almost-log-holder}.
\end{proof}

As an application of Theorem \ref{sobolev-embed} for the case $1 < p < 2$ and applying \cite[Theorem 6.6. (i), p. 70]{XXY} for $2\leq p<\infty,$ we obtain the following logarithmic type Sobolev inequality. We have found recently in \cite{JLZZ} that the logarithmic Sobolev inequality is valid for an arbitrary hypercontractive semigroup acting on a noncommutative probability space (see also \cite{Xiong}).
Another type of logarithmic Sobolev inequality was obtained in \cite{Lee} on quantum tori when $p=2.$
\begin{thm}(Logarithmic Sobolev inequality)\label{sobolev-ineq} Let $s>0$ and $1<p<\infty$ be such that $d>sp.$
Then for any $0\neq x\in H_p^s(\mathbb{T}^d_\theta)$  we have the fractional
logarithmic Sobolev  inequality
\begin{equation}\label{sobolev-log-estimate}
\tau_{\theta}\left(\frac{|x|^p}{\|x\|^p_{L^p(\mathbb{T}^d_\theta)}}\log \Big(\frac{|x|^p}{\|x\|^p_{L^p(\mathbb{T}^d_\theta)}}\Big)\right)\leq \frac{d}{sp}\log \Big(C_{p,d}\frac{\|x\|^p_{H_p^s(\mathbb{T}^d_\theta)}}{\|x\|^p_{L^p(\mathbb{T}^d_\theta)}}\Big), 
 \end{equation}
where $C_{p,d}>0$ is a constant independent of $x.$
\end{thm}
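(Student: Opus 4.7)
The plan is to combine the two ingredients just proved: the logarithmic H\"older inequality (Lemma \ref{log-holder}) and the Sobolev embedding (Theorem \ref{sobolev-embed}). The strategy is to apply Lemma \ref{log-holder} with $q$ chosen to be the Sobolev conjugate exponent $p^{*} := \frac{dp}{d-sp}$, then estimate the resulting $L^{p^{*}}$-norm by the Sobolev norm via Theorem \ref{sobolev-embed}.

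Concretely, I would first record the two algebraic identities that make this choice of $q$ work: $\frac{1}{p}-\frac{1}{p^{*}}=\frac{s}{d}$ and $\frac{p^{*}}{p^{*}-p}=\frac{d}{sp}$. Both are immediate from the definition of $p^{*}$ and require only the assumption $d>sp$, which also ensures $p^{*}<\infty$ and $p^{*}>p$. Inserting $q=p^{*}$ into \eqref{log-estimate} produces exactly the prefactor $\frac{d}{sp}$ that appears in \eqref{sobolev-log-estimate}, giving
\begin{equation*}
\tau_{\theta}\!\left(\frac{|x|^{p}}{\|x\|^{p}_{L^{p}(\mathbb{T}^{d}_{\theta})}}\log\Big(\frac{|x|^{p}}{\|x\|^{p}_{L^{p}(\mathbb{T}^{d}_{\theta})}}\Big)\right)\leq \frac{d}{sp}\log\Big(\frac{\|x\|^{p}_{L^{p^{*}}(\mathbb{T}^{d}_{\theta})}}{\|x\|^{p}_{L^{p}(\mathbb{T}^{d}_{\theta})}}\Big).
\end{equation*}
The critical exponent choice is tailored so that Theorem \ref{sobolev-embed} applies with equality in the Sobolev condition, i.e.\ $s=d\bigl(\frac{1}{p}-\frac{1}{p^{*}}\bigr)$, producing $\|x\|_{L^{p^{*}}(\mathbb{T}^{d}_{\theta})}\leq c_{p,p^{*},s}\|x\|_{L^{p}_{s}(\mathbb{T}^{d}_{\theta})}$. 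Substituting this bound inside the logarithm (which is monotone) yields \eqref{sobolev-log-estimate} with $C_{p,d}=c_{p,p^{*},s}^{\,p}$.

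The main obstacle is range compatibility. Theorem \ref{sobolev-embed} requires $1<p\leq 2\leq q<\infty$, so to invoke it with $q=p^{*}$ one needs both $p\leq 2$ and $p^{*}\geq 2$; the latter translates into $p(d+2s)\geq 2d$. Together with the stated hypotheses $1<p<\infty$ and $d>sp$, this delineates the admissible range (and is automatic at $p=2$). Inside this range the argument is a two-line combination of the two preceding results; outside it, the Sobolev step would have to be decomposed (e.g.\ embedding $L^{p}_{s}$ into $L^{2}_{s'}$ for a suitable $s'$ and then applying the $p=2$ case), which would affect the constant $C_{p,d}$ but not the structural form of the inequality. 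Up to this bookkeeping, no further calculation is required beyond the identity $\frac{q}{q-p}=\frac{d}{sp}$ at $q=p^{*}$.
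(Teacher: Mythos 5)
Your proposal is correct and follows essentially the same route as the paper: apply Lemma \ref{log-holder} with $q=p^{*}=\frac{dp}{d-sp}$ (so that $\frac{q}{q-p}=\frac{d}{sp}$) and then bound $\|x\|_{L^{p^{*}}}$ by $\|x\|_{L^{p}_{s}}$ via the Sobolev embedding. The range issue you flag is real, and the paper resolves it exactly where you anticipate trouble, by invoking the external embedding $L^{p}_{s}(\mathbb{T}^{d}_{\theta})\subset L^{p^{*}}(\mathbb{T}^{d}_{\theta})$ from \cite[Theorem 6.6 (i)]{XXY} for $2\leq p<\infty$ and using Theorem \ref{sobolev-embed} only for $1<p<2$.
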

\begin{proof}
By Theorem \ref{sobolev-embed} for  $1 < p \leq 2\leq p^*<\infty$ and by \cite[Theorem 6.6. (i), p. 70]{XXY} for other cases, we have $H_p^s(\mathbb{T}^d_\theta)\subset L^{p^*}(\mathbb{T}^d_\theta)$ with $1<p<p^*<\infty$ and $s>0$ such that $s=\frac{d}{p}-\frac{d}{p^*}.$ This means that there exists a constant $C_{p,p^*,d}>0$ such that
$$\|x\|_{L^{p^*}(\mathbb{T}^d_\theta)}\leq C_{p,p^*,d}\|x\|_{H_p^s(\mathbb{T}^d_\theta)}, \,\ x\in H_p^s(\mathbb{T}^d_\theta). $$
Taking $q:=p^*=\frac{dp}{d-sp}$ in Lemma \ref{log-holder} and since $\frac{p^*}{p^* -q}=\frac{d}{sp},$ the assertion follows from \eqref{log-estimate}.
\end{proof}

\subsection{Nash type inequality on quantum tori}
In this subsection, we prove the Nash inequality on quantum tori. In the classical case, this inequality is applied as a main tool in computing the decay rate for the heat equation for the sub-Laplacian. For more details about these inequalities and their applications to PDEs in Lie groups we refer the reader to \cite{CAM2023}.
\begin{thm}\label{Nash-inequality}(Nash type inequality). Let $x\in H^{1}_{2}(\mathbb{T}^d_\theta)$ and $d>2$. Then  we have 
 \begin{eqnarray}\label{Nash's-inequality}
  \|x\|^{1+\frac{2}{d}}_{L^{2}(\mathbb{T}^d_\theta)} \leq C_{1,d}\| x\|_{H^{1}_{2}(\mathbb{T}^d_\theta)}\|x\|^{\frac{2}{d}}_{L^{1}(\mathbb{T}^d_\theta)},  
 \end{eqnarray}   
  where $C_{1,d}>0$ is a constant independent of $x.$
\end{thm}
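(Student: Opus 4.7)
The plan is to prove the Nash inequality in the standard Fourier-splitting style, adapted to the quantum torus. All the tools needed are already available: Plancherel \eqref{additive2.8}, the Hausdorff--Young inequality (Theorem \ref{thm3.1}) with $p=1$, and the fact that the gradient $\nabla_\theta$ acts as multiplication by $n$ on the Fourier side, i.e.\ $\widehat{\partial_j^\theta x}(n) = n_j \widehat{x}(n)$ by \eqref{partial diff}.

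First, for any $R > 0$, split the Parseval sum
\[
\|x\|_{L^2(\mathbb{T}^d_\theta)}^2 \;=\; \sum_{n\in\mathbb{Z}^d} |\widehat{x}(n)|^2 \;=\; \sum_{|n|\leq R} |\widehat{x}(n)|^2 + \sum_{|n|>R} |\widehat{x}(n)|^2.
\]
For the low-frequency piece, combine the pointwise bound $|\widehat{x}(n)| \leq \|x\|_{L^1(\mathbb{T}^d_\theta)}$ furnished by Theorem \ref{thm3.1} with $p=1$, together with the elementary lattice-point count $\#\{n\in\mathbb{Z}^d : |n|\leq R\} \lesssim (1+R)^d$, to get
\[
\sum_{|n|\leq R} |\widehat{x}(n)|^2 \;\lesssim\; (1+R)^d \|x\|_{L^1(\mathbb{T}^d_\theta)}^2.
\]
For the high-frequency piece, use $1 \leq |n|^2/R^2$ for $|n| > R$ together with the Plancherel identity applied componentwise to $\nabla_\theta x$:
\[
\sum_{|n|>R} |\widehat{x}(n)|^2 \;\leq\; \frac{1}{R^2}\sum_{n\in\mathbb{Z}^d} |n|^2 |\widehat{x}(n)|^2 \;=\; \frac{1}{R^2}\|\nabla_\theta x\|_{L^2(\mathbb{T}^d_\theta)}^2 \;\leq\; \frac{1}{R^2}\|x\|_{W^{1,2}(\mathbb{T}^d_\theta)}^2.
\]

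Next, I will take $R \geq 1$ so that $(1+R)^d \leq 2^d R^d$, giving the clean bound
\[
\|x\|_{L^2(\mathbb{T}^d_\theta)}^2 \;\lesssim\; R^d \|x\|_{L^1(\mathbb{T}^d_\theta)}^2 + R^{-2} \|x\|_{W^{1,2}(\mathbb{T}^d_\theta)}^2,
\]
and then optimise in $R$. Setting $R^{d+2} = \|x\|_{W^{1,2}(\mathbb{T}^d_\theta)}^2 / \|x\|_{L^1(\mathbb{T}^d_\theta)}^2$ balances the two terms and yields
\[
\|x\|_{L^2(\mathbb{T}^d_\theta)}^2 \;\lesssim\; \|x\|_{L^1(\mathbb{T}^d_\theta)}^{4/(d+2)} \|x\|_{W^{1,2}(\mathbb{T}^d_\theta)}^{2d/(d+2)}.
\]
Raising both sides to the power $(d+2)/(2d)$ (and noting $1+\tfrac{2}{d} = \tfrac{d+2}{d}$) gives exactly \eqref{Nash's-inequality}.

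The only non-trivial concern is the edge case where the optimal $R$ suggested by the calculation is less than $1$; this happens precisely when $\|x\|_{W^{1,2}(\mathbb{T}^d_\theta)} \leq \|x\|_{L^1(\mathbb{T}^d_\theta)}$, in which case the Nash-type bound becomes trivial, since then $\|x\|_{L^2}^2 \leq \|x\|_{W^{1,2}}^2 \leq \|x\|_{W^{1,2}}\|x\|_{L^1}$, and one recovers \eqref{Nash's-inequality} by interpolating with the crude bound $\|x\|_{L^2} \leq \|x\|_{W^{1,2}}$. So the main thing to get right is simply to handle the two regimes of $R$ cleanly; once that is done, the proof is essentially the classical Nash argument transcribed to the Fourier side on $\mathbb{Z}^d$, which works verbatim because $\{e_n^\theta\}$ is still an orthonormal basis of $L^2(\mathbb{T}^d_\theta)$ and the gradient is still diagonalised by it. The assumption $d>2$ is only used to ensure $d+2 > 0$ and $2d/(d+2) < 2$, so that the exponents are well-behaved, and enters no delicate counting step.
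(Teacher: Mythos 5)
Your proof is correct, but it is a genuinely different argument from the one in the paper. The paper derives \eqref{Nash's-inequality} as a corollary of the Sobolev embedding of Theorem \ref{sobolev-embed} (hence ultimately of the H\"ormander multiplier theorem): it first gets $\|x\|_{L^{2^*}(\mathbb{T}^d_\theta)}\lesssim\|x\|_{W^{1,2}(\mathbb{T}^d_\theta)}$ with $2^*=\tfrac{2d}{d-2}$ and then interpolates $L^2$ between $L^1$ and $L^{2^*}$ via Lemma \ref{inter-lem}. You instead run the classical Fourier-splitting proof directly on the lattice: Plancherel, the bound $\|\widehat{x}\|_{\ell^\infty}\leq\|x\|_{L^1}$ (which is exactly the $(1,\infty)$ endpoint established inside the proof of Theorem \ref{thm3.1}), the count $\#\{|n|\leq R\}\lesssim(1+R)^d$, and $\sum_j\|\partial_j^\theta x\|_{L^2}^2=\sum_n|n|^2|\widehat{x}(n)|^2$, followed by optimisation in $R$. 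Each step checks out, including the edge case: since the trace is normalised, $\|x\|_{L^1}\leq\|x\|_{L^2}\leq\|x\|_{W^{1,2}}$, so your optimal $R$ satisfies $R\geq1$ automatically and the low-$R$ regime is essentially vacuous (it forces $x$ to be a scalar multiple of the identity), though your fallback interpolation there is also valid. What your route buys is that it is entirely elementary and self-contained — it bypasses Theorem \ref{sobolev-embed} and the multiplier machinery — and it works for every $d\geq1$; your closing remark is slightly off only in that $d+2>0$ and $2d/(d+2)<2$ hold trivially, so in fact $d>2$ is not needed at all in your argument, whereas the paper's proof genuinely requires $d>2$ to have the Sobolev exponent $2^*$ available. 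What the paper's route buys is thematic economy: the Nash inequality appears there as one more application of the H\"ormander-type theorem that is the main object of the paper.
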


\begin{proof} 
By the Sobolev inequality in Theorem \ref{sobolev-embed}, we get
\begin{eqnarray}\label{s-inequality}
  \|x\|_{L^{2^*}(\mathbb{T}^d_\theta)} \leq C_{1,d} \|x\|_{H^{1}_{2}(\mathbb{T}^d_\theta)}, 
 \end{eqnarray} 
 where $C_{1,d}>0$ is a constant independent of $x$.  

By Lemma \ref{inter-lem} with 
 $r=2$, $q=2^* = \frac{2d}{d-2},$  and $p=1,$ we obtain
\begin{eqnarray}\label{interpolation-inequality}
  \|x\|_{L^{2}(\mathbb{T}^d_\theta)} \leq \|x\|^{\eta}_{L^{1}(\mathbb{T}^d_\theta)}\|x\|^{1-\eta}_{L^{2^*}(\mathbb{T}^d_\theta)},
 \end{eqnarray} 
where
\begin{eqnarray}\label{parametr}
\eta:=\frac{\frac{1}{r}-\frac{1}{q}}{\frac{1}{p}-\frac{1}{q}}=\frac{\frac{1}{2}-\frac{d-2}{2d}}{\frac{d+2}{2d}}=\frac{2}{d+2}.
\end{eqnarray} 
Hence, combining \eqref{s-inequality},  \eqref{interpolation-inequality}, and \eqref{parametr}, we get
\begin{eqnarray*}
\|x\|_{L^{2}(\mathbb{T}^d_\theta)} &\overset{ \text{\eqref{s-inequality} \eqref{interpolation-inequality}} }{\leq}  ( C_{1,d} \|x\|_{H_{2}^{1}(\mathbb{T}^d_\theta)})^{1-\eta}  \|x\|^{\eta}_{L^{1}(\mathbb{T}^d_\theta)} \overset{\text{(\ref{parametr})} }{=} C^{\frac{d}{d+2}}_{1,d} \| x\|^{\frac{d}{d+2}}_{H_{2}^{1}(\mathbb{T}^d_\theta)}\|x\|^{\frac{2}{d+2}}_{L^{1}(\mathbb{T}^d_\theta)}. 
\end{eqnarray*}
 Rising both sides of the last inequality  to the power   $\frac{d+2}{d}$, one gets inequality \eqref{Nash's-inequality}.
\end{proof}

The following result shows an application of the Nash inequality to compute the decay rate for a heat equation. 
\begin{cor}Let $d>2$ and $u_{0}\in L^{1}(\mathbb{T}^d_{\theta})$ be a positive operator such that
\begin{equation}\label{heat-equation}
   \partial_t u(t)=\Delta_{\theta}u (t)-u(t),\quad u(0)=u_0.
\end{equation}
Then the solution of equation \eqref{heat-equation} has the following time decay 
$$
\|u(t)\|_{L^{2}(\mathbb{T}^d_{\theta})}\leq \left(\|u_0\|^{-\frac{4}{d}}_{L^{2}(\mathbb{T}^d_{\theta})}+\frac{4}{dC^{-2}_{1,d}}\|u_0\|^{-\frac{4}{d}}_{L^{1}(\mathbb{T}^d_{\theta})}t\right)^{-\frac{d}{4}},
$$
for all $t\geq 0,$ where $C_{1,d}>0$ is the constant in \eqref{Nash's-inequality}.
\end{cor}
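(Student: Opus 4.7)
The plan is to derive a closed differential inequality for the $L^2$-energy $f(t):=\|u(t)\|_{L^2(\mathbb{T}^d_\theta)}^2$ and integrate it. Since $u_0$ is self-adjoint and \eqref{heat-equation} has real coefficients, I expect the solution $u(t)=e^{-t}e^{t\Delta_\theta}u_0$ to remain self-adjoint for all $t\ge 0$. Differentiating under the trace, using $\partial_t u=\Delta_\theta u-u$ together with the integration-by-parts identity $\tau_\theta(u\Delta_\theta u)=-\|\nabla_\theta u\|_{L^2}^2$ (which is immediate from Plancherel \eqref{additive2.8} after expanding in the basis $\{e_n^\theta\}$, since $\Delta_\theta e_n^\theta=-|n|^2 e_n^\theta$ and $\partial_j^\theta e_n^\theta=n_j e_n^\theta$), I would obtain
\[
f'(t)=2\tau_\theta\bigl(u(t)\partial_t u(t)\bigr)=-2\|\nabla_\theta u(t)\|_{L^2}^2-2\|u(t)\|_{L^2}^2=-2\|u(t)\|_{W^{1,2}(\mathbb{T}^d_\theta)}^2.
\]
In particular $f$ is non-increasing, so $\|u(t)\|_{L^2}\le\|u_0\|_{L^2}$ for all $t\ge 0$; and since $\tau_\theta(\mathbf{1})=1$, the non-commutative H\"older inequality yields the uniform bound $\|u(t)\|_{L^1}\le\|u(t)\|_{L^2}\le\|u_0\|_{L^2}$.

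Next, I would square the Nash inequality \eqref{Nash's-inequality}, solve for $\|u\|_{W^{1,2}}^2$, and substitute into the energy identity above; using the uniform $L^1$ control from the previous step produces the autonomous differential inequality
\[
f'(t)\le -\frac{2}{C_{1,d}^2\,\|u_0\|_{L^2(\mathbb{T}^d_\theta)}^{4/d}}\,f(t)^{1+2/d}.
\]
Setting $g(t):=f(t)^{-2/d}$ converts this into $g'(t)\ge 4/\bigl(dC_{1,d}^2\|u_0\|_{L^2}^{4/d}\bigr)$, and integrating from $0$ to $t$ with $g(0)=\|u_0\|_{L^2}^{-4/d}$ gives
\[
f(t)\le \|u_0\|_{L^2(\mathbb{T}^d_\theta)}^2\left(1+\frac{4t}{dC_{1,d}^2}\right)^{-d/2},
\]
and taking square roots yields the claimed estimate.

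I expect the main obstacle to be the rigorous justification of the energy identity: differentiation of $t\mapsto\tau_\theta(u(t)^2)$, preservation of self-adjointness along the flow, and the integration-by-parts formula for the unbounded operators $\partial_j^\theta$ and $\Delta_\theta$ must all be handled carefully. A clean workaround is to verify everything first on the dense subalgebra of trigonometric polynomials (where the Fourier-series description trivialises the computation and the series are finite) and then to extend by density, using the smoothness of the $L^p$-valued curve $t\mapsto e^{-t}e^{t\Delta_\theta}u_0$.
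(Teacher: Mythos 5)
Your proposal is correct and follows essentially the same route as the paper: derive the energy identity $\frac{d}{dt}\|u(t)\|_{L^2}^2=-2\|\nabla_\theta u\|_{L^2}^2-2\|u\|_{L^2}^2$ (using self-adjointness of the solution, which the paper checks via the Fourier coefficients), deduce $\|u(t)\|_{L^1}\le\|u(t)\|_{L^2}\le\|u_0\|_{L^2}$, insert the Nash inequality \eqref{Nash's-inequality} to obtain $y'\le -2C_{1,d}^{-2}\|u_0\|_{L^2}^{-4/d}y^{1+2/d}$, and integrate. The only cosmetic difference is that you treat the justification of differentiating under the trace as a density argument, whereas the paper works directly with the explicit Fourier-series solution.
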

\begin{proof}
If $u_{0}=\sum\limits_{n\in \mathbb{Z}^d}\widehat{u}_0(n)e^{\theta}_{n}$ and $u(t)=\sum\limits_{n\in \mathbb{Z}^d}\widehat{u}(n)e^{\theta}_{n},$
then the solution of equation \eqref{heat-equation} is written as 
\begin{equation}\label{heat-solution}
u(t)=\sum_{n\in \mathbb{Z}^d}e^{-t(|n|^2+1)}\widehat{u}_0(n)e^{\theta}_{n},
\end{equation}
where $\widehat{u}_0(n)=\tau_{\theta}(u_{0}(e^{\theta}_{n})^*),$ $n\in\mathbb{Z}^d.$ Therefore,
\begin{eqnarray*}
\frac{d}{dt}\|u(t)\|^2_{L^{2}(\mathbb{T}^d_{\theta})}&=&\tau_{\theta}((\partial_tu(t)) u(t))+\tau_{\theta}(u(t)(\partial_t u(t))\\
&=&2\tau_{\theta}((\partial_t u(t))u(t))\\
&=&-2\tau_{\theta}((-\Delta_{\theta}u(t)) u(t))-2\tau_{\theta}(|u(t)|^2)\\
&=&-2 \|(-\Delta_{\theta})^\frac{1}{2}u(t)\|^2_{L^{2}(\mathbb{T}^d_{\theta})}-2\|u(t)\|^2_{L^{2}(\mathbb{T}^d_{\theta})}\\
&\asymp&-2\| u(t)\|^2_{H_{2}^{1}(\mathbb{T}^d_\theta)}. 
\end{eqnarray*}
Since $u_0$ is a positive operator by assumption, the solution operator $u(t)$ in \eqref{heat-solution} is also positive (see \cite[p. 765]{CXY}). Hence, 
\begin{eqnarray*}
\tau_{\theta}(u(t)){=}\sum\limits_{n\in\mathbb{Z}^d}e^{-t(1+|n|^2)} \widehat{u}_0(n) \tau_{\theta}(e^{\theta}_{n})=e^{-t}\widehat{u}_0(0)=e^{-t}\tau_{\theta}(u_0),  
\end{eqnarray*}
which implies that 
\begin{equation}\label{solution-ineq}
\|u(t)\|_{L^1(\mathbb{T}^d_{\theta})} = e^{-t}\|u_0\|_{L^1(\mathbb{T}^d_{\theta})}\le\|u_0\|_{L^1(\mathbb{T}^d_{\theta})}, \,\ t>0.
\end{equation}

Set $y(t):= \|u(t)\|^2_{L^{2}(\mathbb{T}^d_{\theta})}.$ Then applying the Nash inequality \eqref{Nash-inequality} with $s=1$, and by  \eqref{solution-ineq} we obtain 
\begin{eqnarray*}
y'\overset{\eqref{Nash-inequality}}{\leq}  -2 C^{-2}_{1,d}\|u(t)\|^{-\frac{4}{d}}_{L^{1}(\mathbb{T}^d_{\theta})}y^{1+\frac{2}{d}}\overset{\eqref{solution-ineq}}{\le}  -2  C^{-2}_{1,d}\|u_0\|^{-\frac{4}{d}}_{L^{1}(\mathbb{T}^d_{\theta})}y^{1+\frac{2}{d}}.
\end{eqnarray*}
Integrating with respect to $t>0,$ we obtain the following estimate 
\begin{eqnarray*}
\|u(t)\|_{L^{2}(\mathbb{T}^d_{\theta})}\leq \left(\|u_0\|^{-\frac{4}{d}}_{L^{2}(\mathbb{T}^d_{\theta})}+\frac{4}{dC^{-2}_{1,d}}\|u_0\|^{-\frac{4}{d}}_{L^{1}(\mathbb{T}^d_{\theta})}t\right)^{-\frac{d}{4}},
\end{eqnarray*}
which completes the proof.   
\end{proof}

\section{Nikolskii inequality}\label{sc3}
In this section, we prove Nikolskii inequality for trigonometric polynomials on the non-commutative torus. Further, we apply this inequality to study embedding properties between Besov, Wiener, and Beurling spaces. 

Let $\Lambda$ be a finite subset of $\mathbb{Z}^d.$ Define a trigonometric polynomial $x_{\Lambda}$ on $L^{\infty}(\mathbb{T}_{\theta}^{d})$ by
\begin{eqnarray}\label{trig-polynom}
x_{\Lambda}:=\sum\limits_{m\in\Lambda}\widehat{x}_{\Lambda}(m)
  e^\theta_m, 
\end{eqnarray}
 where
 $\widehat{x}_{\Lambda}(m)=\tau_{\theta}(x_{\Lambda}(e_{m}^{\theta})^*), \,\ m\in\Lambda.$ The number of elements of $\Lambda\subset \mathbb{Z}^d$ is denoted by $\sharp(\Lambda).$ We also define
 $$\text{supp}(\widehat{x}_{\Lambda}):=\{m\in\mathbb{Z}^d: \widehat{x}_{\Lambda}(m)\neq 0\}.$$ We denote by $T^{\Lambda}$ the set of trigonometric polynomials as in \eqref{trig-polynom}. Then, it is not difficult to see that $T^{\Lambda}\subset L^p(\mathbb{T}^d_{\theta})$ for all $0<p\leq \infty.$ 
 \begin{rem}\label{powers-trig-polinom}
If $x_{\Lambda}$ be a trigonometric polynomial on $L^{\infty}(\mathbb{T}_{\theta}^{d})$ as in \eqref{trig-polynom}, then it is not difficult to see that $x_{\Lambda}^{*}x_{\Lambda}$ and $x_{\Lambda}^{k}$ are also trigonometric polynomials for any power $k\in\mathbb{Z}_{+},$ respectively.
\end{rem}

The following is an analogue of the classical Nikolskii inequality (see \cite[Proposition, p.147]{HH1987})  on  the non-commutative torus. The proof is a modification of the proof in \cite[Theorem 3.1, p. 989]{NRT}. 
\begin{thm}\label{thm3.2} (Nikolskii  inequality). Let $0<p\leq q\leq\infty$ and let $\gamma$ be the smallest even number larger than or
equal to $\frac{p}{2}$ for $2<p<\infty$ and $\gamma=1$ for $0<p\leq2.$ Let $x_\Lambda$ be a trigonometric polynomial as in \eqref{trig-polynom}. Then we have
\begin{eqnarray}\label{additive3.3}
 \|x_\Lambda\|_{L^q(\mathbb{T}^d_\theta)}\leq {C^{\frac1p-\frac1q}_{\gamma,x_{\Lambda}}}\|x_\Lambda\|_{L^p(\mathbb{T}^d_\theta)},   \end{eqnarray}
where $C_{\gamma,x_{\Lambda}}:=\sharp\{m\in\mathbb{Z}^d: \widehat{x^{\gamma}_{\Lambda}}(m)\neq 0\}$
\end{thm}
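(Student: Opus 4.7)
My plan is to reduce the general Nikolskii inequality to the base case $0<p\le 2$ via a power argument exploiting Lemma \ref{power-polynom}, and to establish the base case by elementary Fourier-analytic means.

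For the base case, the triangle inequality applied to \eqref{trig-polynom} combined with the unitarity $\|e^\theta_n\|_\infty=1$ (Remark \ref{rem2.1}) gives $\|x_\Lambda\|_\infty\le\sum_{n\in\mathrm{supp}(\widehat{x_\Lambda})}|\widehat{x_\Lambda}(n)|$. Cauchy--Schwarz restricted to the finite Fourier support and the Plancherel identity \eqref{additive2.8} then yield the endpoint $\|x_\Lambda\|_\infty\le \sqrt{N(\mathrm{supp}(\widehat{x_\Lambda}))}\,\|x_\Lambda\|_2$ (the $p=2$, $\gamma=1$ case). For $0<p<2$, I combine this with the non-commutative H\"older interpolation $\|x_\Lambda\|_2^2\le\|x_\Lambda\|_\infty^{2-p}\|x_\Lambda\|_p^p$ and rearrange to obtain $\|x_\Lambda\|_\infty\le N^{1/p}\|x_\Lambda\|_p$; one further application of the analogous interpolation $\|x_\Lambda\|_q^q\le\|x_\Lambda\|_\infty^{q-p}\|x_\Lambda\|_p^p$ extends the bound to all $q\in[p,\infty]$, giving the full Nikolskii inequality with $\gamma=1$.

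For $p>2$, the assumption that $\gamma$ is the smallest even integer with $\gamma\ge p/2$ ensures $p/\gamma\le 2$; and by iterating Lemma \ref{power-polynom}, the positive self-adjoint operator $|x_\Lambda|^\gamma=(x^*_\Lambda x_\Lambda)^{\gamma/2}$ is again a trigonometric polynomial. Applying the base case to $|x_\Lambda|^\gamma$ at exponents $p/\gamma$ and $q/\gamma$, together with the spectral-calculus identity $\bigl\|\,|x_\Lambda|^\gamma\,\bigr\|_r=\|x_\Lambda\|_{\gamma r}^\gamma$ (valid for all $r>0$ because $|x_\Lambda|^\gamma$ is positive), converts the bound into $\|x_\Lambda\|_q\le C^{1/p-1/q}\|x_\Lambda\|_p$, where $C=N(\mathrm{supp}(\widehat{|x_\Lambda|^\gamma}))$.

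The main obstacle I anticipate is the identification of this constant with the stated $C_{\gamma,x_\Lambda}=N(\mathrm{supp}(\widehat{x^\gamma_\Lambda}))$: the power polynomial that the argument naturally uses is $|x_\Lambda|^\gamma$, whose Fourier support lies in $\tfrac{\gamma}{2}(\Lambda-\Lambda)$, whereas the theorem statement uses $x^\gamma_\Lambda$, whose support lies in $\gamma\Lambda$, and the two sets can have different cardinalities. Bridging this gap requires a careful direct computation of Fourier supports via the multiplication rule \eqref{additive2.1}, in the spirit of the proof of Lemma \ref{power-polynom}, either to match the two cardinalities or to adapt the argument so that $x^\gamma_\Lambda$ itself can serve as the base polynomial in the power step.
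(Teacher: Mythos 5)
Your argument is correct and follows essentially the same route as the paper: the same $\ell^1$--$\ell^2$ endpoint bound $\|x_\Lambda\|_\infty\le N^{1/2}\|x_\Lambda\|_2$, the same H\"older self-improvement to $\|x_\Lambda\|_\infty\le N^{1/p}\|x_\Lambda\|_p$ for $0<p\le 2$ and to general $q$, and the same reduction of $p>2$ to the polynomial $|x_\Lambda|^\gamma=(x_\Lambda^*x_\Lambda)^{\gamma/2}$; your formulation of the power step (apply the base case to $|x_\Lambda|^\gamma$ at exponents $p/\gamma,q/\gamma$ and use $\||x_\Lambda|^\gamma\|_r=\|x_\Lambda\|_{\gamma r}^\gamma$) is a cleaner packaging of the paper's cancellation argument, and you harmlessly bypass the paper's separate treatment of $p=2<q<\infty$ via the inverse Hausdorff--Young inequality, which is redundant anyway. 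The obstacle you flag about the constant is not a gap in your proof but a notational slip in the paper: its own Step 3 invokes the endpoint estimate for the polynomial $|x_\Lambda|^\gamma$ and writes the resulting constant as $C_{\gamma,|x_\Lambda|}^{1/2}$, i.e.\ $N(\mathrm{supp}(\widehat{|x_\Lambda|^\gamma}))^{1/2}$, before silently renaming it $C_{\gamma,x_\Lambda}$ in \eqref{additive3.6}; the quantity actually proved (by both arguments) is $N(\mathrm{supp}(\widehat{|x_\Lambda|^\gamma}))$, and the statement's literal definition via $x_\Lambda^\gamma$ should be read as this. No further support computation is needed on your end.
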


\begin{proof} Since the case $p=q$ is clear, we consider the case $p<q.$ We spilt the proof into several steps.

{\it Step 1.} We assume that $p=2$ and $q=\infty$. By  (\ref{additive2.6}) we have 
 \begin{eqnarray}\label{additive3.4}
\|x_\Lambda\|^2_{L^2(\mathbb{T}^d_\theta)}&=&\tau_\theta\left(|x_\Lambda|^2\right)=\tau_\theta\left((x_\Lambda)^*x_\Lambda\right)\overset{ \text{(\ref{additive2.6})} }{=}\sum_{m\in\Lambda} |\widehat{x}_{\Lambda}(m)|^2. 
\end{eqnarray}

By the triangle inequality and using the classical Cauchy-Schwarz inequality for sequences and applying formulas \eqref{additive2.5}, (\ref{trig-polynom}), and  (\ref{additive3.4}), we obtain 
 \begin{eqnarray}\label{additive3.5}
\|x_\Lambda\|_{L^\infty(\mathbb{T}^d_\theta)}&\overset{ \text{(\ref{trig-polynom})} }{=}&\|\sum\limits_{m\in\Lambda} \widehat{x}_{\Lambda}(m)e^\theta_m\|_{L^\infty(\mathbb{T}^d_\theta)}\nonumber\\
&\leq&\sum\limits_{m\in\Lambda} \|\widehat{x}_{\Lambda}(m)e^\theta_m\|_{L^\infty(\mathbb{T}^d_\theta)}\nonumber\\&=& \sum\limits_{m\in\Lambda} |\widehat{x}_{\Lambda}(m)| \|e^\theta_m\|_{L^\infty(\mathbb{T}^d_\theta)}\nonumber\\&\overset{ \text{(\ref{additive2.5})} }{=}& \sum\limits_{m\in\Lambda} |\widehat{x}_{\Lambda}(m)|\cdot 1  
  \nonumber\\
&\leq&C^\frac{1}{2}_{x_{\Lambda}}(\sum\limits_{m\in\Lambda}\left|\widehat{x}_{\Lambda}(m)
\right|^2)^\frac{1}{2} \nonumber\\
  &\overset{ \text{(\ref{additive3.4})} }{=}&C^\frac{1}{2}_{x_{\Lambda}}\|x_\Lambda\|_{L^2(\mathbb{T}^d_\theta)},
\end{eqnarray}
where $C_{x_{\Lambda}}:=\sharp\{m\in\mathbb{Z}^d: \widehat{x}_{\Lambda}(m)\neq 0\}.$

{\it Step 2.} We now suppose that $p=2$ and $2<q< \infty$. Then  
by the inverse Hausdorff-Young inequality (see (\ref{additive3.1'})) and  the  H\"{o}lder inequality with respect to $r=\frac{2}{q'}$ and $\frac{1}{r'}=1-\frac{q'}{2}$ and (\ref{additive3.4}), we obtain  
\begin{eqnarray*}
\|x_{\Lambda}\|_{L^q(\mathbb{T}^d_\theta)}&\overset{ \text{(\ref{additive3.1'})}}{\leq}&
\|\widehat{x}_{\Lambda}\|_{\ell^{q'}(\mathbb{Z}^d)}\\
&=&\left(\sum\limits_{m\in\Lambda}\left|\widehat{x}_{\Lambda}(m)\right|^{q'}\right)^\frac{1}{q'}
\\
&\leq&C^{\frac{1}{q'}-\frac{1}{2}}_{x_{\Lambda}}\left(\sum\limits_{m\in\Lambda}\left|\widehat{x}_{\Lambda}(m)\right|^2\right)^\frac{1}{2}\\
 &\overset{ \text{(\ref{additive3.4}})}{=}&C^{\frac{1}{2}-\frac{1}{q}}_{x_{\Lambda}}\|x_{\Lambda}\|_{L^2(\mathbb{T}^d_\theta)}.
\end{eqnarray*}

{\it Step 3.} Let $p>2$ and $2<p<q\leq \infty$. We assume that $\gamma$ is the smallest even integer such that $\gamma\geq \frac{p}{2}$ and $x^\gamma_{\Lambda} \neq0$. By Remark \ref{powers-trig-polinom}, we obtain that $|x_{\Lambda}|^{2}$ and  $|x_{\Lambda}|^{\gamma}:=(x^{*}_{\Lambda}x_{\Lambda})^{\gamma/2}$ are trigonometric polynomials. Then it follows from Step 1 (see \eqref{additive3.5}) and the non-commutative H\"{o}lder inequality (see \cite[Theorem 1]{Sukochev}) with $\frac{1}{2}=\frac{2}{\infty}+\frac{1}{2}$ that
\begin{eqnarray*}
 \||x_{\Lambda}|^\gamma\|_{L^2(\mathbb{T}^d_\theta)} &=&
 \||x_{\Lambda}|^{\gamma-p/2}|x_{\Lambda}|^{p/2}\|_{L^2(\mathbb{T}^d_\theta)}\\
&\leq&
\||x_{\Lambda}|^{\gamma-p/2}\|_{L^\infty(\mathbb{T}^d_\theta)}\||x_{\Lambda}|^{p/2}\|_{L^2(\mathbb{T}^d_\theta)}\\
&=&\| |x_{\Lambda}|\|^{\gamma-p/2}_{L^\infty(\mathbb{T}^d_\theta)}\||x_{\Lambda}| \|^{p/2}_{L^p(\mathbb{T}^d_\theta)}\\
&=&\| |x_{\Lambda}|^\gamma \|_{L^\infty(\mathbb{T}^d_\theta)}\| |x_{\Lambda}| \|^{-p/2}_{L^\infty(\mathbb{T}^d_\theta)}\| x_{\Lambda} \|^{p/2}_{L^p(\mathbb{T}^d_\theta)}\\
&=&\| |x_{\Lambda}|^\gamma \|_{L^\infty(\mathbb{T}^d_\theta)}\| x_{\Lambda} \|^{-p/2}_{L^\infty(\mathbb{T}^d_\theta)}\| x_{\Lambda} \|^{p/2}_{L^p(\mathbb{T}^d_\theta)}\\
&\overset{ \text{(\ref{additive3.5}})}{\leq}&C_{\gamma,|x_{\Lambda}|}^\frac{1}{2}\| |x_{\Lambda}|^\gamma\|_{L^2(\mathbb{T}^d_\theta)}\| x_{\Lambda}\|^{-p/2}_{L^\infty(\mathbb{T}^d_\theta)}\| x_{\Lambda} \|^{p/2}_{L^p(\mathbb{T}^d_\theta)}.
\end{eqnarray*}
Here and below, we use the fact that $\|x^{\kappa}\|_{L^{\infty}(\mathbb{T}^d_{\theta})}=\|x\|^{\kappa}_{L^{\infty}(\mathbb{T}^d_{\theta})}$ for any normal operators $x\in L^{\infty}(\mathbb{T}^d_{\theta})$ and power $\kappa >0,$ which follows from the spectral theorem. 
Thus, we have 
\begin{eqnarray}\label{additive3.6}
\| x_{\Lambda}\|_{L^\infty(\mathbb{T}^d_\theta)}\overset{\text{\eqref{additive3.5}}}{\leq} C_{\gamma,x_{\Lambda}}^\frac{1}{p} \| x_{\Lambda} \|_{L^p(\mathbb{T}^d_\theta)}.
\end{eqnarray}
Next, suppose  that $p<q<\infty$. Then it follows from (\ref{additive3.6}) that   
\begin{eqnarray*} 
\| x_{\Lambda} \|_{L^q(\mathbb{T}^d_\theta)}&=&\| |x_{\Lambda}|^{1-\frac{p}{q}}|x_{\Lambda}|^{\frac{p}{q}}  \|_{L^q(\mathbb{T}^d_\theta)} \nonumber\\
 &{\leq}& \||x_{\Lambda}|^{1-\frac{p}{q}}\|_{L^\infty(\mathbb{T}^d_\theta)}\||x_{\Lambda}|^{\frac{p}{q}}\|_{L^q(\mathbb{T}^d_\theta)}
 \nonumber\\
 &=& \|x_{\Lambda}\|^{1-\frac{p}{q}}_{L^\infty(\mathbb{T}^d_\theta)}\|x_{\Lambda}\|^{\frac{p}{q}}_{L^p(\mathbb{T}^d_\theta)} \nonumber\\&\overset{ \text{(\ref{additive3.6}})}{\leq}&C_{\gamma,x_{\Lambda}}^{\frac{1}{p} -\frac{1}{q} }\|x_{\Lambda}\|_{L^p(\mathbb{T}^d_\theta)}, 
\end{eqnarray*}
from which the assertion follows.

{\it Step 4.} We assume that  $0 <p\leq  2$ and $p < q \leq\infty,$ and assume $x_{\Lambda}\neq 0.$ Since $\gamma=1,$ repeating the argument in the previous step we obtain
\begin{eqnarray}\label{additive3.6'}
\| x_{\Lambda}\|_{L^\infty(\mathbb{T}^d_\theta)}\overset{\text{\eqref{additive3.6}}}{\leq} C_{x_{\Lambda}}^\frac{1}{p} \| x_{\Lambda} \|_{L^p(\mathbb{T}^d_\theta)}.
\end{eqnarray}
Therefore, it follows from (\ref{additive3.6'}) that   
\begin{eqnarray*} 
\| x_{\Lambda} \|_{L^q(\mathbb{T}^d_\theta)}&=&\| |x_{\Lambda}|^{1-\frac{p}{q}}|x_{\Lambda}|^{\frac{p}{q}}  \|_{L^q(\mathbb{T}^d_\theta)} \nonumber\\
 &{\leq}& \||x_{\Lambda}|^{1-\frac{p}{q}}\|_{L^\infty(\mathbb{T}^d_\theta)}\||x_{\Lambda}|^{\frac{p}{q}}\|_{L^q(\mathbb{T}^d_\theta)}
 \nonumber\\
 &=& \|x_{\Lambda}\|^{1-\frac{p}{q}}_{L^\infty(\mathbb{T}^d_\theta)}\|x_{\Lambda}\|^{\frac{p}{q}}_{L^p(\mathbb{T}^d_\theta)} \nonumber\\&\overset{ \text{(\ref{additive3.6'}})}{\leq}&C_{x_{\Lambda}}^{\frac{1}{p} -\frac{1}{q} }\|x_{\Lambda}\|_{L^p(\mathbb{T}^d_\theta)}. 
\end{eqnarray*}
This completes the proof.
\end{proof}

\begin{rem}We are not interested in the best possible constants in \eqref{additive3.3} for all polynomials as $x_\Lambda.$ However, if $\theta=0,$ then we have the sharpness of the constant in \eqref{additive3.3} in the case $p = 2$ and $q = \infty,$ which is attained on the Dirichlet kernel (see, \cite[Theorem 3.1]{NRT}).
\end{rem}

\section{Besov spaces on quantum tori}\label{sc7}
In this section, we study Besov spaces from the different point of view, and analyse embedding properties of Besov spaces on $\mathbb{T}^d_\theta$ by using the Nikolskii inequality. For the classical case of quasi-Banach Besov spaces, we refer the reader to \cite{Peetre} and \cite{HH1987}. In the definition of Besov spaces we use characteristic functions of dyadic corridors or dyadic cubes as in \cite[Section 3.5.3]{HH1987}. Similar investigations were done in \cite{NRT} for compact homogeneous manifolds. To this end we introduce 
\begin{eqnarray} \label{finite-sets}
\Lambda_0&:=&\{n\in\mathbb{Z}^d: |n|<2\},\nonumber\\
\Lambda_k&:=&\{n\in\mathbb{Z}^d: 2^{k-1}< |n|<2^{k+1} \};\quad k=1,2,\dots,
\end{eqnarray}
where $|n|$ is the Euclidean length on $\mathbb{Z}^d.$ Let $T^{\Lambda_k}$ be the set of all trigonometric polynomials as in \eqref{trig-polynom}. Each element  $x_{\Lambda_k}\in T^{\Lambda_k}$ will be denoted briefly by $x_{k},$ that is, 
$$x_{0}=\widehat{x}_0(0),\,\ x_{k}=\sum_{2^{k-1}< |n|<2^{k+1}}\widehat{x}_k(n)e^\theta_n, \,\ n\in\mathbb{Z}^d \setminus \{0\}, \quad k=1,2,\dots.$$
Since $T^{\Lambda_k}\subset L^p(\mathbb{T}^d_{\theta})\subset\mathcal{D}'(\mathbb{T}^d_\theta)$ for $1\leq p\leq \infty,$ it follows that all trigonometric polynomials as in \eqref{trig-polynom} are elements of $\mathcal{D}'(\mathbb{T}^d_\theta).$
The Fourier transform extends to $\mathcal{D}'(\mathbb{T}^d_\theta).$ For $F\in \mathcal{D}'(\mathbb{T}^d_\theta)$ and $n\in \mathbb{Z}^d,$ define Fourier coefficient
$\widehat{F}(n):=\langle F,(e_n^{\theta})^*\rangle.$ Then the Fourier series of $F$ converges to $F$ according to any (reasonable) summation method
$F=\sum\limits_{n\in \mathbb{Z}^d}\widehat{F}(n)e^\theta_n$ (see \cite{XXY}). In order not to confuse the notations, we will denote the elements of $\mathcal{D}'(\mathbb{T}^d_\theta)$ again by $x.$ 
The following definition is a non-commutative analogue of the definition in \cite[Theorem 2, p.164]{HH1987}.
\begin{definition}\label{Besov-def} Let $0< p,q\leq\infty$ and $r\in\mathbb{R}$. Let $\{\Lambda_k\}_{k=0}^{\infty}$ be the sequence of finite subsets of $\mathbb{Z}^d$ defined in \eqref{finite-sets}. Then the associated Besov space on  $\mathbb{T}^d_\theta$ is defined by 
\begin{eqnarray*}
B_{p,q}^r (\mathbb{T}^d_\theta)&=&\{x\in \mathcal{D}'(\mathbb{T}^d_\theta): \exists 
\{x_k\}_{k=0}^{\infty}\subset C^{\infty}(\mathbb{T}^d_{\theta}) \,\ \text{with}\,\ x_k\in T^{\Lambda_k},k\in\mathbb{Z}_+ , \,\ \text{such that}\\
&x=&\sum_{k=0}^{\infty} x_k \,\ \text{in}\,\ \mathcal{D}'(\mathbb{T}^d_\theta) \,\ \text{and}\,\ \|x\|_{B_{p,q}^r(\mathbb{T}^d_\theta)}<\infty\}, 
\end{eqnarray*}
where 
\begin{eqnarray}\label{def_norm_besov1}
\|x\|_{B_{p,q}^r(\mathbb{T}^d_\theta)}:=\inf\|\{x_k\}_{k\geq 0}\|_{\ell^{q}_{r}(L^p)}=\inf\left(\sum\limits_{k=0}^{\infty}2^{krq}\big\|x_{k}\|_p^q\right)^\frac{1}{q},  \quad q<\infty, 
\end{eqnarray}
 and 
\begin{eqnarray}\label{def_norm_besov2}
\|x\|_{B_{p,\infty}^r(\mathbb{T}^d_\theta)}:=\inf\|\{x_k\}_{k\geq 0}\|_{\ell^{\infty}_{r}(L^p)}=\inf\left(\sup\limits_{k\geq0}2^{kr}\big\|x_{k}\|_p\right), \quad q=\infty,   
\end{eqnarray}
where the infimum is taken over all admissible representations of $\{x_k\}_{k=0}^{\infty}$ as above. 
\end{definition}
\begin{rem} Note that our Besov spaces in the previous definition (even for $0<p<1$) are approximation spaces in the sense of Devore and Lorentz~\cite{DL}. If $\theta=0,$ then this kind of definition was studied in \cite{HH1987}. Furthermore, 
$$\inf\left(\sum\limits_{k=0}^{\infty}2^{krq}\big\|x_{k}\|_p^q\right)^\frac{1}{q},$$
where the infimum is taken over all admissible representations as in Definition \ref{Besov-def}, is an equivalent quasi-norm in Besov spaces on ordinary $d$-torus in terms of Littlewood-Paley decomposition (see \cite[Theorem 2, p.162]{HH1987}). Also, in the general case when $\theta\neq 0$ and $1\leq p,q\leq \infty,$ we conjecture that our definition is equivalent to \cite[Definition 3.1]{XXY}. Indeed, let $\varphi$ be a Schwartz function on $\mathbb{R}^d$ such that the condition (3.1) in \cite{XXY} holds. Then the sequence $\{\varphi(2^{-k}\cdot)\}_{k\geq 0}$ is a Littlewood-Paley decomposition of $\mathbb{T}^d$. As in \cite[Section 3.1]{XXY}, the Fourier multiplier on $\mathcal{D}'(\mathbb{T}^d_{\theta})$ with the symbol $\varphi(2^{-k}\cdot)$ is defined by 
$$\widetilde{\varphi}_{k}*x=\sum_{m\in\mathbb{Z}^d}\varphi(2^{-k}m)\widehat{x}(m)e_{m}^{\theta}.$$ This sum is finite for every $k=0,1,2,\dots$ . In particular, we have
$\widetilde{\varphi}_{k}*x\in T^{\Lambda_k}$ whenever $\text{supp}(\varphi)\subset\Lambda_{k},$ where $\Lambda_{k}$ is defined by \eqref{finite-sets}. In this case, we conjecture that the norm in \eqref{def_norm_besov1} is equivalent to that in \cite[Definition 3.1]{XXY} for $1\leq p,q\leq \infty.$
\end{rem}
 
The following result describes the basic properties of Besov spaces in Definition \ref{Besov-def}. Similar to some of the results below can be found from \cite{XXY} (see for example \cite[Proposition 3.3 and Theorem 6.2]{XXY}) in  Besov spaces on the non-commutative torus which are defined in terms of Littlewood-Paley decomposition in the case when $1\leq p,q\leq \infty.$ Our approach allows us to extend the range of indices including  the case  $0<p,q<1.$
\begin{thm}\label{embed_Besov}Let $0<p,q\leq \infty$ and $r\in \mathbb{R}.$ We have
\begin{enumerate}[{\rm(i)}]
    \item $B_{p,q_1}^{r+\varepsilon}(\mathbb{T}^d_\theta)\hookrightarrow B_{p,q_1}^{r}(\mathbb{T}^d_\theta)\hookrightarrow B_{p,q_2}^{r}(\mathbb{T}^d_\theta)\hookrightarrow B_{p,\infty}^{r}(\mathbb{T}^d_\theta),$ $\varepsilon>0,$ $0<p\leq \infty,$ $0<q_1\leq q_2\leq\infty;$
    \item $B_{p,q_1}^{r+\varepsilon}(\mathbb{T}^d_\theta)\hookrightarrow B_{p,q_2}^{r}(\mathbb{T}^d_\theta),$ $\varepsilon>0,$ $0<p\leq \infty,$ $1\leq q_2< q_1\leq\infty;$
     \item $B_{p_1,q}^{r_1}(\mathbb{T}^d_\theta)\hookrightarrow B_{p_2,q}^{r_2}(\mathbb{T}^d_\theta),$ $0<p_1\leq p_2 \leq \infty,$ $0<q<\infty,$ $r_2=r_1-d\Big(\frac{1}{p_1}-\frac{1}{p_2}\Big);$
      \item $B_{p_1,1}^{r}(\mathbb{T}^d_\theta)\hookrightarrow L^{p_2}(\mathbb{T}^d_\theta),$ $0<p_1<p_2 \leq \infty,$ $r=d\Big(\frac{1}{p_1}-\frac{1}{p_2}\Big);$
      \item $B_{p,q}^{r}(\mathbb{T}^d_\theta)\hookrightarrow L^{q}(\mathbb{T}^d_\theta),$ $1<p<q < \infty,$ $r=d\Big(\frac{1}{p}-\frac{1}{q}\Big);$
      \item $B_{p,q}^{r}(\mathbb{T}^d_\theta)\hookrightarrow L^{p}(\mathbb{T}^d_\theta),$ $1\leq p\leq \infty,$ $0<q\leq \infty,$ $r>0;$
      \item $B_{p,q}^{r}(\mathbb{T}^d_\theta)\hookrightarrow L^{1}(\mathbb{T}^d_\theta),$  $0<p<1,$ $0<q\leq\infty,$ $r>d(\frac{1}{p}-1).$
\end{enumerate}
    \end{thm}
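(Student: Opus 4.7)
The plan is to separate the seven embeddings into two groups. The ``monotonicity'' embeddings (i), (ii), and (vi) are structural and rely only on the weighted sequence-space definition of the norm in \eqref{def_norm_besov1}, while the ``regularity-for-integrability'' embeddings (iii), (iv), (v), (vii) all follow by feeding the Nikolskii inequality of Theorem \ref{thm3.2} into the Besov norm in a single, uniform way.

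For (i), the first inclusion is termwise because $2^{kr}\leq 2^{k(r+\varepsilon)}$, and the remaining two are the standard embeddings $\ell^{q_1}\hookrightarrow\ell^{q_2}\hookrightarrow\ell^{\infty}$ applied to the sequence $\{2^{kr}\|x_k\|_{L^p}\}_k$. For (ii), I would write $2^{kr}\|x_k\|_{L^p}=(2^{k(r+\varepsilon)}\|x_k\|_{L^p})\cdot 2^{-k\varepsilon}$ and apply H\"older with exponents $q_1/q_2$ and $(q_1/q_2)'$, so that the residual geometric sum in $k$ converges because $\varepsilon>0$. For (vi) with $1\leq p\leq \infty$, I would start from the triangle inequality $\|x\|_{L^p}\leq \sum_k\|x_k\|_{L^p}$, split $\|x_k\|_{L^p}=2^{-kr}\cdot 2^{kr}\|x_k\|_{L^p}$, and apply H\"older when $q\geq 1$ (using $r>0$ to sum $2^{-krq'}$) or the inclusion $\ell^q\hookrightarrow\ell^1$ when $0<q\leq 1$, taking the infimum over admissible decompositions at the end.

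The Nikolskii input driving the remaining items is the following dyadic Bernstein-type estimate. Since $\Lambda_k\subset\{n:|n|\leq 2^{k+1}\}$ by \eqref{finite-sets}, any $x_k\in T^{\Lambda_k}$ has $x_k^{\gamma}$ with Fourier support in the sumset $\gamma\Lambda_k\subset\{n:|n|\leq \gamma 2^{k+1}\}$, so $C_{\gamma,x_k}=N(\mathrm{supp}(\widehat{x_k^{\gamma}}))\lesssim 2^{kd}$; Theorem \ref{thm3.2} then yields
\begin{equation}\label{bernstein-plan}
\|x_k\|_{L^{p_2}(\mathbb{T}^d_\theta)}\lesssim 2^{kd(1/p_1-1/p_2)}\|x_k\|_{L^{p_1}(\mathbb{T}^d_\theta)}, \quad 0<p_1\leq p_2\leq \infty.
\end{equation}
For (iii), inserting \eqref{bernstein-plan} into the norm in \eqref{def_norm_besov1} converts the weight $2^{kr_2}$ into $2^{k(r_2+d(1/p_1-1/p_2))}=2^{kr_1}$, giving the embedding after passing to the infimum. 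For (iv) with $p_2\geq 1$, combine \eqref{bernstein-plan} with the triangle inequality in $L^{p_2}$ to pass an $\ell^1$-weighted $L^{p_1}$ norm onto $L^{p_2}$. For (vii), \eqref{bernstein-plan} with $p_2=1$ gives $\|x_k\|_{L^1}\lesssim 2^{kd(1/p-1)}\|x_k\|_{L^p}$; writing $2^{kd(1/p-1)}=2^{-k(r-d(1/p-1))}\cdot 2^{kr}$, the factor $2^{-k(r-d(1/p-1))}$ is summable by hypothesis, and H\"older (for $q\geq 1$) or $\ell^q\hookrightarrow\ell^1$ (for $0<q\leq 1$) closes the estimate.

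The subtlest item is (v). Chaining (iii) reduces it to the endpoint embedding $B^{0}_{q,q}\hookrightarrow L^q$ for $1<q<\infty$, which is not automatic, since in the Littlewood--Paley framework one has $L^q=F^{0}_{q,2}\neq B^{0}_{q,q}=F^{0}_{q,q}$ for $q\neq 2$. The route I would take is real interpolation between the strong embedding $B^{r}_{p,1}\hookrightarrow L^q$ from (iv) and a weak-type companion $B^{r}_{p,\infty}\hookrightarrow L^{q,\infty}$, combined with the reiteration identities $(B^{r}_{p,1},B^{r}_{p,\infty})_{\eta,q}=B^{r}_{p,q}$ and $(L^q,L^{q,\infty})_{\eta,q}=L^q$ to land in $L^q$. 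The weak-type companion can presumably be derived by combining \eqref{bernstein-plan} with the H\"ormander multiplier theorem (Theorem \ref{thm5.1}) applied to the sharp cutoffs $\chi_{\Lambda_k}$, and the interpolation identities should follow from the machinery developed in Section \ref{inter}; verifying these two auxiliary ingredients in the quantum-torus setting is where I expect the main technical difficulty to lie.
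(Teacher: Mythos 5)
Parts (i)--(iv), (vi) and (vii) of your proposal follow the paper's proof essentially step for step: the $\ell^{q}$-monotonicity and termwise weight comparison for (i), H\"older against the convergent geometric series $\sum_k 2^{-k\varepsilon q_1 q_2/(q_1-q_2)}$ for (ii), the dyadic Bernstein/Nikolskii estimate $\|x_k\|_{p_2}\lesssim 2^{kd(1/p_1-1/p_2)}\|x_k\|_{p_1}$ inserted into \eqref{def_norm_besov1} for (iii)--(iv), and the split $2^{-kr}\cdot 2^{kr}$ with H\"older (resp.\ $\ell^{q}\hookrightarrow\ell^{1}$ for $q\le 1$) for (vi)--(vii). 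Your justification that $C_{\gamma,x_k}=N(\mathrm{supp}(\widehat{x_k^{\gamma}}))\lesssim 2^{kd}$ via the $\gamma$-fold sumset of $\Lambda_k$ is a detail the paper leaves implicit and is worth recording, since the constant in Theorem \ref{thm3.2} depends on the polynomial.

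The genuine gap is in part (v). Your route needs two ingredients that are proved nowhere and are not easy consequences of the paper's toolbox: the weak-type endpoint $B^{r}_{p,\infty}(\mathbb{T}^d_\theta)\hookrightarrow L^{q,\infty}(\mathbb{T}^d_\theta)$, and the equal-smoothness identity $(B^{r}_{p,1},B^{r}_{p,\infty})_{\eta,q}=B^{r}_{p,q}$ (Theorem \ref{intepolation_Bru}(ii) requires distinct smoothness indices $r_1<r_0$, so it does not cover this couple). The weak-type companion is precisely the hard endpoint of the Besov--Sobolev embedding; applying Theorem \ref{thm5.1} to the sharp cutoffs $\chi_{\Lambda_k}$ gives strong $L^{p}\to L^{q}$ bounds for the individual blocks but does not by itself produce an $L^{q,\infty}$ bound for the full series from $\ell^{\infty}$-type Besov data. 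The paper sidesteps both difficulties by interpolating in the smoothness parameter instead of the fine index: choose $r_0<r<r_1$ with $r_i=d(1/p-1/q_i)$ (hence $q_0<q<q_1$), so that both endpoints $B^{r_i}_{p,1}\hookrightarrow L^{q_i}$ are instances of the already-proven (iv); then $(B^{r_0}_{p,1},B^{r_1}_{p,1})_{\eta,q}=B^{r}_{p,q}$ (a Bergh--L\"ofstr\"om~5.6.1-type identity, where the endpoint fine indices are irrelevant because $r_0\neq r_1$) and $(L^{q_0}(\mathbb{T}^d_\theta),L^{q_1}(\mathbb{T}^d_\theta))_{\eta,q}=L^{q}(\mathbb{T}^d_\theta)$ by Pisier--Xu. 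Only strong-type endpoints are used. You should either switch to this argument or actually supply the weak-type estimate; as written, (v) is not proved.

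A minor shared caveat: in (iv) both you and the paper use the triangle inequality in $L^{p_2}$, which requires $p_2\ge 1$; for $0<p_2<1$ the $p_2$-triangle inequality only bounds $\|x\|_{L^{p_2}}$ by the $\ell^{p_2}$ (not $\ell^{1}$) sum of $2^{kr}\|x_k\|_{p_1}$, so the target $B^{r}_{p_1,1}$ needs a separate justification in that range. You at least flag the restriction explicitly; the paper does not.
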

\begin{proof}
Let us prove (i). 
For any sequence $\{b(k)\}_{k\geq 0 }$ such that $b(k)\geq 0,$ $k\geq 0$ and for any $\varepsilon>0,$  we have the following simple inequality
$$\sup\limits_{k\geq 0}b(k)\leq \Big(\sum_{k\geq 0}b(k)^{q_2}\Big)^{1/q_2}\leq \Big(\sum_{k\geq 0}b(k)^{q_1}\Big)^{1/q_1} \leq \Big(\sum_{k\geq 0}2^{k\varepsilon q_1}b(k)^{q_1}\Big)^{1/q_1}.$$
Fix $x_k$ and setting $b(k):=2^{kr}\|x_k\|_p$ for $k\geq 0$ and $r\in\mathbb{R},$ and applying above inequalities we obtain
\begin{eqnarray*}
\|x\|_{B_{p,\infty}^r(\mathbb{T}^d_\theta)}&\overset{\text{(\ref{def_norm_besov2})}}{=}&\inf\left(\sup\limits_{k\geq0}2^{kr}\big\|x_k\|_p\right)\\
&\leq& \inf\Big(\sum_{k\geq 0}2^{krq_2}\|x_k\big\|_p^{q_2}\Big)^{1/q_2}\overset{\text{(\ref{def_norm_besov1})}}{=}\|x\|_{B_{p,q_2}^r(\mathbb{T}^d_\theta)}\\
&\leq& \inf\Big(\sum_{k\geq 0}2^{krq_1}\|x_k\|_p^{q_1}\Big)^{1/q_1}\overset{\text{(\ref{def_norm_besov1})}}{=}\|x\|_{B_{p,q_1}^r(\mathbb{T}^d_\theta)}\\
&\leq& \inf\Big(\sum_{k\geq 0}2^{k\varepsilon q_1}2^{krq_1}\|x_k\big\|_p^{q_1}\Big)^{1/q_1}\overset{\text{(\ref{def_norm_besov1})}}{=}\|x\|_{B_{p,q_1}^{r+\varepsilon}(\mathbb{T}^d_\theta)}
\end{eqnarray*}
for $x\in B_{p,q_1}^{r+\varepsilon}(\mathbb{T}^d_\theta),$ which is the desired result.

(ii) Let $1\leq q_2< q_1<\infty$ and $\varepsilon>0.$ Then by the H\"{o}lder inequality with respect to $1=\frac{q_2}{q_1}+\frac{q_1-q_2}{q_1}=\frac{1}{\frac{q_1}{q_2}}+\frac{1}{\frac{q_1}{q_1-q_2}},$  we have
\begin{eqnarray}\label{part-ii}
\Big(\sum_{k\geq 0}2^{kq_2r}\|x_k\big\|_p^{q_2}\Big)^{1/q_2}&=&\Big(\sum_{k\geq 0}\frac{2^{kq_2(r+\varepsilon)}}{2^{kq_2\varepsilon}}\|x_k\big\|_p^{q_2}\Big)^{1/q_2}\nonumber\\
&\leq& \Big(\sum_{k\geq 0}2^{k(r+\varepsilon)q_1}\|x_k\big\|_p^{q_1}\Big)^{1/q_1}\cdot\Big(\sum_{k\geq 0}2^{-\frac{kq_1q_2 \varepsilon}{q_1 -q_2}}\Big)^{\frac{q_1 -q_2}{q_1q_2}}.
\end{eqnarray}
Since the series $\sum\limits_{k\geq 0}2^{-\frac{kq_1q_2 \varepsilon}{q_1 -q_2}}$ converges, we have 
$$\Big(\sum\limits_{k\geq 0}2^{-\frac{kq_1q_2 \varepsilon}{q_1 -q_2}}\Big)^{\frac{q_1 -q_2}{q_1q_2}}\leq C<\infty$$
for some constant $C>0.$ By taking infimum from both side of \eqref{part-ii} over such $\{x_k\}_{k=0}^{\infty},$ we find  
\begin{eqnarray*}
\|x\|_{B_{p,q_2}^r(\mathbb{T}^d_\theta)}&\leq&C\cdot\|x\|_{B_{p,q_1}^{r+\varepsilon}(\mathbb{T}^d_\theta)}
\end{eqnarray*}
for all $x\in B_{p,q_1}^{r+\varepsilon}(\mathbb{T}^d_\theta).$

(iii) Let $0<p_1\leq p_2 \leq \infty,$  $0<q<\infty,$ and $ r_2=r_1-d\Big(\frac{1}{p_1}-\frac{1}{p_2}\Big).$ Then  by the Nikolskii inequality from Theorem \ref{thm3.2} we have 
\begin{eqnarray*}
\sum_{k\geq 0}2^{kqr_2}\|x_k\|_{p_2}^{q}\overset{\text{(\ref{additive3.3})}}{\lesssim}\sum_{k\geq 0}2^{kqr_2}\cdot 2^{kqd(\frac{1}{p_1}-\frac{1}{p_2})}\|x_k\|_{p_1}^{q}
=\sum_{k\geq 0}2^{kq\big(r_2+d(\frac{1}{p_1}-\frac{1}{p_2})\big)}\|x_k\|_{p_1}^{q}. 
\end{eqnarray*}
 Thus,    taking infimum again over such $\{x_k\}_{k=0}^{\infty},$ we obtain
 $$ 
\|x\|^q_{B_{p_2,q}^{r_2}(\mathbb{T}^d_\theta)}\lesssim\|x\|^q_{B_{p_1,q}^{r_1}(\mathbb{T}^d_\theta)}\quad  \forall x \in B_{p_1,q}^{r_1},(\mathbb{T}^d_\theta). 
$$

(iv) By the Nikolskii inequality from Theorem \ref{thm3.2} we obtain
\begin{eqnarray*}\|x\|_{L^{p_2}(\mathbb{T}^d_\theta)}& \leq&\Big\|\sum_{k\geq 0}x_k\Big\|_{p_2}\lesssim\sum_{k\geq 0}\|x_k\|_{p_2}\lesssim\sum_{k\geq 0}2^{kd(\frac{1}{p_1}-\frac{1}{p_2})}\|x_k\|_{p_1} 
\end{eqnarray*}
where $r=d(\frac{1}{p_1}-\frac{1}{p_2}).$ Then, taking infimum   over  $\{x_k\}_{k=0}^{\infty},$ we have  
$$
\|x\|_{L^{p_2}(\mathbb{T}^d_\theta)} \lesssim \|x\|_{B_{p_1,1}^{r}(\mathbb{T}^d_\theta)},\quad \forall x\in B_{p_1,1}^{r}(\mathbb{T}^d_\theta).
$$

(v) follows from (iv). Indeed, let $T$ be such that $T:B_{p,1}^{r}(\mathbb{T}^d_\theta)\hookrightarrow L^{q}(\mathbb{T}^d_\theta).$ Hence, for parameters $p,q,$ and $r$ one can find couples $(q_0, r_0),$ $(q_1,r_1)$ and $\eta\in (0,1)$ satisfying the following conditions
$$d\Big(\frac{1}{p}-\frac{1}{q_0}\Big)=r_0, \quad d\Big(\frac{1}{p}-\frac{1}{q_1}\Big)=r_1, \quad r_0<r<r_1,$$
and 
$$r=(1-\eta)r_0+\eta r_1, \quad \frac{1}{q}=\frac{1-\eta}{q_0}+\frac{\eta}{q_1}.$$
By (iv) we have $T:B_{p,1}^{r_0}(\mathbb{T}^d_\theta)\hookrightarrow L^{q_0}(\mathbb{T}^d_\theta)$ and $T:B_{p,1}^{r_1}(\mathbb{T}^d_\theta)\hookrightarrow L^{q_1}(\mathbb{T}^d_\theta).$ Then repeating the argument in \cite[Theorem 5.6.1, p. 122]{BL1976}, which is also true in our setting, we obtain $$B_{p,q}^{r}(\mathbb{T}^d_\theta)=\Big(B_{p,1}^{r_0}(\mathbb{T}^d_\theta),B_{p,1}^{r_1}(\mathbb{T}^d_\theta)\Big)_{\eta,q}.$$ On the other hand, by \cite[Formula (2.2)]{PXu} we have $$L^{q}(\mathbb{T}^d_\theta)=\Big(L^{q_0}(\mathbb{T}^d_\theta),L^{q_1}(\mathbb{T}^d_\theta)\Big)_{\eta,q}.$$ Thus, it follows that
$$T:B_{p,q}^{r}(\mathbb{T}^d_\theta)=\Big(B_{p,1}^{r_0}(\mathbb{T}^d_\theta),B_{p,1}^{r_1}(\mathbb{T}^d_\theta)\Big)_{\eta,q}\hookrightarrow \Big(L^{q_0}(\mathbb{T}^d_\theta),L^{q_1}(\mathbb{T}^d_\theta)\Big)_{\eta,q}=L^{q}(\mathbb{T}^d_\theta),$$
$r=d(\frac{1}{p}-\frac{1}{q}).$ In other words, we have 
$$B_{p,q}^{r}(\mathbb{T}^d_\theta)\hookrightarrow L^{q}(\mathbb{T}^d_\theta).$$

 (vi) 
Let $r> 0,$ $1\leq p\leq\infty,$ and  $0<q\leq\infty.$ Then by the H\"{o}lder inequality with respect to $1=\frac{1}{q}+\frac{1}{q'}$ and \eqref{def_norm_besov1},  we find
\begin{eqnarray*}
\|x\|_{L^p(\mathbb{T}^d_\theta)}
&\leq&\|\sum\limits_{k\geq0} x_k\big\|_p\lesssim\sum\limits_{k\geq0}\|x_{k} \|_{p}=\sum\limits_{k\geq0} 2^{rk}2^{-rk}\|x_{k}  \|_{p}\\
&\leq&\Big(\sum\limits_{k\geq0}2^{-q'rk}  \Big)^\frac{1}{q'}
\Big(\sum\limits_{k\geq0} 2^{qrk}\|x_{k} \big\|^q_{p}\Big)^\frac{1}{q} \quad\text{for}\quad1\leq q<\infty\\
&\lesssim&  \Big(\sum\limits_{k\geq0}2^{-rk}  \Big) 
 \sup\limits_{k\geq0} 2^{rk}\|x_{k} \big\|_{p}   \quad \text{for} \quad q=\infty \\
&\lesssim& \sum\limits_{k\geq0} \left(2^{qrk} \|x_{k} \|^q_{p}\right)^\frac{1}{q} \leq 
\big(\sum\limits_{k\geq0} 2^{qrk}\|x_{k}  \|^q_{p}\big)^\frac{1}{q} \quad\text{for}  \quad0<q<1.  
\end{eqnarray*}
Here, for the case $0<q<1,$ we use the inequality in \cite[Exercise 1.1.4 (b), p. 11]{G2008}. Then, taking infimum from both sides of above inequalities  over all such $\{x_k\}_{k=0}^{\infty},$ we obtain
\begin{eqnarray*}
\|x\|_{L^p(\mathbb{T}^d_\theta)} 
\lesssim \|x\|_{B_{p,q}^{r}(\mathbb{T}^d_\theta)}, \quad x\in B_{p,q}^{r}(\mathbb{T}^d_\theta). 
\end{eqnarray*}

(vii) Let $r>d(\frac{1}{p}-1),$ $0<p<1,$ and $0<q\leq\infty$. Then applying the Nikolskii inequality in Theorem \ref{thm3.2} with respect to $\Lambda_k$, we have 
\begin{eqnarray*}
\|x\|_{L^1(\mathbb{T}^d_\theta)} 
\lesssim\sum\limits_{k\geq0}\|x_{k} \|_{1}\leq 2^{d(\frac{1}{p}-1)k}\sum\limits_{k\geq0}  \|x_{k} \|_{p} \leq 
\sum\limits_{k\geq0} 2^{rk}\|x_{k}  \|_{p}. 
\end{eqnarray*}
Now, repeating the argument in (vi) we obtain
\begin{eqnarray*}
\inf\left(\sum\limits_{k\geq0} 2^{rk}\|x_{k}  \|_{p}  \right)\overset{\text{(\ref{def_norm_besov1})}}{\lesssim} \|x\|_{B_{p,q}^{r}(\mathbb{T}^d_\theta)}. 
\end{eqnarray*}
Hence, 
\begin{eqnarray*}
\|x\|_{L^1(\mathbb{T}^d_\theta)} 
\lesssim\|x\|_{B_{p,q}^{r}(\mathbb{T}^d_\theta)},  \quad x\in B_{p,q}^{r}(\mathbb{T}^d_\theta),
\end{eqnarray*}
completing the proof.
\end{proof}

\section{Wiener and $\beta$-Wiener spaces on quantum tori}\label{Wiener-spaces}
In this section, we define quantum analogue of Wiener and $\beta$-Wiener spaces. We also study embedding properties between Besov and these spaces. For the classical case when $d=1$ and for details about these spaces we refer to Kahane's book \cite{K-book} (see also  \cite{BLT, Beur, NT3}, \cite[Ch.6]{TB} for $d\geq2$) and for these spaces on homogeneous manifolds we refer the reader to \cite{NRT}. 
\begin{definition} For $k\in\mathbb{Z}_+,$ let $\Lambda_k$ be as in section \ref{sc7}. Define the Wiener space on non-commutative torus as follows:
\begin{eqnarray*}
A(\mathbb{T}^d_\theta)&=&\{x\in \mathcal{D}'(\mathbb{T}^d_\theta):  \exists 
\{x_k\}_{k=0}^{\infty}\subset C^{\infty}(\mathbb{T}^d_{\theta}) \,\ \text{with}\,\ x_k\in T^{\Lambda_k},k\in\mathbb{Z}_+ \,\ \text{such that}\\
&x\overset{\mathcal{D}'}{=}&\sum_{k=0}^{\infty} x_k \,\ \text{and}\,\ \|x\|_{A(\mathbb{T}^d_\theta)}=\inf\left(\sum_{k=0}^{\infty} \sum\limits_{n\in\Lambda_k}|\widehat{x}_k(n)|\right)<\infty\},
\end{eqnarray*}
where the infimum is taken over all admissible representations of such $\{x_k\}_{k=0}^{\infty}.$
\end{definition}
In the classical case when $\theta=0$, for periodic functions such spaces on $\mathbb{T}^d$ have been investigated, for example, in \cite{BLT, Beur, NT3}, and \cite[Ch.6]{TB}. Our definition is approximation variant of Wiener spaces. In particular, when $\theta=0$ the space in the above definition coincides with the classical Wiener space on $\mathbb{T}^d$ with equivalent norms.
\begin{thm}\label{wiener-space} We have
$$\|x\|_{A(\mathbb{T}_{\theta}^d)}\lesssim\|x\|_{B_{2,1}^{d/2}(\mathbb{T}_{\theta}^d)}, \quad x\in B_{2,1}^{d/2}(\mathbb{T}_{\theta}^d).$$
\end{thm}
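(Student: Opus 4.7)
The plan is to argue term-by-term on each dyadic block: given any admissible representation $x = \sum_{k \geq 0} x_k$ with $x_k \in T^{\Lambda_k}$, control the inner sum $\sum_{n \in \Lambda_k} |\widehat{x}_k(n)|$ using Cauchy--Schwarz together with the Plancherel identity \eqref{additive2.8}, and then sum in $k$ and pass to the infimum.

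More precisely, since $\Lambda_k$ is a finite set, by the classical Cauchy--Schwarz inequality we have
\begin{equation*}
    \sum_{n \in \Lambda_k} |\widehat{x}_k(n)| \;\leq\; N(\Lambda_k)^{1/2}\left(\sum_{n \in \Lambda_k} |\widehat{x}_k(n)|^2\right)^{1/2}.
\end{equation*}
By the Plancherel identity \eqref{additive2.8} (applied to $x_k \in T^{\Lambda_k} \subset L^2(\mathbb{T}^d_\theta)$), the right-most factor equals $\|x_k\|_{L^2(\mathbb{T}^d_\theta)}$. Meanwhile, by the definition of $\Lambda_k$ in \eqref{finite-sets} (an annulus of inner radius $2^{k-1}$ and outer radius $2^{k+1}$ in $\mathbb{Z}^d$), a direct lattice-point count gives $N(\Lambda_k) \lesssim 2^{kd}$ with a constant depending only on $d$. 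Combining these yields
\begin{equation*}
    \sum_{n \in \Lambda_k} |\widehat{x}_k(n)| \;\lesssim\; 2^{kd/2}\,\|x_k\|_{L^2(\mathbb{T}^d_\theta)}.
\end{equation*}

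Summing over $k \geq 0$ gives
\begin{equation*}
    \sum_{k=0}^\infty \sum_{n \in \Lambda_k} |\widehat{x}_k(n)| \;\lesssim\; \sum_{k=0}^\infty 2^{kd/2}\,\|x_k\|_{L^2(\mathbb{T}^d_\theta)},
\end{equation*}
which shows that $\{x_k\}_{k \geq 0}$ is also an admissible representation for the Wiener norm. Taking the infimum over all admissible dyadic representations of $x$ then yields
\begin{equation*}
    \|x\|_{A(\mathbb{T}^d_\theta)} \;\lesssim\; \|x\|_{B^{d/2}_{2,1}(\mathbb{T}^d_\theta)},
\end{equation*}
as required.

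The proof is essentially routine once one has Plancherel and the correct volume estimate for $\Lambda_k$; I do not expect a genuine obstacle. The only point requiring a little care is to verify that the exponent $d/2$ is exactly the one produced by combining the Cauchy--Schwarz loss $N(\Lambda_k)^{1/2} \asymp 2^{kd/2}$ with the $\ell^2$-Plancherel identity, which matches the Besov weight and therefore allows the dyadic sum to be absorbed into $\|x\|_{B^{d/2}_{2,1}}$.
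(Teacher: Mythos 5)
Your argument is correct and coincides with the paper's own proof: both apply Cauchy--Schwarz (the paper calls it H\"older) on each dyadic block, use Plancherel to convert the $\ell^2$-sum of Fourier coefficients into $\|x_k\|_{L^2(\mathbb{T}^d_\theta)}$, bound $N(\Lambda_k)\lesssim 2^{kd}$, and pass to the infimum over admissible representations. No differences worth noting.
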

\begin{proof}Write
$$\|x\|_{A(\mathbb{T}_{\theta}^d)}\leq\sum_{k=0}^{\infty}F_k,$$
where $F_k:=\sum\limits_{n\in\Lambda_k}|\widehat{x}_k(n)|, k\geq 0.$  Hence, by the H\"{o}lder inequality and the Plancherel identity \eqref{additive2.8} we have
\begin{eqnarray*}
F_k\leq \Big(\sum\limits_{n\in\Lambda_k}1\Big)^{1/2} \Big(\sum\limits_{n\in\Lambda_k}|\widehat{x}_k(n)|^2\Big)^{1/2} \overset{\text{(\ref{additive2.8})}}{\lesssim}2^{k\frac{d}{2}}\|x_k\|_{2}.    
\end{eqnarray*}

Therefore,
\begin{eqnarray*}\|x\|_{A(\mathbb{T}_{\theta}^d)}=\inf\left( \sum_{k=0}^{\infty}F_k\right)
\lesssim \inf\left(\sum_{k=0}^{\infty}2^{k\frac{d}{2}}\|x_k\|_{2}\right):=\|x\|_{B_{2,1}^{d/2}(\mathbb{T}_{\theta}^d)},
\end{eqnarray*}
which completes the proof.
\end{proof}
For $0<\beta<\infty,$ let us now study the $\beta$-absolute convergence of the Fourier series on non-commutative torus, which would be defined as in the case $\beta=1$ by
$$
A^{\beta}(\mathbb{T}^d_\theta)=\{x\in \mathcal{D}'(\mathbb{T}^d_\theta):  \|x\|_{A^{\beta}(\mathbb{T}^d_\theta)}=\inf\Big(\sum_{k=0}^{\infty} \sum\limits_{n\in\Lambda_k}|\widehat{x}_k(n)|^{\beta}\Big)^{1/\beta} <\infty\},
$$
where the infimum is taken over all admissible representations of such $\{x_k\}_{k=0}^{\infty}.$
\begin{thm}\label{beta-wiener-space}Let $1<p\leq2.$ Then 
$$\|x\|_{A^{\beta}(\mathbb{T}_{\theta}^d)}\lesssim\|x\|_{B_{p,\beta}^{s d}(\mathbb{T}_{\theta}^d)}, \quad x\in B_{p,\beta}^{\alpha d}(\mathbb{T}_{\theta}^d),$$
for any $s>0$ and $\beta=(s+\frac{1}{p'})^{-1}.$
\end{thm}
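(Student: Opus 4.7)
The plan is to reduce the claim to a per-block estimate and then combine Hausdorff--Young (Theorem \ref{thm3.1}) with a discrete Hölder inequality on the Fourier support $\Lambda_k$. Concretely, fix any admissible decomposition $x=\sum_{k\geq 0} x_k$ with $x_k\in T^{\Lambda_k}$; I will show that for every $k$,
\[
\sum_{n\in\Lambda_k}|\widehat{x}_k(n)|^{\beta}\;\lesssim\; 2^{ksd\beta}\,\|x_k\|_{L^p(\mathbb{T}^d_\theta)}^{\beta},
\]
with the implicit constant independent of $k$, and then sum in $k$ and take the infimum.

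For the per-block estimate, I would first note that the assumption $s>0$ together with $1<p\le 2$ gives $\beta=(s+1/p')^{-1}<p'$, so Hölder's inequality on $\Lambda_k$ with conjugate exponents $p'/\beta$ and $p'/(p'-\beta)$ yields
\[
\sum_{n\in\Lambda_k}|\widehat{x}_k(n)|^{\beta}
\;\leq\;\Bigl(\sum_{n\in\Lambda_k}|\widehat{x}_k(n)|^{p'}\Bigr)^{\beta/p'} N(\Lambda_k)^{1-\beta/p'}.
\]
Next I apply Theorem \ref{thm3.1} to $x_k\in L^p(\mathbb{T}^d_\theta)$, which gives $\bigl(\sum_{n}|\widehat{x}_k(n)|^{p'}\bigr)^{1/p'}\leq \|x_k\|_{L^p}$, so the first factor is at most $\|x_k\|_{L^p}^{\beta}$. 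The counting bound $N(\Lambda_k)\lesssim 2^{kd}$ (from the definition \eqref{finite-sets}) controls the second factor by $2^{kd(1-\beta/p')}$. Finally, the exponent identity $1-\beta/p'=s\beta$ (which is precisely the relation $1/\beta=s+1/p'$ rearranged) gives the desired $2^{ksd\beta}$.

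Summing over $k$ then produces
\[
\sum_{k\geq 0}\sum_{n\in\Lambda_k}|\widehat{x}_k(n)|^{\beta}
\;\lesssim\;\sum_{k\geq 0} 2^{ksd\beta}\|x_k\|_{L^p(\mathbb{T}^d_\theta)}^{\beta},
\]
so raising to the power $1/\beta$ and taking the infimum over admissible decompositions of $x$ on the right (using Definition \ref{Besov-def}) gives the claim, since the left-hand side dominates $\|x\|_{A^\beta(\mathbb{T}^d_\theta)}$ by definition of the infimum there.

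The only subtlety is the bookkeeping of exponents: I need to verify in the write-up that $\beta<p'$ so Hölder is applicable (immediate from $s>0$), that the arithmetic $d(1-\beta/p')=sd\beta$ indeed holds, and that the infimum on the Besov side and the infimum on the $A^\beta$ side are compatible (they are, because the same decomposition $\{x_k\}$ is used for both sides of the pointwise-in-$k$ estimate). No noncommutative machinery beyond Hausdorff--Young is needed, and the hyper-finite structure of $L^\infty(\mathbb{T}^d_\theta)$ plays no role here beyond the validity of Theorem \ref{thm3.1}.
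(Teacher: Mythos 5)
Your proposal is correct, and for the range $\beta<2$ it is literally the argument in the paper: Hölder on $\Lambda_k$ with exponents $\tfrac{p'}{\beta}$ and $\tfrac{p'}{p'-\beta}$, the Hausdorff--Young inequality $\bigl(\sum_n|\widehat{x}_k(n)|^{p'}\bigr)^{1/p'}\le\|x_k\|_{L^p}$, the counting bound $N(\Lambda_k)\lesssim 2^{kd}$, and the exponent identity $d(1-\beta/p')=sd\beta$. Where you genuinely diverge is that the paper splits into two cases: for $\beta\ge 2$ it instead applies Hausdorff--Young at the exponent $\beta'$ (so that $\sum_n|\widehat{x}_k(n)|^\beta\le\|x_k\|_{L^{\beta'}}^\beta$) and then invokes the Nikolskii inequality of Theorem \ref{thm3.2} for the pair $(L^{\beta'},L^p)$ with $p<\beta'$ to trade $\|x_k\|_{\beta'}$ for $2^{kd(\frac1p-\frac{1}{\beta'})}\|x_k\|_p$. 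Both routes land on exactly the same per-block bound $F_{k,\beta}\lesssim 2^{ksd\beta}\|x_k\|_p^\beta$, since $\beta(\tfrac1p-\tfrac1{\beta'})=1-\tfrac{\beta}{p'}$. Your observation that the Hölder step only needs $\beta<p'$ (guaranteed by $s>0$) rather than $\beta<2$ lets you treat all $\beta$ uniformly, which is a mild but real simplification: it removes the case distinction and removes the dependence on the Nikolskii inequality, at the cost of nothing (the constants and exponents are identical). The infimum bookkeeping at the end is handled correctly, since the per-block estimate holds for every admissible decomposition simultaneously.
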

\begin{proof}
As previous case, write
$$\|x\|^{\beta}_{A^{\beta}(\mathbb{T}_{\theta}^d)}=\inf\left(\sum_{k=0}^{\infty}F_{k,\beta}\right),$$
where $F_{k,\beta}:=\sum\limits_{n\in\Lambda_k}|\widehat{x}_k(n)|^{\beta}, k\geq0.$  
First, assume that $\beta\equiv(s+\frac{1}{p'})^{-1}\geq 2.$ Applying the Hausdorff-Young  inequality \eqref{additive3.1'}, we obtain
\begin{eqnarray*}
F_{k,\beta}=\sum\limits_{n\in\Lambda_k}|\widehat{x}_k(n)|^{\beta} \overset{\text{(\ref{additive3.1'})}}{\leq} \|x_k\|^{\beta}_{\beta'}.    
\end{eqnarray*}
Applying Nikolskii's inequality from Theorem \ref{thm3.2} for $(L^{\beta'}(\mathbb{T}^d_\theta),L^{p}(\mathbb{T}^d_\theta))$ with respect to $\beta<p',$ or equivalently, $p<\beta',$ we obtain
\begin{eqnarray*}
F_{k,\beta}\leq\|x_k\|^{\beta}_{\beta'}\leq 2^{kd\beta (\frac{1}{p}-\frac{1}{\beta'})}\|x_k\|^{\beta}_{p}=2^{kd\beta (\frac{1}{\beta}-\frac{1}{p'})}\|x_k\|^{\beta}_{p}.    
\end{eqnarray*}
Therefore,
\begin{eqnarray*}
\|x\|^\beta_{A^{\beta}(\mathbb{T}_{\theta}^d)}=\inf\left(\sum_{k=0}^{\infty}F_{k,\beta}\right)
\leq\inf\left(\sum_{k=0}^{\infty}2^{kd\beta (\frac{1}{\beta}-\frac{1}{p'})}\|x_{k}\|^{\beta}_{p}\right)
:=\|x\|^\beta_{B_{p,\beta}^{sd}(\mathbb{T}_{\theta}^d)},
\end{eqnarray*}
where $s=(\frac{1}{\beta}-\frac{1}{p'})$.

Now, let $\beta\equiv(s+\frac{1}{p'})^{-1}<2.$ Set $\kappa:=\frac{p'}{p'-\beta}>1$ and $\delta:=\frac{\beta}{p'}-\frac{\beta}{2}.$ Then we have 
$$\frac{1}{\kappa}+\delta=1-\frac{\beta}{2}=\beta(\frac{1}{\beta}-\frac{1}{2}).$$
Hence, by the H\"{o}lder inequality we have
\begin{eqnarray*}
F_{k,\beta}&=&\sum\limits_{n\in\Lambda_k}|\widehat{x}_k(n)|^{\beta}\leq \Big(\sum\limits_{n\in\Lambda_k}1^{\kappa}\Big)^{1/\kappa} \Big(\sum\limits_{n\in\Lambda_k}|\widehat{x}_k(n)|^{\beta \kappa'}\Big)^{1/\kappa'}\\
&\lesssim& 2^{k\frac{d}{\kappa}}\Big(\sum\limits_{n\in\Lambda_k}|\widehat{x}_k(n)|^{\beta \kappa'}\Big)^{1/\kappa'}=2^{kd\frac{p'-\beta}{p'}}\Big(\sum\limits_{n\in\Lambda_k}|\widehat{x}_k(n)|^{p'}\Big)^{\beta/p'}.    
\end{eqnarray*}
Since $p'\geq 2,$ it follows from the Hausdorff-Young  inequality \eqref{additive3.1'} that
\begin{eqnarray*}
F_{k,\beta}&\lesssim&2^{kd\frac{p'-\beta}{p'}}\big(\sum\limits_{n\in\Lambda_k}|\widehat{x}_k(n)|^{p'}\big)^{\beta/p'} \overset{\text{(\ref{additive3.1'})}}{\lesssim} 2^{kd\frac{p'-\beta}{p'}}\|x_k\|^{\beta}_{p}.    
\end{eqnarray*}
In other words, we have
\begin{eqnarray*}\|x\|^\beta_{A^{\beta}(\mathbb{T}_{\theta}^d)}&=&\inf\left(\sum_{k=0}^{\infty}F_{k,\beta}\right)\leq \inf\left(\sum_{k=1}^{\infty}2^{kd\frac{p'-\beta}{p'}}\|x_k\|^{\beta}_{p}\right)\\
&=&\inf\left(\sum_{k=0}^{\infty}2^{kd\beta (\frac{1}{\beta}-\frac{1}{p'})}\|x_k\|^\beta_{p}\right):=\|x\|^\beta_{B_{p,\beta}^{sd}(\mathbb{T}_{\theta}^d)}  \end{eqnarray*}
for all $x\in B_{p,\beta}^{sd}(\mathbb{T}_{\theta}^d),$ where $s=(\frac{1}{\beta}-\frac{1}{p'})$.
\end{proof}

\begin{rem} 
(i) In the special case, when $\theta=0$ and $d=1,$ Theorem \ref{beta-wiener-space}  gives the result due to Szasz \cite[p. 119]{Peetre}.

(ii) In the case $p=2,$ we obtain the strongest result which follows from the combination of Theorem \ref{beta-wiener-space} and Theorem \ref{embed_Besov} (iii), that is,
$$\|x\|_{A^{\beta}(\mathbb{T}^{d}_{\theta})}\lesssim\|x\|_{B^{s^* d}_{2,\beta}(\mathbb{T}^{d}_{\theta})}\lesssim\|x\|_{B^{s d}_{p,\beta}(\mathbb{T}^{d}_{\theta})},$$
where $s^*, s>0$ and $\beta=(s^*+\frac{1}{2})^{-1}=(s+\frac{1}{p'})^{-1}.$
\end{rem}
The following is the converse result to Theorem \ref{beta-wiener-space}.
\begin{thm}\label{conv-beta-wiener-space} Let $2\leq p<\infty.$ Then
 $$\|x\|_{B_{p,\beta}^{\Tilde{s} d}(\mathbb{T}_{\theta}^d)}\lesssim\|x\|_{A^{\beta}(\mathbb{T}_{\theta}^d)}, \quad x\in A^{\beta}(\mathbb{T}_{\theta}^d),$$
for $\Tilde{s}:=\min\{s,0\}$ and $\beta=(s+\frac{1}{p'})^{-1}>0.$   
\end{thm}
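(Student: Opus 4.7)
The plan is to fix an admissible representation $x=\sum_{k\geq 0}x_k$ with $x_k\in T^{\Lambda_k}$, and prove the pointwise (representation-by-representation) inequality
\[
\sum_{k=0}^{\infty}2^{k\tilde{s}d\beta}\|x_k\|_p^{\beta}\lesssim \sum_{k=0}^{\infty}\sum_{n\in\Lambda_k}|\widehat{x}_k(n)|^{\beta},
\]
after which taking the infimum over admissible representations on both sides yields the claim. The starting point is the inverse Hausdorff--Young inequality of Theorem \ref{R-H-Y_ineq}, which applies because $2\leq p<\infty$ and gives, for every $k$,
\[
\|x_k\|_{L^p(\mathbb{T}^d_\theta)}\leq \Bigl(\sum_{n\in\Lambda_k}|\widehat{x}_k(n)|^{p'}\Bigr)^{1/p'}.
\]
The whole task is then to pass from the $\ell^{p'}$-norm to the $\ell^{\beta}$-norm on $\Lambda_k$, and the relation $\beta=(s+\tfrac{1}{p'})^{-1}$ rewrites as $\tfrac{1}{p'}-\tfrac{1}{\beta}=-s$, so the sign of $s$ decides the direction of the embedding.

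I would split into the two natural cases. If $s\geq 0$, then $\tilde{s}=0$ and $\beta\leq p'$; monotonicity of $\ell^r$-norms (valid for all $r>0$) gives
\[
\Bigl(\sum_{n\in\Lambda_k}|\widehat{x}_k(n)|^{p'}\Bigr)^{1/p'}\leq \Bigl(\sum_{n\in\Lambda_k}|\widehat{x}_k(n)|^{\beta}\Bigr)^{1/\beta},
\]
and the desired inequality follows on raising to the power $\beta$ and summing over $k$. If $s<0$, then $\tilde{s}=s$ and $\beta>p'$; applying H\"older's inequality on $\Lambda_k$ with exponents $\beta/p'$ and its conjugate,
\[
\Bigl(\sum_{n\in\Lambda_k}|\widehat{x}_k(n)|^{p'}\Bigr)^{1/p'}\leq |\Lambda_k|^{\frac{1}{p'}-\frac{1}{\beta}}\Bigl(\sum_{n\in\Lambda_k}|\widehat{x}_k(n)|^{\beta}\Bigr)^{1/\beta}=|\Lambda_k|^{-s}\Bigl(\sum_{n\in\Lambda_k}|\widehat{x}_k(n)|^{\beta}\Bigr)^{1/\beta}.
\]
Since $|\Lambda_k|\lesssim 2^{kd}$ by definition \eqref{finite-sets} and $-s>0$, we get $|\Lambda_k|^{-s}\lesssim 2^{-ksd}$, which after raising to the power $\beta$ produces exactly the factor $2^{-k\tilde{s}d\beta}=2^{-ksd\beta}$ that absorbs the weight on the Besov side.

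Combining the two cases, for every admissible representation we obtain
\[
\sum_{k=0}^{\infty}2^{k\tilde{s}d\beta}\|x_k\|_p^{\beta}\lesssim \sum_{k=0}^{\infty}\sum_{n\in\Lambda_k}|\widehat{x}_k(n)|^{\beta}.
\]
Passing to the infimum over admissible representations on the right, and using that the left-hand side dominates $\|x\|_{B_{p,\beta}^{\tilde{s}d}(\mathbb{T}_\theta^d)}^{\beta}$ (its own infimum over such representations), produces the asserted estimate $\|x\|_{B_{p,\beta}^{\tilde{s}d}}\lesssim \|x\|_{A^{\beta}}$ after taking $\beta$-th roots. The main (very minor) obstacle is the bookkeeping of indices to verify that the exponent $\tfrac{1}{p'}-\tfrac{1}{\beta}$ in the H\"older step equals precisely $-s$, which is what makes the factor $|\Lambda_k|^{-s}\lesssim 2^{-ksd}$ match the Besov weight exactly; once that is checked, the two cases collapse into the single statement with $\tilde{s}=\min\{s,0\}$.
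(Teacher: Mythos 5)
Your proposal is correct and follows essentially the same route as the paper's own proof: the inverse Hausdorff--Young inequality (Theorem \ref{R-H-Y_ineq}) applied to each block $x_k$, followed by $\ell^{\beta}\hookrightarrow\ell^{p'}$ monotonicity when $s\geq 0$ and H\"older on $\Lambda_k$ with $|\Lambda_k|\lesssim 2^{kd}$ when $s<0$, with the exponent bookkeeping $s\beta+\frac{\beta}{p'}-1=0$ matching the paper's computation exactly. The final passage to the infimum over admissible representations is also handled the same way.
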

\begin{proof}Let $\beta\leq p'$ (or equivalently $s\geq 0$). In this case $\Tilde{s}=0.$ Since $p\geq 2,$ applying the Hausdorff-Young inequality \eqref{additive3.1'} we obtain 
\begin{eqnarray*}\|x\|^\beta_{B_{p,\beta}^{0}(\mathbb{T}_{\theta}^d)}&=& \inf\left(\sum_{k=0}^{\infty}\|x_k\|^{\beta}_{p} \right)\overset{\text{(\ref{additive3.1'})}} {\leq}\inf\left(\sum_{k=0}^{\infty}\Big(\sum\limits_{n\in\Lambda_k}|\widehat{x}_k(n)|^{p'}\Big)^{\beta/p'}\right) \\
&\leq&\inf\left(\sum_{k=0}^{\infty}\sum\limits_{n\in\Lambda_k}|\widehat{x}_k(n)|^{\beta}\right):=\|x\|^\beta_{A^{\beta}(\mathbb{T}_{\theta}^d)}, \quad x\in A^{\beta}(\mathbb{T}_{\theta}^d), 
\end{eqnarray*}
where in the third line we have used the inequality in \cite[Exercise 1.1.4 (a), p. 11]{G2008} for $0<\frac{\beta}{p'}\leq 1.$
Let now $\beta>p'$ (or equivalently $\beta'<p$). In this case $\Tilde{s}=s<0.$ By using the H\"{o}lder inequality with respect to $\frac{\beta}{\beta-p'},$ we have
\begin{eqnarray*}
\sum\limits_{n\in\Lambda_k}|\widehat{x}_k(n)|^{p'}&\leq& \Big(\sum\limits_{n\in\Lambda_k}1^{\frac{\beta}{\beta-p'}}\Big)^{1-\frac{p'}{\beta}} \Big(\sum\limits_{n\in\Lambda_k}(|\widehat{x}_k(n)|^{p'})^{\frac{\beta}{p'}}\Big)^{p'/\beta}\\
&\lesssim& 2^{kd(1-\frac{p'}{\beta})}\Big(\sum\limits_{n\in\Lambda_k}|\widehat{x}_k(n)|^{\beta }\Big)^{p'/\beta}.    
\end{eqnarray*}
Hence, by this and the Hausdorff-Young inequality \eqref{additive3.1} we obtain
\begin{eqnarray*}\|x\|^\beta_{B_{p,\beta}^{sd}(\mathbb{T}_{\theta}^d)}&=& \inf\left(\sum_{k=0}^{\infty}2^{ksd\beta}\|x_k\|^{\beta}_{p}\right)\\
&\overset{\text{(\ref{additive3.1'})}} {\leq}&\inf\left(\sum_{k=0}^{\infty}2^{ksd\beta}\Big(\sum\limits_{n\in\Lambda_k}|\widehat{x}_k(n)|^{p'}\Big)^{\beta/p'}\right)\\
&\lesssim&\inf\left(\sum_{k=0}^{\infty}2^{ksd\beta}\Big(2^{kd(1-\frac{p'}{\beta})}\Big(\sum\limits_{n\in\Lambda_k}|\widehat{x}_k(n)|^{\beta }\Big)^{p'/\beta}\Big)^{\beta/p'} \right)\\
&=&\inf\left(\sum_{k=0}^{\infty}2^{ksd\beta}2^{kd(\frac{\beta}{p'}-1)}\sum\limits_{n\in\Lambda_k}|\widehat{x}_k(n)|^{\beta } \right)\\
&=&\inf\left(\sum_{k=0}^{\infty}\sum\limits_{n\in\Lambda_k}|\widehat{x}_k(n)|^{\beta} \right)\\
&:=&\|x\|^{\beta}_{A^{\beta}(\mathbb{T}_{\theta}^d)}, \quad x\in A^{\beta}(\mathbb{T}_{\theta}^d), 
\end{eqnarray*}
where we used the fact $s\beta+\frac{\beta}{p'}-1=0$ in the fourth line. 
\end{proof}
\begin{rem} 
(i) If $\beta\geq 2,$ then we have the strongest result when $p=2,$ that is,
$$\|x\|_{B^{sd}_{p,\beta}(\mathbb{T}^{d}_{\theta})}\lesssim\|x\|_{B^{s^* d}_{2,\beta}(\mathbb{T}^{d}_{\theta})}\lesssim\|x\|_{A^{\beta}(\mathbb{T}^{d}_{\theta})},$$
where  $\beta=(s^*+\frac{1}{2})^{-1}=(s+\frac{1}{p'})^{-1}.$ Note that in this case $s^*, s<0.$ In other words, $\Tilde{s}=s$ and $\Tilde{s^*}=s^*.$

(ii) If $\beta\geq 2,$ then we have the strongest result when $p=\beta',$ that is,
$$\|x\|_{B^{sd}_{p,\beta}(\mathbb{T}^{d}_{\theta})}\lesssim\|x\|_{B^{0}_{\beta',\beta}(\mathbb{T}^{d}_{\theta})}\lesssim\|x\|_{A^{\beta}(\mathbb{T}^{d}_{\theta})},$$
where parameters are as in the previous case.

In both cases the result follows from the combination of Theorem \ref{conv-beta-wiener-space} and Theorem \ref{embed_Besov} (iii).
\end{rem}

\section{Beurling and $\beta$-Beurling spaces on quantum tori}
In this section, we define Beurling and $\beta$-Beurling spaces on quantum tori and study their embedding properties. In the classical case ($\theta=0$ and $d=1$), these spaces were introduced by Beurling \cite{Beur} (see also \cite{BLT} for the algebraic properties), and \cite{NRT} for homogeneous manifolds.

\begin{definition} \label{def-Ber_space}Define the Beurling space on non-commutative torus as follows:
\begin{eqnarray*}
A^*(\mathbb{T}^d_\theta)&=&\{x\in \mathcal{D}'(\mathbb{T}^d_\theta):  \exists 
\{x_k\}_{k=0}^{\infty}\subset C^{\infty}(\mathbb{T}^d_{\theta}) \,\ \text{with}\,\ x_k\in T^{\Lambda_k},k\in\mathbb{Z}_+ \,\ \text{such that},\\
&x\overset{\mathcal{D}'}{=}&\sum_{k=0}^{\infty} x_k \,\ \text{and}\,\ \|x\|_{A^*(\mathbb{T}^d_\theta)}=\inf\left(\sum_{k=0}^{\infty} 2^{kd}\sup\limits_{n\in\Lambda_k}|\widehat{x}_k(n)|\right)<\infty\},    \end{eqnarray*} 
where the infimum is taken over all admissible representations of such $\{x_k\}_{k=0}^{\infty}.$
\end{definition}
Similar to the previous section, for any $0<\beta<\infty$ define $\beta$-version of this space as follows
\begin{eqnarray*}
A^{*,\beta}(\mathbb{T}^d_\theta)=\{x\in \mathcal{D}'(\mathbb{T}^d_\theta):  \|x\|_{A^{*,\beta}(\mathbb{T}^d_\theta)}:=\inf \Big(|\widehat{x}_0(0)|^{\beta}+\sum_{k=1}^{\infty}2^{kd}\Big(\sup\limits_{2^{k-1} < |n|}|\widehat{x}_k(n)|\Big)^{\beta}\Big)^{1/\beta}<\infty\}.    
\end{eqnarray*}
In this case, for any $0<\beta<\infty$ we have the following expression of the norm 
\begin{eqnarray}\label{equiv-beurling-norm}
 \|x\|^{\beta}_{A^{*,\beta}(\mathbb{T}^d_\theta)}&:=& \inf\left(|\widehat{x}_{0}(0)|^{\beta}+\sum_{k=1}^{\infty}2^{kd}\Big(\sup\limits_{2^{k-1} < |n|}|\widehat{x}_k(n)|\Big)^{\beta}\right)\nonumber\\
 &\asymp&\inf\left(|\widehat{x}_{0}(0)|^{\beta}+ \sum_{k=1}^{\infty}2^{kd}\Big(\sup\limits_{n\in\Lambda_k}|\widehat{x}_k(n)|\Big)^{\beta}\right).    
\end{eqnarray}
Indeed, the inequality $ ``\geq"$ is trivial. Let us prove the converse inequality. We write
\begin{eqnarray*}
 |\widehat{x}_{0}(0)|^{\beta}+\sum_{k=1}^{\infty}2^{kd}\Big(\sup\limits_{2^{k-1}< |n|}|\widehat{x}_k(n)|\Big)^{\beta}
 &\asymp& |\widehat{x}_{0}(0)|^{\beta}+\sum_{k=1}^{\infty}2^{kd}\sup\limits_{k\leq l}\sup\limits_{n\in\Lambda_l}|\widehat{x}_l(n)|^{\beta}\\
 &\lesssim&|\widehat{x}_{0}(0)|^{\beta}+\sum_{k=1}^{\infty}2^{kd}\sum\limits_{l=k}^{\infty}\sup\limits_{n\in\Lambda_l}|\widehat{x}_l(n)|^{\beta}\\
  &=&|\widehat{x}_{0}(0)|^{\beta}+\sum_{l=1}^{\infty}\sup\limits_{n\in\Lambda_l}|\widehat{x}_l(n)|^{\beta}\sum\limits_{k=1}^{l}2^{kd}\\ &\asymp&|\widehat{x}_{0}(0)|^{\beta}+\sum_{l=1}^{\infty}2^{ld}\sup\limits_{n\in\Lambda_l}|\widehat{x}_{l}(n)|^{\beta},    
\end{eqnarray*}
which is the desired result. So, we can work with either of equivalent expressions in \eqref{equiv-beurling-norm}.
The following result shows embedding properties between Beurling and Besov spaces.
\begin{thm}\label{beurling-space} Let $0<\beta<\infty$ and $p\geq2.$ Then we have
 $$\|x\|_{B_{p,\beta}^{d(\frac{1}{\beta}-\frac{1}{p'})}(\mathbb{T}_{\theta}^d)}\lesssim\|x\|_{A^{*,\beta}(\mathbb{T}_{\theta}^d)}, \quad x\in A^{*,\beta}(\mathbb{T}_{\theta}^d).$$
\end{thm}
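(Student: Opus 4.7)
The plan is to pass from the Besov norm down to Fourier coefficient data through the reverse Hausdorff--Young inequality (Theorem \ref{R-H-Y_ineq}, which applies precisely because $p\geq 2$), and then dominate the resulting $\ell^{p'}$-norm on the dyadic corridor $\Lambda_k$ by the $\ell^\infty$-norm times the cardinality $\#\Lambda_k\lesssim 2^{kd}$. A direct exponent check will show that the resulting smoothness index $sd$ with $s=\frac{1}{\beta}-\frac{1}{p'}$ is exactly what makes the two sides match.

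In more detail, I would fix an arbitrary admissible representation $x=\sum_{k=0}^\infty x_k$ with $x_k\in T^{\Lambda_k}$. For $k=0$ we simply have $x_0=\widehat{x}_0(0)$, so $\|x_0\|_p=|\widehat{x}_0(0)|$. For $k\geq 1$, since $p\geq 2$, Theorem \ref{R-H-Y_ineq} yields
\begin{equation*}
\|x_k\|_{L^p(\mathbb{T}^d_\theta)}\leq \Big(\sum_{n\in\Lambda_k}|\widehat{x}_k(n)|^{p'}\Big)^{1/p'}\leq (\#\Lambda_k)^{1/p'}\sup_{n\in\Lambda_k}|\widehat{x}_k(n)|\lesssim 2^{kd/p'}\sup_{n\in\Lambda_k}|\widehat{x}_k(n)|,
\end{equation*}
using that the dyadic corridor $\Lambda_k$ contains at most $\lesssim 2^{kd}$ lattice points. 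Raising to the $\beta$-th power and multiplying by the Besov weight $2^{ksd\beta}$ with $s=\frac{1}{\beta}-\frac{1}{p'}$, the exponents telescope:
\begin{equation*}
2^{ksd\beta}\|x_k\|_{L^p(\mathbb{T}^d_\theta)}^\beta\lesssim 2^{kd\beta(s+1/p')}\Big(\sup_{n\in\Lambda_k}|\widehat{x}_k(n)|\Big)^\beta = 2^{kd}\Big(\sup_{n\in\Lambda_k}|\widehat{x}_k(n)|\Big)^\beta,
\end{equation*}
which is precisely the general term of the (equivalent) Beurling expression in \eqref{equiv-beurling-norm}.

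To conclude, I would sum in $k$ and then take the infimum over admissible representations. Because the two norms are defined by an infimum over the \emph{same} set of decompositions $\{x_k\}_{k\geq 0}$, the pointwise-in-representation bound
\begin{equation*}
\sum_{k=0}^\infty 2^{ksd\beta}\|x_k\|_{L^p(\mathbb{T}^d_\theta)}^\beta\lesssim |\widehat{x}_0(0)|^\beta+\sum_{k=1}^\infty 2^{kd}\Big(\sup_{n\in\Lambda_k}|\widehat{x}_k(n)|\Big)^\beta
\end{equation*}
immediately upgrades to $\|x\|_{B_{p,\beta}^{sd}(\mathbb{T}^d_\theta)}^\beta\lesssim \|x\|_{A^{*,\beta}(\mathbb{T}^d_\theta)}^\beta$ after infimizing the right-hand side. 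There is no serious obstacle here; the only thing to be careful about is the bookkeeping of the $k=0$ term (which requires no Hausdorff--Young at all) and matching exactly the equivalent form \eqref{equiv-beurling-norm} of the Beurling norm rather than the original definition with $\sup_{2^{k-1}<|n|}$, which the authors have already shown to be equivalent.
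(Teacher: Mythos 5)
Your proposal is correct and follows essentially the same route as the paper: the reverse Hausdorff--Young inequality (Theorem \ref{R-H-Y_ineq}, valid since $p\geq 2$) to pass from $\|x_k\|_p$ to the $\ell^{p'}$-norm of the coefficients on $\Lambda_k$, the bound $\#\Lambda_k\lesssim 2^{kd}$, the same telescoping of exponents, and the identification with the equivalent form \eqref{equiv-beurling-norm} of the Beurling norm. Your explicit attention to the $k=0$ term and the infimum over common decompositions is a slightly more careful write-up of exactly the argument the authors give.
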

\begin{proof}By the Hausdorff-Young inequality \eqref{additive3.1} we have
\begin{eqnarray*}\|x\|_{B_{p,\beta}^{d(\frac{1}{\beta}-\frac{1}{p'})}(\mathbb{T}_{\theta}^d)}&=& \inf\Big(|\widehat{x}_{0}(0)|^{\beta}+\sum_{k=1}^{\infty}2^{kd(1-\frac{\beta}{p'})}\Big\|\sum\limits_{n\in\Lambda_k}\widehat{x}_k(n)e_{n}^{\theta}\Big\|^{\beta}_{p}\Big)^{1/\beta}\\
&\leq&\inf\Big[|\widehat{x}_{0}(0)|^{\beta}+\sum_{k=1}^{\infty}2^{kd(1-\frac{\beta}{p'})}\Big(\sum\limits_{n\in\Lambda_k}|\widehat{x}_k(n)|^{p'}\Big)^{\beta/p'}\Big]^{1/\beta}\\
&\lesssim&\inf\Big[|\widehat{x}_{0}(0)|^{\beta}+\sum_{k=1}^{\infty}2^{kd(1-\frac{\beta}{p'})}\Big(\sum\limits_{n\in\Lambda_k}1\Big)^{\beta/p'}\Big(\sup\limits_{n\in\Lambda_k}|\widehat{x}_k(n)|\Big)^{\beta}\Big]^{1/\beta}\\
&\lesssim&\inf\Big[|\widehat{x}_{0}(0)|^{\beta}+\sum_{k=1}^{\infty}2^{kd}\Big(\sup\limits_{n\in\Lambda_k}|\widehat{x}_k(n)|\Big)^{\beta}\Big]^{1/\beta}\\
&:=&\|x\|_{A^{*,\beta}(\mathbb{T}_{\theta}^d)}, \quad x\in A^{*,\beta}(\mathbb{T}_{\theta}^d), 
\end{eqnarray*}
which concludes the proof.
\end{proof}
In the case $p=2,$ we obtain the following result which follows from Theorems \ref{beta-wiener-space} and \ref{beurling-space}.
\begin{cor}Let $0<\beta<\infty.$ Then
 $$\|x\|_{A^{\beta}(\mathbb{T}_{\theta}^d)}\lesssim\|x\|_{B_{2,\beta}^{d(\frac{1}{\beta}-\frac{1}{2})}(\mathbb{T}_{\theta}^d)}\lesssim\|x\|_{A^{*,\beta}(\mathbb{T}_{\theta}^d)}, \quad x\in A^{*,\beta}(\mathbb{T}_{\theta}^d).$$
 In particular, if $\beta=1,$ then
 $$\|x\|_{A(\mathbb{T}_{\theta}^d)}\lesssim\|x\|_{B_{2,\beta}^{d/2}(\mathbb{T}_{\theta}^d)}\lesssim\|x\|_{A^{*}(\mathbb{T}_{\theta}^d)}, \quad x\in A^{*}(\mathbb{T}_{\theta}^d).$$
\end{cor}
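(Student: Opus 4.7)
The plan is to obtain the stated chain of inequalities as an immediate concatenation of Theorem~\ref{beta-wiener-space} and Theorem~\ref{beurling-space}, both applied at $p=2$. The key observation is that $p=2$ is exactly the symmetric choice at which the Besov smoothness indices appearing in the two theorems coincide; this is what allows the two estimates to be chained through a single Besov space on the nose, rather than merely through a further embedding.

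First, I would specialise Theorem~\ref{beta-wiener-space} to $p=2$, so that $p'=2$ and the parameter relation $\beta=(s+\tfrac{1}{p'})^{-1}$ reduces to $s=\tfrac{1}{\beta}-\tfrac{1}{2}$. Substituting $sd=d\bigl(\tfrac{1}{\beta}-\tfrac{1}{2}\bigr)$ then yields the left-hand embedding
$$\|x\|_{A^{\beta}(\mathbb{T}_{\theta}^d)}\lesssim\|x\|_{B_{2,\beta}^{d(\frac{1}{\beta}-\frac{1}{2})}(\mathbb{T}_{\theta}^d)}.$$
Next, I would apply Theorem~\ref{beurling-space} with $p=2$; since $p'=2$, its Besov index $d\bigl(\tfrac{1}{\beta}-\tfrac{1}{p'}\bigr)$ is again $d\bigl(\tfrac{1}{\beta}-\tfrac{1}{2}\bigr)$, exactly matching the target space from the previous step, giving
$$\|x\|_{B_{2,\beta}^{d(\frac{1}{\beta}-\frac{1}{2})}(\mathbb{T}_{\theta}^d)}\lesssim\|x\|_{A^{*,\beta}(\mathbb{T}_{\theta}^d)}.$$
Composing the two estimates produces the first assertion for all $0<\beta<\infty$. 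The stated $\beta=1$ version is then just the obvious specialisation: one has $\tfrac{1}{\beta}-\tfrac{1}{2}=\tfrac{1}{2}$, together with $A^{1}=A$ and $A^{*,1}=A^{*}$, so the Besov index collapses to $d/2$ and the chain reduces to the claimed statement.

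There is no genuine obstacle here; the corollary is an essentially formal consequence of the two preceding theorems. The only point worth checking is that $p=2$ is admissible in both theorems simultaneously (it is, since Theorem~\ref{beta-wiener-space} requires $1<p\leq 2$ and Theorem~\ref{beurling-space} requires $p\geq 2$) and that the resulting Besov smoothness indices match, which they do precisely because $p=p'=2$.
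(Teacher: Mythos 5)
Your proposal is correct and is exactly the paper's argument: the corollary is obtained by specialising Theorem~\ref{beta-wiener-space} and Theorem~\ref{beurling-space} to $p=2$, where $p'=2$ makes the two Besov smoothness indices coincide at $d(\tfrac{1}{\beta}-\tfrac{1}{2})$, and then setting $\beta=1$ for the second display. Nothing further is needed.
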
 

\section{Interpolation}\label{inter}

Let $X_0$, $X_1$ be a couple of quasi-normed spaces continuously embedded into a topological vector space $\mathcal{A}.$ For any $x\in X_0+X_1$ we define the $K$-functional by
$$
K(x, t; X_0, X_1) := \inf\limits_{
x=z+y}\Big(\|z\|_{X_0} + t\|y\|_{X_1}
\Big), \quad t \geq 0,
$$
where $z\in X_0$ and $y\in X_1.$
For every fixed $t>0$ this defines a quasi-norm on $X_0+X_1$ and $K(x, t; X_0, X_1)$ is a non-negative, increasing and concave function of $t>0,$ which follows from the definition. For more details, see \cite{Holm},\cite{Peetre}, and \cite[Chapter 6]{DL} (see also \cite{BL1976} for normed spaces).
To study intermediate space $X$ for the pair $(X_0, X_1)$, that is, $X_0 \cap X_1 \subset X \subset
X_0 + X_1,$  one uses the $\eta,q$-interpolation spaces.  Let $0 < \eta < 1$ and $0<q\leq \infty.$ We define
$$
(X_0, X_1)_{\eta,q}:=\Big\{ x\in X_0+X_1: \int\limits_0^\infty\big(t^{-\eta}K(x,t)\big)^q\frac{dt}{t}<\infty\Big\},
$$
for $q < \infty;$
$$
(X_0, X_1)_{\eta,\infty}:=\{ x\in X_0+X_1: \|x\|_{(X_0, X_1)_{\eta,\infty}}:=\sup\limits_{0<t<\infty}t^{-\eta}K(x,t) <\infty\},
$$
for $q = \infty.$ Then this space is quasi-normed with 
the quasi-norm
$$\|x\|_{(X_0, X_1)_{\eta,q}}:=\Big(\int\limits_0^\infty\big(t^{-\eta}K(x,t)\big)^q\frac{dt}{t}\Big)^{\frac{1}{q}}.
$$
Moreover, it is an interpolation space (see \cite[Theorem 1.1]{Holm},\cite[Chapter 6]{DL}).

\begin{definition} \label{def-Ber-type-space} We define the following Beurling-type spaces on quantum tori:
\begin{eqnarray*}
A_r^{*,\beta}(\mathbb{T}^d_\theta)&=&\{x\in \mathcal{D}'(\mathbb{T}^d_\theta):  \exists 
\{x_k\}_{k=0}^{\infty}\subset C^{\infty}(\mathbb{T}^d_{\theta}) \,\ \text{with}\,\ x_k\in T^{\Lambda_k},k\in\mathbb{Z}_+ \,\ \text{such that}\\
&x\overset{\mathcal{D}'}{=}&\sum_{k=0}^{\infty} x_k \,\ \text{and}\,\ \|x\|_{A_r^{*,\beta}(\mathbb{T}^d_\theta)}<\infty\},    \end{eqnarray*} 
 where
\begin{eqnarray}\label{def_beru_type}
\|x\|_{A_r^{*,\beta}(\mathbb{T}^d_\theta)}:=\inf\Big(|\widehat{x}_0(0)|^\beta+\sum_{k=1}^{\infty}\big(2^{rkd}\sup\limits_{2^{k-1}< |n| }|\widehat{x}_k(n)|\big)^\beta\Big)^{\frac{1}{\beta}},
\end{eqnarray}
and the infimum is taken over all admissible representations of such $\{x_k\}_{k=0}^{\infty}.$
\end{definition} 
Note that $A_{\frac{1}{\beta}}^{*,\beta}(\mathbb{T}^d_\theta)=A^{*,\beta}(\mathbb{T}^d_\theta),$ where $A^{*,\beta}(\mathbb{T}^d_\theta)$ is defined in Section \ref{Wiener-spaces}. 
\begin{rem}It can be proved that $\beta$-Wiener, $\beta$-Beurling, and the space in Definition \ref{def-Ber-type-space} are quasi-normed spaces. Since we do not use their completeness elsewhere, we do not try to prove that. But, we conjecture that these spaces are indeed quasi-Banach. Although we give the definition of interpolation spaces for general quasi-Banach spaces, in the theorem below we use only their quasi-norms, but not their completeness.   
    \end{rem}
\begin{thm} \label{intepolation_Bru} Let $0 < r_1 < r_0 < \infty,$ $ 0 < \beta_0, \beta_1, q \leq\infty,$  and $r=(1-\eta)r_0+\eta r_1$. 
\begin{enumerate}[{\rm(i)}]
    \item We have 
    $$
    (A_r^{*,\beta_0}(\mathbb{T}^d_\theta), A_r^{*,\beta_1}(\mathbb{T}^d_\theta))_{\eta, q} =A_r^{*,q}(\mathbb{T}^d_\theta).
    $$   
    In particular,   
    $$
    (A^{*,\frac{1}{r_0}}(\mathbb{T}^d_\theta), A^{*,\frac{1}{r_1}}(\mathbb{T}^d_\theta))_{\eta, \frac{1}{r}}=A^{*,\frac{1}{r}}(\mathbb{T}^d_\theta);
    $$    
    \item If $0<p\leq\infty.$ Then
    $$
    (B_{p,\beta_0}^{r_0}(\mathbb{T}^d_\theta), B_{p,\beta_1}^{r_1}(\mathbb{T}^d_\theta))_{\eta, q}=B_{p, q}^{r}(\mathbb{T}^d_\theta).
    $$
\end{enumerate}
\end{thm}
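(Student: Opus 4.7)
My strategy is to reduce both statements to the classical interpolation identities for weighted vector- and scalar-valued $\ell^q$-spaces, by realizing each of $B^{r}_{p,\beta}(\mathbb{T}^d_\theta)$ and $A^{*,\beta}_{r}(\mathbb{T}^d_\theta)$ as a retract of an appropriate weighted sequence space. The sequence-space ingredient is
$$
(\ell^{\beta_0}_{s_0}(X),\, \ell^{\beta_1}_{s_1}(X))_{\eta,q} \;=\; \ell^{q}_{s}(X), \qquad s = (1-\eta)s_0 + \eta s_1,
$$
valid for any compatible couple $X$ when $s_0 \ne s_1$ (here $X = L^p(\mathbb{T}^d_\theta)$ for (ii) and $X = \mathbb{C}$ for (i)), which follows from \cite[Theorem 5.6.1]{BL1976}; together with its same-weight version when $s_0 = s_1$, whose consistent reading with the particular case in (i) forces the relation $1/q = (1-\eta)/\beta_0 + \eta/\beta_1$, as confirmed by the specialization $\beta_j = 1/r_j$, $q = 1/r$. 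Coupling these with the functoriality of real interpolation under retracts then delivers both (i) and (ii).

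For (ii), I would introduce the summation map $S(\{x_k\}) := \sum_k x_k$ from the weighted vector-valued sequence space $\ell^{\beta}_r(L^p(\mathbb{T}^d_\theta))$, restricted to sequences with $x_k \in T^{\Lambda_k}$, onto $B^{r}_{p,\beta}(\mathbb{T}^d_\theta)$. By \eqref{def_norm_besov1}, $S$ is exactly a quotient map. To promote it to a retract, I would take disjoint dyadic annuli $\tilde\Lambda_k \subset \Lambda_k$ partitioning $\mathbb{Z}^d \setminus \{0\}$, together with smooth symbols $\varphi_k : \mathbb{Z}^d \to \mathbb{R}$ supported in $\Lambda_k$ and satisfying $\sum_k \varphi_k \equiv 1$, and set $J(x) := \{T_{\varphi_k} x\}_k$ using the multiplier in \eqref{additive2.9}. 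The main obstacle here is to secure the uniform $L^p(\mathbb{T}^d_\theta)$-boundedness of the Littlewood--Paley projections $T_{\varphi_k}$ over the full range $0 < p \leq \infty$ of the theorem: for $1 \leq p \leq \infty$ this is supplied by the Fourier multiplier theory on quantum tori from \cite{XXY}, while the quasi-Banach range $0 < p < 1$ would require atomic/maximal-function techniques. Once $S \circ J = \mathrm{id}$ is in place, the couple $(B^{r_0}_{p,\beta_0}, B^{r_1}_{p,\beta_1})$ becomes a retract of $(\ell^{\beta_0}_{r_0}(L^p), \ell^{\beta_1}_{r_1}(L^p))$, and the displayed sequence-space identity transfers to the Besov pair to yield (ii).

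For (i), the analogue is considerably simpler because \eqref{def_beru_type} already presents the Beurling-type norm as a weighted $\ell^\beta$-norm of the scalar sequence $\{\sup_{n \in \tilde\Lambda_k} |\widehat x(n)|\}_k$, up to equivalent quasi-norms (exactly in the spirit of \eqref{equiv-beurling-norm} and verified by the same argument used for the defining norm of $A^{*,\beta}$). No non-trivial multiplier boundedness is needed: the canonical dyadic decomposition $x_k := \sum_{n \in \tilde\Lambda_k} \widehat x(n) e^{\theta}_n$ furnishes a scalar retract onto $\mathbb{C} \oplus \ell^{\beta}_r(\mathbb{N})$ directly, and both the quasi-norm equivalence and the inverse (summation of polynomials) are elementary. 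Specializing the sequence-space identity with common weight $r$, under the constraint $1/q = (1-\eta)/\beta_0 + \eta/\beta_1$ described above, then yields (i); setting $\beta_0 = 1/r_0$, $\beta_1 = 1/r_1$, $q = 1/r$ recovers the particular case through the identity $A^{*,1/r}_r = A^{*,1/r}$ recorded just before the theorem.
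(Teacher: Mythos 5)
Your overall strategy---realizing these spaces as retracts/quotients of weighted sequence spaces and importing the classical identity $(\ell^{\beta_0}_{s_0}(X),\ell^{\beta_1}_{s_1}(X))_{\eta,q}=\ell^{q}_{s}(X)$---is a legitimate alternative to what the paper does: the paper proves (i) by a direct two-sided $K$-functional computation (pointwise bounds $2^{rkd}\sup_{n}|\widehat{x}_k(n)|\lesssim 2^{-\eta r'kd}K(x,2^{r'kd})$ with $r'=r_0-r_1$ for one inclusion, and an explicit frequency splitting at $2^{l}$ together with Hardy's inequality for the other), and then asserts that (ii) follows by the same scheme. However, there are two genuine problems with your version. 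First, the weights: the hypothesis $0<r_1<r_0$ and the paper's proof make clear that the intended left-hand side in (i) is $(A^{*,\beta_0}_{r_0}(\mathbb{T}^d_\theta),A^{*,\beta_1}_{r_1}(\mathbb{T}^d_\theta))_{\eta,q}$ with \emph{distinct} weights, for which the sequence-space identity holds for \emph{every} $q$ with no constraint relating $q$ to $\beta_0,\beta_1$. Your reading of the display as a same-weight statement, repaired by imposing $1/q=(1-\eta)/\beta_0+\eta/\beta_1$, cannot produce the ``in particular'' case: $A^{*,1/r_0}=A^{*,1/r_0}_{r_0}$ and $A^{*,1/r_1}=A^{*,1/r_1}_{r_1}$ carry the different weights $r_0\neq r_1$, so that case is an instance of the distinct-weight identity, not of the same-weight one. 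Your scalar-retract framework does support the distinct-weight version, so this is repairable, but as written the logic does not close.

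Second, and more seriously, part (ii): your co-retraction $J(x)=\{T_{\varphi_k}x\}_k$ requires the Littlewood--Paley multipliers to be uniformly bounded on $L^p(\mathbb{T}^d_\theta)$ over the full range $0<p\leq\infty$ claimed by the theorem. For $0<p<1$ this is false already on the classical torus: Fourier multipliers are not bounded on $L^p$ for $p<1$, and atomic/maximal techniques yield $H^p$ bounds, not $L^p$ bounds, so the retract you propose does not exist in the stated generality; you flag this obstacle but do not close it. The gap is avoidable precisely because $\|\cdot\|_{B^{r}_{p,\beta}}$ is an infimum over representations: the summation map $S$ is a quotient map that moreover lifts decompositions (given $x=z+y$, choose near-optimal representations of $z$ and of $y$ separately and add them termwise), so $K(x,t;B^{r_0}_{p,\beta_0},B^{r_1}_{p,\beta_1})$ is comparable to the infimum of $K(\cdot,t)$ over lifts in the sequence couple, and the interpolation identity transfers without any linear right inverse. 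Unwound, that is exactly the paper's direct argument, which never invokes multiplier boundedness on $L^p(\mathbb{T}^d_\theta)$.
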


\begin{proof} Let us prove (i). We assume $x \in (A_{r_0}^{*,\beta_0}(\mathbb{T}^d_\theta), A_{r_1}^{*,\beta_1}(\mathbb{T}^d_\theta))_{\eta, q}.$  Take any representation $x = z+y$ such that $z\in A_{r_0}^{*,\beta_0}(\mathbb{T}^d_\theta),$ and $y\in  A_{r_1}^{*,\beta_1}(\mathbb{T}^d_\theta).$  Then for any $k\geq1$,  we get
\begin{eqnarray}\label{11.2}
2^{rkd}\sup\limits_{2^{k-1}<|n|} |\widehat{x}_k(n)|&\leq&
\sup\limits_{2^{k-1}< |n|}\big[(2^{(r-r_0)kd}(2^{r_0kd} |\widehat{z}_k (n)|)   + 2^{(r-r_1)kd}(2^{r_1kd} |\widehat{y}_k(n)|)\big]\nonumber\\
&\lesssim&
2^{(r-r_0)kd}\big(|\widehat{z}_0(0)|^{\beta_0}+\big(2^{r_0kd}\sup\limits_{2^{k-1}<|n|}  |\widehat{z}_k(n)|\big)^{\beta_0}\big)^{\frac{1}{\beta_0}}\nonumber\\
&+&2^{(r-r_1)kd} \big(|\widehat{y}_0(0)|^{\beta_1}+\big(2^{r_1kd}\sup\limits_{2^{k-1}< |n|}|\widehat{y}_k(n)|\big)^{\beta_1}\big)^{\frac{1}{\beta_1}}\nonumber\\
&\leq&
2^{(r-r_0)kd}\Big[\Big(|\widehat{z}_0 (0)|^{\beta_0}+\sum\limits_{s=1}^\infty\big(2^{r_0sd}\sup\limits_{2^{s-1}<|n|}  |\widehat{z}_k(n)|\big)^{\beta_0}\Big)^{\frac{1}{\beta_0}}\nonumber\\
&+&2^{r'kd}\Big(|\widehat{y}_0 (0)|^{\beta_1}+\sum\limits_{s=1}^{\infty}\big(2^{r_1sd}\sup\limits_{2^{s-1}< |n|}|\widehat{y}_k(n)|\big)^{\beta_1}\Big)^{\frac{1}{\beta_1}}\Big]\nonumber\\
&\overset{\text{(\ref{def_beru_type})}}{=}&2^{(r-r_0)kd}\Big(\|z\|_{A_{r_0}^{*,\beta_0}(\mathbb{T}^d_\theta)}+2^{r'kd}\|y\|_{A_{r_1}^{*,\beta_1}(\mathbb{T}^d_\theta)}\Big) 
\end{eqnarray} 
and 
\begin{eqnarray}\label{11.3}
|\widehat{x}_0(0)|&\lesssim&
2^{(r-r_0)d}\big(|\widehat{z}_0(0)|^{\beta_0}+\big(2^{r_0d}\sup\limits_{2^{k-1}< |n|}  |\widehat{z}_k(n)|\big)^{\beta_0}\big)^{\frac{1}{\beta_0}}\nonumber\\
&+&2^{(r-r_1)d} \big(|\widehat{z}_0(0)|^{\beta_1}+\big(2^{r_1d}\sup\limits_{2^{k-1}<|n|}|\widehat{z}_k(n)|\big)^{\beta_1}\big)^{\frac{1}{\beta_1}}\nonumber\\
&\lesssim&2^{(r-r_0)d}\Big(\|z\|_{A_{r_0}^{*,\beta_0}(\mathbb{T}^d_\theta)}+2^{r'd}\|y\|_{A_{r_1}^{*,\beta_1}(\mathbb{T}^d_\theta)}\Big),
\end{eqnarray} 
where $0 < r_1 < r_0 < \infty,$ $ 0 < \beta_0, \beta_1 \leq\infty,$  and $r'=r_0-r_1,$ $r=(1-\eta)r_0+\eta r_1$.

Using \eqref{11.2} and  \eqref{11.3} and the definition $K(x, t) := K(x, t; A_{r_0}^{*,\beta_0}, A_{r_1}^{*,\beta_1}),$  we find 
\begin{eqnarray*}
2^{rdk}\sup\limits_{2^{k-1}<|n|} |\widehat{x}_k(n)|&\overset{\text{(\ref{11.2}) }}{\lesssim}&2^{(r-r_0)kd} K(x, 2^{r'kd})=2^{-\eta r'kd} K(x, 2^{r'kd}) 
\end{eqnarray*}
and 
\begin{eqnarray*}
|\widehat{x}_0(0)|&\overset{\text{(\ref{11.3}) }}{\lesssim}&
2^{(r-r_0)d} K(x, 2^{r'd})=2^{-\eta r'd} K(x, 2^{r'd}), 
\end{eqnarray*}
where $r'=r_1-r_0.$  Therefore, 
\begin{eqnarray*}
\|x\|_{A_{r}^{*,q}(\mathbb{T}^d_\theta)}&\overset{\text{(\ref{def_beru_type})}}{=}&\inf\Big(|\widehat{x}_0(0)|^q+\sum\limits_{k=1}^{\infty}\big(2^{rkd}\sup\limits_{2^{k-1}<|n|} |\widehat{x}_k(n)|\big)^q\Big)^{\frac{1}{q}}\\
&\lesssim&
\Big(\big(2^{-\eta r'd} K(x, 2^{(r_0-r_1)d})\big)^q+\sum\limits_{k=1}^\infty\big(2^{-\eta r'kd}K(x, 2^{r'kd})\big)^q\Big)^{\frac{1}{q}}\\
&\leq&
\Big(\sum\limits_{k=1}^\infty\big(2^{-\eta r'kd}K(x, 2^{r'kd})\big)^q+\sum\limits_{k=1}^\infty\big(2^{-\eta r'kd}K(x, 2^{r'kd})\big)^q\Big)^{\frac{1}{q}}\\
&=&2^{\frac{1}{q}}
\Big(\sum\limits_{k=1}^\infty\big(2^{-\eta r'kd}K(x, 2^{r'kd})\big)^q\Big)^{\frac{1}{q}}\\
&\asymp&\Big(\int\limits_{1}^\infty \big(t^{-\eta r'd}K(x, t^{r'd})\big)^q\frac{dt}{t}\Big)^{\frac{1}{q}}\\
&\leq&\Big(\int\limits_{0}^\infty \big(t^{-\eta r'd}K(x, t^{r'd})\big)^q\frac{dt}{t}\Big)^{\frac{1}{q}}\\
&\overset{\text{(\ref{def_beru_type})}}{=}&\|x\|_{\big(A_{r_0}^{*,\beta_0}, A_{r_1}^{*,\beta_1}\big)_{\eta, q}},
\end{eqnarray*}
which means that 
$
\big(A_{r_0}^{*,\beta_0}(\mathbb{T}^d_\theta), A_{r_1}^{*,\beta_1}(\mathbb{T}^d_\theta)\big)_{\eta, q} \hookrightarrow A_{r}^{*,q}(\mathbb{T}^d_\theta). 
$

Let us show the inverse embedding. Let $x \in  A_{r}^{*,q}(\mathbb{T}^d_\theta),$ $\tau = \min\{\beta_0, \beta_1, q\},$  $r \in\mathbb{Z}^+,$  and $r'=r_1-r_0.$  For $l\in\mathbb{N}$ we denote 
$$
z:=\sum\limits_{|n|\leq2^{l}}|\widehat{x}_k(n)|e_{n}^\theta, \quad  y:=x-z. 
$$

Thus, 
\begin{eqnarray}\label{9.4}
\|z\|_{A_{r_0}^{*,\beta_0}(\mathbb{T}^d_\theta)}&\overset{\text{(\ref{def_beru_type})}}{=}&\inf\Big(|\widehat{z}_0(0)|^{\beta_0}+\sum\limits_{k=1}^\infty\big(2^{r_0kd}\sup\limits_{2^{k-1}< |n|} |\widehat{z}_k(n)|\big)^{\beta_0}\Big)^{\frac{1}{\beta_0}}\nonumber\\
&\leq& \Big(|\widehat{z}_0(0)|^{\tau}+\sum\limits_{k=1}^\infty\big(2^{r_0kd}\sup\limits_{2^{k-1}< |n|} |\widehat{z}_k(n)|\big)^{\tau}\Big)^{\frac{1}{\tau}}\nonumber\\
&\leq& \Big(|\widehat{x}_0(0)|^{\tau}+\sum\limits_{k=1}^{l}\big(2^{r_0kd}\sup\limits_{2^{k-1}< |n|} |\widehat{x}_k(n)|\big)^{\tau}\Big)^{\frac{1}{\tau}}
\end{eqnarray}
 and
\begin{eqnarray}\label{9.5}
\|y\|_{A_{r_1}^{*,\beta_1}(\mathbb{T}^d_\theta)}
&\lesssim& 2^{r_1ld}\sup\limits_{2^{l}\leq|n|}|\widehat{x}_l(n)|+ \Big(|\widehat{y}_0(0)|^{\tau}+\sum\limits_{k=l+1}^{\infty}\big(2^{r_1kd}\sup\limits_{2^{k-1}\leq |n|} |\widehat{x}_k(n)|\big)^{\tau}\Big)^{\frac{1}{\tau}}\nonumber\\
&\lesssim&2^{r'ld}\Big(|\widehat{x}_0(0)|^{\tau}+\sum\limits_{k=0}^{l}\big(2^{r_0kd}\sup\limits_{2^{k-1}<|n|}|\widehat{x}_k(n)|\big)^{\tau}\Big)^{\frac{1}{\tau}}\nonumber\\&+& \Big(|\widehat{x}_0(0)|^{\tau}+\sum\limits_{k=l+1}^{\infty}\big(2^{r_1kd}\sup\limits_{2^{k-1}< |n|}|\widehat{x}_k(n)|\big)^{\tau}\Big)^{\frac{1}{\tau}}.
\end{eqnarray}

Since
$$
K(x, t^{r'd}) = \inf\limits_{x=z+y}\big(\|z\|_{A_{r_0}^{*,\beta_0}(\mathbb{T}^d_\theta)} +t^{r'd}\|y\|_{A_{r_1}^{*,\beta_1}(\mathbb{T}^d_\theta)}
) \leq t^{r'd} \|x\|_{A_{r_1}^{*,\beta_1}(\mathbb{T}^d_\theta)},  
$$
we have 
\begin{eqnarray}\label{9.6}
\|x\|_{\big(A_{r_0}^{*,\beta_0}, A_{r_1}^{*,\beta_1}\big)_{\eta, q}}&=&\Big(\int\limits_0^{\infty}\big(t^\eta K(x,t)\big)^q\frac{dt}{t}\Big)^{\frac{1}{q}}\nonumber\\
&=&\frac{1}{r'}\Big(\int\limits_0^{\infty}\big(t^{-\eta r'd}K(x,t^{r'd})\big)^q\frac{dt}{t}\Big)^{\frac{1}{q}}\nonumber\\
& \lesssim&\Big(\|x\|^q_{A_{r_1}^{*,\beta_1}(\mathbb{T}^d_\theta)}\int\limits_0^{1}\big(t^{-\eta r'd}t^{r'd}\big)^q\frac{dt}{t} + \int\limits_1^{\infty}(t^{-\eta r'd} K(x,t^{r'd}))^q\frac{dt}{t}\Big)^{\frac{1}{q}}\nonumber\\
&\asymp&\|x\|_{A_{r_1}^{*,\beta_1}(\mathbb{T}^d_\theta)}+\Big(\sum\limits_{s=0}^{\infty}\big(2^{-\eta r'sd}K(x,2^{ r'sd})\big)^q\Big)^{\frac{1}{q}}.
\end{eqnarray}
In view of $r_1<r,$ we obtain $\|x\|_{A_{r_1}^{*,\beta_1}(\mathbb{T}^d_\theta)}\lesssim\|x\|_{A_{r}^{*,\beta}(\mathbb{T}^d_\theta)}.$ Then, by 
$$
K(x,2^{r'sd})\leq \|z\|_{A_{r_0}^{*,\beta_0}(\mathbb{T}^d_\theta)}+2^{ r'sd}\|y\|_{A_{r_1}^{*,\beta_1}(\mathbb{T}^d_\theta)}
$$
and  formulas \eqref{9.4}-\eqref{9.6}, we infer 
\begin{eqnarray*}
\|x\|_{(A_{r_0}^{*,\beta_0},A_{r_1}^{*,\beta_1})_{\eta,q}} &\overset{\text{(\ref{9.6})}}{\lesssim}&
\|x\|_{A_{r_1}^{*,\beta_1}(\mathbb{T}^d_\theta)}+\inf\limits_{x=z+y}\Big( \sum\limits_{s=0}^{\infty}\big[2^{-\eta r'sd}\big(\|z\|_{A_{r_0}^{*,\beta_0}(\mathbb{T}^d_\theta)}+2^{ r'sd}\|y\|_{A_{r_1}^{*,\beta_1}(\mathbb{T}^d_\theta)}\big)\big]^q\Big)^{\frac{1}{q}}\\
&\overset{\text{(\ref{9.4})(\ref{9.5})}}{\lesssim}&\|x\|_{A_{r}^{*,\beta}(\mathbb{T}^d_\theta)}
+\inf\limits_{x=z+y}\left( \sum\limits_{k=0}^{\infty}\Big( 2^{-\eta r'qkd}2^{r'ld}\Big[|\widehat{x}_0(0)|^{\tau}\right. \\
&+&\sum\limits_{k=0}^{l}\big(2^{r_0kd}\sup\limits_{2^k<|n|} |\widehat{x}_k(n)|\big)^{\tau}\Big]^{\frac{1}{\tau}}\nonumber\\
&+& \left.\Big[ |\widehat{x}_0(0)|^{\tau}+\sum\limits_{k=l+1}^{\infty}\big(2^{r_1kd}\sup\limits_{2^k\leq |n|} |\widehat{x}_k(n)|\big)^{\tau}\Big]^{\frac{1}{\tau}}\Big)^q\right)^{\frac{1}{q}}.
\end{eqnarray*}
Further, since $\tau\leq q,$ applying Hardy's inequality (see \cite[Theorem 2 (iv)]{B}) we obtain 
$$
\|x\|_{(A_{r_0}^{*,\beta_0},A_{r_1}^{*,\beta_1})_{\eta,q}}\lesssim \|x\|_{A_{r}^{*,\beta}(\mathbb{T}^d_\theta)}.
$$
This completes the proof of (i). 
To show (ii), we first note that $$(B^{r_0}_{p,\beta_0}(\mathbb{T}^d_\theta), B^{r_1}_{p,\beta_1}(\mathbb{T}^d_\theta))_{\eta,q}\hookrightarrow B^{r}_{p,q}(\mathbb{T}^d_\theta)$$ can be proved similarly to the proof of (i) using the embedding 
$$B^{r}_{p,\beta}(\mathbb{T}^d_\theta)\hookrightarrow B^{r}_{p,\infty}(\mathbb{T}^d_\theta)$$ which follows from Theorem \ref{embed_Besov} (ii).
The reverse inequality is also proved similarly to (i) by using the Hardy's inequality \cite[Theorem 2 (iv)]{B}.
 \end{proof}
\begin{rem}Note that the proof of Theorem~\ref{intepolation_Bru}\,(ii) follows from \cite[Theorem~7.5, pp.~197--198]{DL}, 
where the interpolation result was established for the quasi-normed space 
$\ell_{r}^{\,q}(X)$, $r>0$, $0<q\leq\infty$, consisting of all sequences 
$a=(a_n)_{n\ge0}$ with $a_n\in X$, $n=0,1,\dots$, where $X$ is a quasi-normed complete space, 
such that the following quantity is finite:
\begin{equation}\label{7.18}
    \sum_{n=0}^{\infty} \|a_n\|_{X}^{q} < \infty, 
    \qquad 0 < q < \infty,
\end{equation}
(with the usual modification that the sum is replaced by the supremum when $q=\infty$).
\end{rem}

{\bf Conflict of interest.}
We can conceive of no conflict of
interest in the publication of this paper. 

{\bf Data availability.}
No new data were created or analysed during this study. Data sharing is not applicable to this article.

\section{Acknowledgements}
The work was partially supported by the grant No. AP23487088 of the Science Committee of the Ministry of Science and Higher
Education of the Republic of Kazakhstan.
Authors would like to thank to Prof. Fedor Sukochev, Prof. Turdebek Bekjan, and Dr. Dmitriy Zanin for their helpful discussions and comments on non-commutative torus. Authors also would like to thank to Dr. Gihyun Lee for his discussion on the non-commutative torus and for giving us some references including his survey paper \cite{Ponge1}, and to Dr. Vishvesh Kumar for showing us some references on $L^p$-$L^q$ boundedness of Fourier multipliers in various spaces and groups. The authors were partially supported by Odysseus and Methusalem grants (01M01021 (BOF Methusalem) and 3G0H9418 (FWO Odysseus)) from Ghent Analysis and PDE center at Ghent University. The first author was also supported by the EPSRC grants EP/R003025/2 and EP/V005529/1.
Authors thank the anonymous referees for reading the paper and providing thoughtful comments, which improved the exposition of the paper.

\begin{center}

\end{center}

\end{document}